\g@addto@macro\bfseries{\boldmath} %to get bold math in titles but not in the TOC
\setlist[enumerate,1]{label=\textup{(\alph*)}}
\newcommand{\str}{\operatorname{str-exp}}
\newcommand{\smo}{\operatorname{smo}}
\newcommand{\aconv}{\operatorname{aconv}}
\newcommand{\id}{\operatorname{Id}}
\newcommand{\K}{\mathbb{K}}
\newcommand{\N}{\mathbb{N}}
\newcommand{\T}{\mathbb{T}}
\newcommand{\R}{\mathbb{R}}
\newcommand{\C}{\mathbb{C}}
\newcommand{\Lin}{\mathcal{L}}
\newcommand{\ELL}{\mathcal{L}}
\newcommand{\DP}{\mathcal{DP}}
\newcommand{\KP}{\mathcal{K}}
\newcommand{\FR}{\mathcal{FR}}
\newcommand{\eps}{\varepsilon}
\DeclareMathOperator{\supp}{supp}
\DeclareMathOperator{\dist}{dist}
\DeclareMathOperator{\re}{Re}
\DeclareMathOperator{\NA}{NA}
\DeclareMathOperator{\RSE}{RSE}
\DeclareMathOperator{\ASE}{ASE}
\DeclareMathOperator{\SE}{SE}
\DeclareMathOperator{\Fl}{Fl}
\theoremstyle{plain}
\newtheorem{theorem}{Theorem}[section]
\newtheorem{cor}[theorem]{Corollary}
\newtheorem{prop}[theorem]{Proposition}
\newtheorem{lem}[theorem]{Lemma}
\theoremstyle{definition}
\newtheorem{example}[theorem]{Example}
\newtheorem{question}[theorem]{Question}
\newtheorem{rem}[theorem]{Remark}
\newtheorem{defi}[theorem]{Definition}
\newtheorem{examples}[theorem]{Examples}
\renewcommand{\tocsection}[3]{%
	\indentlabel{\@ifnotempty{#2}{\bfseries\ignorespaces#1 #2\quad}}\bfseries#3}
\renewcommand{\tocsubsection}[3]{%
	\indentlabel{\@ifnotempty{#2}{\ignorespaces#1 #2\quad}}#3}
\newcommand\@dotsep{4.5}
\def\@tocline#1#2#3#4#5#6#7{\relax
	\ifnum #1>\c@tocdepth % then omit
	\else
	\par \addpenalty\@secpenalty\addvspace{#2}%
	\begingroup \hyphenpenalty\@M
	\@ifempty{#4}{%
		\@tempdima\csname r@tocindent\number#1\endcsname\relax
	}{%
		\@tempdima#4\relax
	}%
	\parindent\z@ \leftskip#3\relax \advance\leftskip\@tempdima\relax
	\rightskip\@pnumwidth plus1em \parfillskip-\@pnumwidth
	#5\leavevmode\hskip-\@tempdima{#6}\nobreak
	\leaders\hbox{$\m@th\mkern \@dotsep mu\hbox{.}\mkern \@dotsep mu$}\hfill
	\nobreak
	\hbox to\@pnumwidth{\@tocpagenum{\ifnum#1=1\bfseries\fi#7}}\par% <-- \bfseries for \section page
	\nobreak
	\endgroup
	\fi}
\renewcommand\csname r@tocindent0\endcsname{0pt}
\def\l@subsection{\@tocline{2}{0pt}{2.5pc}{5pc}{}}
\begin{document}
\title[Bollobás-type theorems for RSE operators]{Bollobás-type theorems for range strongly exposing operators} 

\author[del R\'{\i}o]{Helena del R\'{\i}o} 
\address[]{Department of Mathematical Analysis and Institute of Mathematics (IMAG), University of Granada, E-18071 Granada, Spain \newline
	\href{https://orcid.org/0009-0004-5078-6993}{ORCID: \texttt{0009-0004-5078-6993} }}
\email[]{\texttt{helenadelrio@ugr.es}}

\date{\today}

% KEYWORDS 
\subjclass[2020]{Primary 46B04; Secondary 46B20, 46B22, 46B25, 47B01, 47B07}
%46B04 Isometric theory of Banach spaces
%46B20 Geometry and structure of normed linear spaces
%46B22 Radon-Nikodym, Kren-Milman and related properties [See also 46G10]
%46B25 Classical Banach spaces in the general theory
%47B01 Operators on Banach spaces
%47B07  Linear operators defined by compactness properties

\keywords{Norm-attaining operators, Bollobás theorem, Bishop-Phelps theorem, strongly exposed points, uniform convexity}

\begin{abstract} We study Bollobás-type theorems for range strongly exposing operators. When such a theorem holds for operators from a Banach space $X$ into another Banach space $Y$, we say that the pair $(X,Y)$ satisfies the Bishop-Phelps-Bollobás property for range strongly exposing operators (BPBp-RSE, for short). We provide new characterisations of uniform convexity and complex uniform convexity via the BPBp-RSE, including for pairs involving spaces such as $L_1(\mu), L_\infty(\mu)$ and $c_0$. In particular, we show that $(L_1(\mu), Y)$ satisfies the BPBp-RSE if and only if $Y$ is uniformly convex, and that $(L_\infty(\mu), Y)$ or $(c_0, Y)$ satisfy the BPBp-RSE if and only if $Y$ is $\mathbb{C}$-uniformly convex. We also highlight differences between the real and complex cases, showing that there exist pairs $(X, Y)$ for which the BPBp-RSE holds in the complex setting but fails for their respective underlying real spaces. Additionally, we consider various subspaces of operators, such as compact and finite-rank, and extend several results from the literature to this new setting. The paper concludes with a collection of open problems.

\end{abstract}

\maketitle
 {\parskip=0ex 	\tableofcontents }

\section{Introduction}

Let $X$ be a Banach space over the field of scalars $\K$, which can be either $\R$ or $\C$. We will denote by $B_X$ and $S_X$ the unit ball and the unit sphere of $X$, respectively, and we will write $\T:= S_{\K}$. $X^*$ will stand for the topological dual of $X$ and $\ELL(X, Y)$ will denote the Banach space of all bounded linear operators from $X$ to another Banach space $Y.$ An operator $T\in\ELL(X, Y)$ is said to \emph{attain its norm} if there is $x\in S_X$ such that $$\|Tx\|=\|T\|=\sup_{z\in B_X} \|Tz\|.$$
In this case, we say that $Y$ is \emph{norm-attaining}. The set of all norm-attaining operators between $X$ and $Y$ will be denoted by $\NA(X, Y).$ When $Y=\K,$ we will simply write $\NA(X).$

The classical Bishop-Phelps theorem states that the set of norm-attaining functionals is dense in the dual of every Banach space \cite{BP1961}. They asked whether this result could be extended to the vector valued case, which was answered negatively by Lindenstrauss in \cite{L1963}. He also provided some conditions assuring the denseness of norm-attaining operators between two Banach spaces; for instance, when the domain is reflexive or when the range satisfies the so-called property $\beta.$ Since then, an extensive research has been carried out with both positive and negative results. Some classical and recent results can be found in \cite{ACKP1998, Bourgain1970, JW1979, JMR2023, S1983, Uhl}. We also refer the interested reader to the expository papers \cite{A2006, M2016} and the references therein.

The discoveries of Bourgain were specially relevant to the theory as they relate the denseness of norm-attaining operators to the Radon-Nikodým property (RNP, for short). For instance, he proved in \cite{Bourgain1970} that if a Banach space $X$ has the RNP, then the absolutely strongly exposing operators (which are norm-attaining) are dense in $\ELL(X, Y)$ for every Banach space $Y$. 
\begin{defi}
    We say that $T\in\ELL(X, Y)$ is \emph{absolutely strongly exposing} and write $T\in \ASE(X, Y)$ if there exists $x_0\in S_X$ such that whenever a sequence $(x_n) \subseteq B_X$ satisfies that $\|Tx_n\| \to \|T\|$, there exists a subsequence $(x_{\sigma(n)}) \subseteq (x_n)$ and some $\lambda \in \mathbb{T}$ such that $x_{\sigma(n)} \to \lambda x_0$. 
\end{defi}

This means that $T$ attains its norm at an element which is unique up to rotation, in the strong way that whenever the norm of the image of a point in $x\in B_X$ approaches the maximum then $x$ must be close to this element. We refer the reader to \cite{JMR2023} for some of the properties of $\ASE$ operators and extensions of Bourgain's work. Although useful, the concept of absolutely strongly exposing operators seems to be very restrictive. Indeed, the existence of an operator $T\in\ASE(X, Y)$ implies that the unit ball of $X$ has strongly exposed points, and the denseness of $\ASE(X, Y)$ in $\ELL(X, Y)$ implies such of $\SE(B_X),$ the set of strongly exposing functionals, in $X^*$ (for the definition of this and other standard concepts used in this paper, see Subsection \ref{sec:definitions}). In particular, this concept turns out to be useless when the domain does not have any strongly exposed points, which is the case of some classical Banach spaces including $L_1(\mu)$ when $\mu$ is atomless and $C(K)$ when $K$ is infinite. This is why it was recently introduced in \cite{CDFJM2025} the following class of operators, which is bigger than absolutely strongly exposing operators although smaller than classical norm-attaining operators. 

\begin{defi}
    We say that $T\in\ELL(X, Y)$ is \textit{range strongly exposing} and write $T\in \RSE(X,Y)$ if there exists $x_0 \in S_X$ such that whenever a sequence $(x_n) \subseteq B_X$ satisfies  $\|Tx_n\| \to \|T\|$, there exists a subsequence $(x_{\sigma(n)}) \subseteq (x_n)$ such that $Tx_{\sigma(n)} \to \lambda Tx_0$ for some $\lambda \in \mathbb{T}$.
\end{defi}

The main difference with ASE operators is that for $T\in\RSE(X, Y)$ it is only required that the maximum value is unique (up to rotation), but the maximum norm may be attained at many different points in the domain. Among the known results on RSE operators, we outline the following ones. $\RSE(L_1(\mu), Y)$ is dense in $\ELL(L_1(\mu), Y)$ whenever $Y$ has the RNP, and the converse is also true when $\mu$ is not purely atomic \cite[Corollary 3.8, Theorem 3.10]{CDFJM2025}; weakly compact operators from $C(K)$ can always be approximated by weakly compact RSE operators for any compact Hausdorff space $K$ \cite[Theorem 3.15]{CDFJM2025}, and for every infinite dimensional Banach space $Y$, there is $X$ such that $\RSE(X, Y)$ is not dense in $\ELL(X, Y)$ \cite[Theorem 3.1]{CDFJM2025}.

Coming back to the Bishop-Phelps theorem, in \cite[Theorem~1]{B1970}, Bollobás stated a quantitative version of this result which allows us approximating simultaneously the functional and the point where the norm is almost attained.
This result, known nowadays as the Bishop-Phelps-Bollobás theorem, can be stated (with a refinement from \cite[Corollary~2.4]{CKMMR2014}) as follows.
\begin{theorem}\label{teo:BPB}
    For every $0<\eps<2$, if $x_0\in B_X$ and $x_0^*\in B_{X^*}$ are such that $\re x_0^*(x_0)>1-\frac{\eps^2}{2}$, then there exist $x\in S_X$ and $x^*\in S_{X^*}$ such that
    \begin{equation*}
        x^*(x)=1, \quad \|x^*-x_0^*\|<\eps \quad \text{and} \quad \|x-x_0\|<\eps.
    \end{equation*}
\end{theorem}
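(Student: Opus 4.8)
The plan is to reduce to real scalars and then run the Bishop--Phelps cone argument with an aperture tuned to $\eps$, extracting both the point $x$ and the functional $x^*$ from a single maximality-plus-separation step.

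\emph{Reduction to $\K=\R$.} A functional $x^*\in X^*$ is determined by its real part $u=\re x^*$, a bounded real-linear functional on the underlying real space $X_\R$, with $\|x^*\|=\|u\|$ and $\|x^*-y^*\|=\|\re x^*-\re y^*\|$. The hypothesis already concerns $u_0:=\re x_0^*$, namely $u_0(x_0)>1-\eps^2/2$, and $\|u_0\|=\|x_0^*\|\le 1$. If the real argument produces $x\in S_X$ and $u\in S_{(X_\R)^*}$ with $u(x)=1$, $\|x-x_0\|<\eps$ and $\|u-u_0\|<\eps$, then the complex functional $x^*$ with $\re x^*=u$ satisfies $\|x^*\|=1$, $\|x^*-x_0^*\|<\eps$, and $x^*(x)=1$ (its real part equals $1$ while $|x^*(x)|\le 1$). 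So it suffices to treat real scalars; write $f=x_0^*$ and $z=x_0$, so $f(z)>1-\eps^2/2$.

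\emph{Cone and maximal point.} Fix an aperture $\alpha\in(0,1)$ (ultimately $\alpha=\eps/2$, legitimate since $\eps<2$) and set $C=\{y\in X:\alpha\|y\|\le f(y)\}$, a closed convex cone, with associated order $y_1\preceq y_2\iff y_2-y_1\in C$. On $(z+C)\cap B_X$, which contains $z$, every chain is norm-Cauchy: along an increasing chain the values $f(\cdot)$ increase and are bounded by $\|f\|\le 1$, while $\alpha\|y_2-y_1\|\le f(y_2)-f(y_1)$. Completeness of the closed set $B_X$ then gives upper bounds, and Zorn's lemma a maximal element $x$, which means $(x+C)\cap B_X=\{x\}$. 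From $x-z\in C$ we already read off the distance estimate $\alpha\|x-z\|\le f(x)-f(z)\le 1-f(z)<\eps^2/2$, hence $\|x-z\|<\eps^2/(2\alpha)=\eps$.

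\emph{Separation and the functional.} Since the convex cone $x+C$ meets the convex body $B_X$ only at $x$, the Hahn--Banach separation theorem supplies (after normalisation) $x^*\in S_{X^*}$ with $x^*(x)=\max_{B_X}x^*=1$, so that $\|x\|=1$ and $x^*(x)=1$, together with $x^*\ge 0$ on $C$. Reading this positivity through the definition of $C$ is a polarity computation: $x^*\ge 0$ on $\{\,f\ge\alpha\|\cdot\|\,\}$ forces $x^*=\beta f+h$ with $\beta\ge 0$ and $\|h\|\le\beta\alpha$, and feeding in $\|x^*\|=1$, $x^*(x)=1$ and $f(x)\approx 1$ pins $\beta\approx 1$, whence $\|x^*-f\|\le\alpha=\eps/2<\eps$. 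This last estimate is the technical heart and the \emph{main obstacle}: one must convert the one-sided positivity of $x^*$ on the cone into a two-sided norm bound with the sharp constant, and separately handle the degenerate regime where $\|f\|$ fails to be close to $1$ (which can only occur for $\eps$ near $2$, precisely where the small diameter of $B_X$ relative to $\eps$ leaves ample room). Choosing $\alpha=\eps/2$ balances the two estimates, the strictness of the hypothesis $f(z)>1-\eps^2/2$ makes both final inequalities strict, and the proof closes; an alternative route replaces the Zorn step by Ekeland's variational principle applied to $y\mapsto\|f\|-f(y)$ on $B_X$, at the cost of the same constant-chasing to recover the norm-attaining functional.
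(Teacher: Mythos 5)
First, a point of reference: the paper does not prove this statement at all --- it is quoted as a known result, namely Bollob\'as' theorem \cite{B1970} together with the sharp refinement in \cite[Corollary~2.4]{CKMMR2014} (which is what allows $x_0\in B_X$, $x_0^*\in B_{X^*}$ and both strict inequalities with the constant $\eps$). So your attempt can only be judged on its own merits, and as written it has a genuine gap precisely at the step you yourself label ``the technical heart'': the passage from positivity of $x^*$ on the cone to the bound $\|x^*-x_0^*\|<\eps$. The cone--Zorn--separation skeleton is the right classical strategy, and your derivation of $\|x-x_0\|<\eps^2/(2\alpha)$ is correct; but the asserted conclusion $\|x^*-f\|\le\alpha$ does not follow from the dual-cone representation. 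Writing $x^*=\beta f+h$ with $\|h\|\le\beta\alpha$ and $\|x^*\|=1$ only pins $\beta$ to the interval $\bigl[\tfrac{1}{\|f\|+\alpha},\tfrac{1}{\|f\|-\alpha}\bigr]$, and the resulting estimate is $\|x^*-f\|\le|\beta-1|\,\|f\|+\beta\alpha\le\tfrac{2\alpha}{1-\alpha}$ even when $\|f\|=1$; this exceeds $2\alpha=\eps$, so the argument does not close. The standard sharper route (show $\|f|_{\ker x^*}\|\le\alpha$, extend by Hahn--Banach, write $f-h'=c\,x^*$) yields $\|x^*-f\|\le\|h'\|+|c-1|\le 2\alpha+(1-\|f\|)$, which gives only the non-strict bound $\eps$ when $\|f\|=1$ and degrades to $\eps+\eps^2/2$ when $\|x_0^*\|$ is merely in $B_{X^*}$; and no choice of aperture repairs this, since the point estimate forces $\alpha\ge\eps/2$ while the naive functional estimate needs $\alpha<\eps/2-\eps^2/4$. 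Closing this gap --- obtaining both strict inequalities with the same $\eps$ from the hypothesis $\re x_0^*(x_0)>1-\eps^2/2$ --- is exactly the content of the refinement in \cite{CKMMR2014} and requires a genuinely finer analysis than aperture tuning.

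A second, smaller gap is the ``degenerate regime''. You need $\|f\|>\alpha$ for the cone $C$ to be nontrivial (otherwise $C=\{0\}$ or a single ray, the maximal point need not lie on $S_X$, and the separation step collapses), and the hypothesis only guarantees $\|f\|\ge f(z)>1-\eps^2/2$, which is compatible with $\|f\|\le\eps/2$ as soon as $\eps\ge 1$. Your dismissal of this case --- that ``the small diameter of $B_X$ relative to $\eps$ leaves ample room'' --- is not correct: $\diam B_X=2>\eps$, so neither $\|x-x_0\|<\eps$ nor $\|x^*-x_0^*\|<\eps$ is automatic there, and indeed $\|x^*-f\|\ge 1-\|f\|$ can be close to $1$ while $\eps$ can be close to $1$ as well. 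This case must be handled by an actual argument (for instance by first perturbing $f$ towards a norm-one functional and re-budgeting the errors), not by a remark.
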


The version for operators of this theorem was introduced in the 2008 paper \cite{AAGM2008} as a property of pairs. We say that a pair of Banach spaces $(X, Y)$ satisfies the \textit{Bishop-Phelps-Bollobás property} (BPBp, for short) if for every $\eps >0$, there exists $0<\eta (\eps)<\eps$ such that when $T\in\ELL(X, Y)$ with $\|T\|=1$ and $x\in S_X$ satisfy
    \begin{equation*}
        \|Tx\|>1-\eta(\eps),
    \end{equation*}
    there exist $S\in \ELL(X, Y)$ and $x_0\in S_X$ satisfying
    \begin{equation*}
        \|S\|=\|Sx_0\|=1, \quad \|S-T\|<\eps \quad\text{and} \quad \|x-x_0\|<\eps.
    \end{equation*}

The expository papers \cite{A2019, DGMR2022} account for the extensive research that has been made on this topic. 
\begin{examples}
    Among all the results in the literature, it is known that the pair $(X, Y)$ has the BPBp when
\begin{enumerate}\label{ex:BPBp}
    \item $X$ is uniformly convex and $Y$ is any Banach space \cite[Theorem 3.1]{KL2014} and \cite[Corollary 2.3]{ABGM2013},
    \item\label{item:ell_1} $X=\ell_1$ and $Y$ satisfies the so called AHSP \cite[Theorem 4.1]{AAGM2008} (indeed, this result is a characterisation) 
    \item $X=\ell_1^m$ for some $m\in \N$ and $Y$ has the AHSP (by item \ref{item:ell_1} and \cite[Theorem~2.1]{ACKLM2015}),
    \item $X=L_1(\mu)$ where $\mu$ is a $\sigma$-finite measure on an infinite $\sigma$-algebra and $Y$ has the RNP and the AHSP (in particular, when $Y$ is uniformly convex) \cite[Theorem~2.2]{CK2011},
    \item $X=C(K)$ and $Y$ is uniformly convex \cite[Theorem 2.2]{KL2015},
    \item $X=C_0(L)$ and $Y$ is $\C$-uniformly convex in the complex case \cite[Theorem 2.4]{A2016},
    \item $Y$ has Lindenstrauss' property $\beta$ and $X$ is any Banach space \cite[Theorem 2.2]{AAGM2008} and
    \item $X$ is Asplund and $Y\subseteq C(K)$ is a uniform algebra \cite[Theorem 3.6]{CGK2013}.
\end{enumerate}
\end{examples}

Moreover, if $\mathcal{M}$ is a subspace of $\ELL(X, Y),$ we will say that $\mathcal{M}$ satisfies the BPBp if the conditions in the definition of the BPBp are satisfied for every $T\in S_{\mathcal M}$ and the operator $S\in \NA(X, Y)$ which approximates $T$ also belongs to $\mathcal{M}.$ Among others, the papers \cite{ABGKM2014, ABCCKLLM014, ACK2011,  CGK2013, CGKS2018, DGMM2018} address the BPBp of some subspaces of operators including compact, weakly compact, and finite rank operators. In the same direction of research, we send the interested reader to the recent papers \cite{KLMW2020F, KLMW2020, M2016, R2017}.

It may be of interest to consider versions of the Bishop-Phelps-Bollobás property where the initial operator is approximated not only by a norm-attaining operator but by an operator with some special behaviour such as absolutely strongly exposing or range strongly exposing operators. It is worth mentioning that the study of a Bollobás type of approximation by ASE operators has already been addressed in the recent paper \cite{JMR2023}. For RSE operators, it is natural to consider the following property.

\begin{defi}\label{defi:BPBp-RSE}
    Let $X, Y$ be Banach spaces. The pair $(X, Y)$  is said to satisfy the \textit{Bishop-Phelps-Bollobás property for} $\RSE$ (\emph{BPBp-RSE}, for short) if for every $\eps >0$, there exists $0<\eta (\eps)<\eps$ such that for every $T\in\ELL(X, Y)$ with $\|T\|=1$ and every $x\in S_X$ such that 
    \begin{equation*}
        \|Tx\|>1-\eta(\eps)
    \end{equation*}
    there exist $S\in \RSE(X, Y)$ and $x_0\in S_X$ that satisfy
    \begin{equation*}
        \|S\|=\|Sx_0\|=1, \quad\|S-T\|<\eps \quad\text{and}\quad \|x-x_0\|<\eps.
    \end{equation*}
\end{defi}
In the cases where the operator $S$ can be chosen to belong to $\ASE(X, Y),$ we will say that $(X, Y)$ has the \textit{BPBp-ASE}. As it happens with the BPBp, these two properties depend strongly on the geometry of the unit balls of the two involved Banach spaces. 

If $\mathcal{M}$ is a subspace of $\ELL(X, Y),$ we will say that $\mathcal{M}$ satisfies the BPBp-RSE if the conditions in Definition \ref{defi:BPBp-RSE} are satisfied for every $T\in S_{\mathcal M}$ and the operator $S\in \RSE(X, Y)$ also belongs to $\mathcal{M}.$  

\begin{rem}
    The aim of this paper is to study which pairs of Banach spaces or subspaces of operators satisfy the BPBp-RSE and compare it to the Bishop-Phelps-Bollobás property in the classical sense. It is immediate that the BPBp-RSE implies both the denseness of $\RSE(X, Y)$ and the BPBp. However, the converse does not hold in general, as it will be shown in Example \ref{ex: ell_1^2}. Thus, the study of the BPBp for RSE operators is not a mere extension of the BPBp among the pairs of Banach spaces for which the $\RSE$ operators are dense.
\end{rem}

Let us present the outline of the paper. In Section \ref{sec:preliminaries} we recall some basic definitions and preliminary results that will be used throughout the paper, and we derive the first immediate consequence: if $X$ is uniformly convex, then the pair $(X, Y)$ has the BPBp-ASE for every Banach space $Y$, refining the results in \cite[Theorem 3.1]{KL2014} and \cite[Corollary 2.3]{ABGM2013}.

In Section \ref{sec: characterisations}, we show that the BPBp-RSE of a pair $(X, Y)$ provides several characterisations of uniform convexity and $\C$-uniform convexity. It should be pointed out that uniform convexity has previously been characterised by Bollobás type properties in \cite[Theorem 2.1]{KL2014}. More precisely, in Subsection \ref{sec:char L1}, we show that the pair $(L_1(\mu), Y)$ with $\mu$ being any $\sigma$-finite measure with $\dim (L_1(\mu))\geq 2$ has the BPBp-RSE if and only if $Y$ is uniformly convex. Let us mention that the BPBp of the pair $(\ell_1, Y)$ is characterised by the AHSP in \cite[Theorem 4.1]{AAGM2008}, and if $Y$ has the RNP, then the BPBp of the pair $(L_1(\mu), Y)$ is also equivalent to the AHSP of $Y$ \cite[Theorem 2.2]{CK2011} when $\mu$ is a $\sigma$ finite measure on an infinite $\Sigma$-algebra.
It is also known that $\RSE(L_1(\mu), Y)$ is dense in $\ELL(L_1(\mu), Y)$ if and only if $Y$ has the RNP when $\mu$ is not purely atomic \cite[Corollary 3.8, Theorem 3.10]{CDFJM2025}. 

Next, in Subsection \ref{sec: M summand complex} we prove that, in the complex case, $\C$-uniform convexity of $Y$ is a sufficient condition for the pair $(C_0(L), Y)$ to have the BPBp-RSE, where $L$ is any locally compact Hausdorff space (improving \cite[Theorem 2.4]{A2016}). In particular, we derive that the pair $(L_\infty(\mu), Y)$ has the BPBp in the complex case if and only if $Y$ is $\C$-uniformly convex. It turns out that the same characterisation also holds in the complex case when the domain is a $c_0$-sum of copies of a uniformly convex Banach space (improving \cite[Theorem 2.4]{CK2022}). Moreover, in Subsection \ref{sec: M summand real}, we show that, in the real case, uniform convexity of $Y$ characterises the BPBp-RSE of the pairs $(L_\infty(\mu), Y)$ (improving \cite[Theorem 5]{KLL2016}) and $(c_0(X), Y)$ for any uniformly convex Banach space $X.$ 

These results are surprising as the notions of uniform convexity and $\C$-uniform convexity are completely different. Observe that, for instance, that the pair $(c_0, \ell_1)$ satisfies the BPBp-RSE in the complex case, although the pair formed by their underlying real spaces, $(c_0(\ell_2^2),\ell_1(\ell_2^2))$, does not have this property (see Example \ref{ex: c0 ell_1}). 

In Section \ref{sec:subspaces of operators} we address the BPBp-RSE for some subspaces of operators, including finite rank and compact operators. First, in Subsection \ref{sec: char for compact} we show that the characterisations of uniform convexity and $\C$-uniform convexity also hold if we restrict ourselves to compact operators defined on $L_1(\mu)$ and $L_\infty(\mu)$, respectively. In order to do so, we extend some of the techniques in \cite{DGMM2018} for passing from sequence spaces to function spaces to the RSE setting, which will allow us to obtain some consequences for compact operators defined on $C_0(L)$ and isometric $\ell_1$-preduals. Secondly, in Subsection \ref{sec:ACK} we aim to extend the results in \cite{CGKS2018} about range spaces with ACK$\rho$ structure to the RSE setting. In this case, we need to have conditions on both the domain and the range spaces as shown by Example \ref{ex: ell_1^2}. Using these results, we are able to derive some consequences for compact operators whose range is an isometric $L_1$-predual, extending \cite[Theorem~4.2]{ABCCKLLM014} to the RSE setting. 

Finally, in Section \ref{sec:discussion} we define universality notions of the domain and the range for the BPBp-ASE and the BPBp-RSE as it was done for the classical BPBp in \cite{ACKLM2015} and derive some necessary conditions for being a universal BPBp-RSE domain or range. Namely, we show that if $X$ is a Banach space such that the pair $(X, Z)$ has the BPBp-RSE for every $Z$, then the strongly exposed points of $B_X$ are dense in $B_X$; and if $Y$ is such that the pair $(Z, Y)$ has the BPBp-RSE for every Banach space $Z$ then $Y$ must be uniformly convex and finite dimensional. At the end of this section we discuss some open questions that we find of interest for future research.

\section{Preliminaries}\label{sec:preliminaries}

\subsection{Notation and first definitions}\label{sec:definitions} 
Let us begin recalling some basic definitions. Given a non-empty bounded subset $C$ of a Banach space $X$, a point $x_0\in C$ is called a \emph{strongly exposed point of} $C$ if there is $x^*\in X^*\backslash \{0\}$ such that
\begin{equation*}
    \re x^*(x_0) = \sup_{x\in C} \re x^*(x)
\end{equation*} 
and for every sequence $(x_n)\subseteq C$ such that $\lim_n \re x^*(x_n)=\re x^*(x_0)$ we have that $(x_n)_{n\in \N}$ converges in norm to $x_0.$ In this case, we say that $x^*$ \emph{strongly exposes} $x_0$ in $C$ and that $x^*$ is a \emph{strongly exposing functional} of $C$. We will denote by $\str(C)$ and $\SE(C)$ the strongly exposed points of $C$ and its strongly exposing functionals, respectively. When the set is the unit ball of $X$ we clearly have that $\ASE(X, \K)=\SE(B_X).$

There is a duality between strong exposition of the unit ball of a Banach space and smoothness in its dual. We say that a point $x$ is a \emph{smooth point} (equivalently, $x$ is a G\^{a}teaux differentiable point) of $X$ if there exists a unique $x^*\in S_{X^*}$ such that $x^*(x)=\|x\|$, and write $\smo(X)$ for the set of all smooth points of $X$. If $\smo(X)=X$ we will simply say that $X$ is \emph{smooth}. By the \v{S}mulyan's lemma (see \cite[Corollary 7.22]{FHHMZ}), a point $x \in X$ is a G\^{a}teaux (resp.,\ Fr\'echet) differentiable point if and only if for every $(f_n)$ and $(g_n)$ in $B_{X^*}$ such that $f_n(x)\longrightarrow \|x\|$ and $g_n(x)\longrightarrow \|x\|$ one has that $(f_n-g_n)$ converges to $0$ in the weak-star topology of $X^*$ (resp.,\ in the norm topology). In particular, the set of Fr\'echet differentiable points of $X^*$ coincides with $\SE(B_X),$ the set of strongly exposing functionals of $B_X.$

A Banach space $X$ is \emph{strictly convex} if there is no nontrivial line segment contained in $S_X.$ As a consequence of \v{S}mulyan's lemma and Hahn-Banach theorem, this is equivalent to that $\NA(X, \K)\subseteq \smo(X^*).$ In particular, this happens when $X^*$ is smooth.

We say that $X$ is \textit{uniformly convex} if for every $\eps>0$ there exists $0<\delta <1$ such that if $x_1, x_2\in S_X$ satisfy $$\frac{\|x_1 +x_2\|}{2}>1-\delta$$ then $\|x_1-x_2\|<\eps.$ In such a case, the \emph{modulus of uniform convexity of} $X$ is defined by
\begin{equation*}
    \delta_X(\eps):=\inf\left\{1-\frac{\|x_1+x_2\|}{2}: x_1, x_2\in B_X, \|x_1-x_2\|\geq \eps\right\} \quad (\eps\in (0,2)).
\end{equation*}
With this notation, $X$ is uniformly convex if and only if $\delta_x(\eps)>0$ for every $\eps\in(0,2).$

A Banach space $Y$ is called \emph{uniformly smooth} if its norm is uniformly Fr\'echet differentiable on every $y\in S_Y$. We have that $Y$ is smooth if and only if $Y^*$ is uniformly convex, and uniformly convex spaces are reflexive, so this two notions are completely symmetric. Examples of Banach spaces which are both uniformly convex and uniformly smooth are $\ell_p$ and $L_p$ for $p\in (1, \infty).$

\begin{rem}\label{rem: unif convex implies strongly exposed}
    It is not difficult to show that if $X$ is uniformly convex, then every point $x\in S_X$ is strongly exposed by any $f\in S_{X^*}$ such that $f(x)=1.$ Moreover, this shows that $S_X$ is a set of uniformly strongly exposed points of $B_X$ and, as $X$ is reflexive, $\SE(B_X)=X^*$.
\end{rem}

There is a weaker notion than uniform convexity that will be useful in the complex case. A complex Banach space $Y$ is said to be $\C$-\textit{uniformly convex} if for every $\eps>0$, there exists $\delta>0$ such that whenever
$y_0, y_1\in Y$ are such that $\|y_0+wy_1\|\leq 1$ for every $w\in \T$ and $\|y_1\|>\eps$ we have $\|y_0\|<1-\delta.$ 
We will use the following characterisation from \cite{G1975}. For $\delta\geq 0$, let
\begin{equation*}
\begin{split}
    w_c(\delta)&:=\sup \{\|y\| : x\in S_Y, y\in Y \ \text{ with } \|x+wy\|\leq 1+\delta \quad \forall w\in \C \text{ with } |w|<1\}\\
    &=\sup \{\|y\| : x\in S_Y, y\in Y \ \text{ with } \|x+wy\|\leq 1+\delta \quad \forall w\in \T\}.
\end{split}
\end{equation*}
Then, $Y$ is $\C$- uniformly convex if and only if 
\begin{equation*}
    \lim_{\delta\rightarrow 0} w_c(\delta)=0.
\end{equation*}
The \emph{modulus of $\C$-convexity of} $Y$ is defined by
\begin{equation*}
    \delta_\C(\eps)=\inf_{x,y\in S_Y} \left\{\sup_{\lambda\in \T} \|x+\lambda\eps y\|-1 \right\} \quad (\eps>0).
\end{equation*}
From the definitions above, it is clearly satisfied that $Y$ is $\C$-uniformly convex if and only if $\delta_\C(\eps)>0$ for every $\eps>0.$ Uniform convexity implies $\C$-uniform convexity, but the converse is not true. An example of a $\C$-uniformly convex Banach space which is not uniformly convex is $\ell_1$ (see \cite{G1975}).

Finally, the following property was introduced in \cite[Definition~3.9]{JMR2023} and can be viewed as a version of the Bishop-Phelps-Bollobás theorem for strongly exposing functionals.

\begin{defi}
    We say that a Banach space $X$ satisfies \textit{property [P]} if for every $\eps >0$, there exists $\eta (\eps)>0$ such that for every $x^*\in S_{X^*}$ and every $x\in S_X$ satisfying 
    \begin{equation*}
        \re x^*(x)>1-\eta(\eps)
    \end{equation*}
    there exist $y^*\in\SE(B_X)$ and $y\in S_X$ such that
    \begin{equation*}
        y^*(y)=1, \quad \|x^*-y^*\|<\eps, \quad \|x-y\|<\eps.
    \end{equation*}
\end{defi}

This is clearly equivalent to the BPBp-ASE of the pair $(X, \K)$ (see Definition \ref{defi:BPBp-RSE}). In \cite[Proposition~3.11]{JMR2023} it is proved that $\str(B_X)=S_X$ is a sufficient condition to get this property so, in particular, uniformly convex spaces have it. For uniformly smooth spaces, a weaker sufficient condition still works. Indeed, we have the following result.
\begin{prop}\label{prop:unif smo}
    Let $X$ be a uniformly smooth Banach space. Then, $(X, \K)$ has property [P] if and only if $\overline{\str(B_X)}=S_X.$
\end{prop}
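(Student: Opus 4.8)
The plan is to prove the two implications separately, exploiting that property [P] is exactly the BPBp-ASE of the pair $(X,\K)$ and that $\ASE(X,\K)=\SE(B_X)$. The elementary observation used in both directions is the following bridge: if $y^*\in\SE(B_X)$ is a norm-one functional attaining its norm at some $y\in S_X$ (i.e.\ $\|y^*\|=1$ and $y^*(y)=1$, as the BPBp-ASE formulation requires), then $y^*$ has a \emph{unique} maximiser on $B_X$, so the maximiser $y$ must coincide with the point that $y^*$ strongly exposes; hence $y\in\str(B_X)$. Conversely, if $y\in\str(B_X)$ then, $X$ being smooth (it is uniformly smooth), the normalised exposing functional of $y$ is forced to equal its unique norming functional $J(y)\in S_{X^*}$, so $J(y)\in\SE(B_X)\cap S_{X^*}$ and $J(y)(y)=1$. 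I will also use the duality ``$X$ uniformly smooth $\iff X^*$ uniformly convex'', writing $\delta_{X^*}$ for the modulus of uniform convexity of $X^*$.

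For the necessity ($\Rightarrow$) I would fix $x\in S_X$ and $\eps>0$, take a Hahn--Banach norming functional $x^*\in S_{X^*}$ of $x$, so that $\re x^*(x)=1>1-\eta(\eps)$ holds vacuously, and apply [P]. This yields $y^*\in\SE(B_X)\cap S_{X^*}$ and $y\in S_X$ with $y^*(y)=1$ and $\|y-x\|<\eps$; by the bridge, $y\in\str(B_X)$, whence $\dist\!\big(x,\str(B_X)\big)<\eps$. Letting $\eps\to0$ gives $x\in\overline{\str(B_X)}$, i.e.\ $\overline{\str(B_X)}=S_X$. This half is purely formal and uses neither uniform smoothness nor reflexivity; it shows that [P] always forces denseness of the strongly exposed points, so the real content of the statement is the converse.

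For the sufficiency ($\Leftarrow$), which is where uniform smoothness is indispensable and constitutes the main obstacle, I would set $\eta(\eps):=\tfrac12\delta_{X^*}(\eps)>0$ (positive precisely because $X^*$ is uniformly convex). Given $x^*\in S_{X^*}$ and $x\in S_X$ with $\re x^*(x)>1-\eta(\eps)$, the hypothesis $\overline{\str(B_X)}=S_X$ lets me choose $y\in\str(B_X)$ with $\|y-x\|<\min\{\eps,\tfrac12\delta_{X^*}(\eps)\}$, and I set $y^*:=J(y)$, so that $y^*\in\SE(B_X)\cap S_{X^*}$, $y^*(y)=1$, and $\|y-x\|<\eps$ are all immediate. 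The crux is to obtain $\|y^*-x^*\|<\eps$, and this is exactly the step that fails for a merely smooth $X$ but succeeds under uniform smoothness: the point is that $x^*$ and $y^*$ nearly norm the \emph{same} vector $y$. Indeed $\re y^*(y)=1$ while $\re x^*(y)\ge\re x^*(x)-\|y-x\|>(1-\eta(\eps))-\tfrac12\delta_{X^*}(\eps)=1-\delta_{X^*}(\eps)$, so $\big\|\tfrac12(x^*+y^*)\big\|\ge\re\big(\tfrac12(x^*+y^*)\big)(y)>1-\tfrac12\delta_{X^*}(\eps)\ge 1-\delta_{X^*}(\eps)$. Uniform convexity of $X^*$ then delivers $\|x^*-y^*\|<\eps$, and [P] holds with this $\eta$. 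The genuine difficulty, and the only place uniform smoothness enters, is precisely this passage from ``$x^*$ and $y^*$ both almost norm $y$'' to ``$x^*$ and $y^*$ are norm-close'', which is the defining feature of a uniformly convex dual.
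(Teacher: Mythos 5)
Your proof is correct and follows essentially the same route as the paper: both directions hinge on approximating $x$ by a point of $\str(B_X)$ and then using uniform convexity of $X^*$ (i.e.\ a Bishop--Phelps--Bollob\'as statement with the point held fixed) to move $x^*$ to a norming functional of that point. The only difference is cosmetic: where the paper invokes \cite[Corollary~2.2]{KL2014} and then perturbs the resulting norming functional to a strongly exposing one via \cite[Lemma~1.4]{JMR2023} (ending with a $2\eps$ estimate), you note that smoothness already forces the unique norming functional $J(y)$ of a strongly exposed point $y$ to be its strongly exposing functional and verify the estimate $\|x^*-y^*\|<\eps$ directly from the modulus $\delta_{X^*}$, which is self-contained and slightly sharper.
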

\begin{proof}
    One implication is clear from the definition of property [P]. To see the other one, suppose that $\overline{\str(B_X)}=S_X$ and let $\eps>0$. By \cite[Corollary~2.2]{KL2014}, there is $0<\eta(\eps)<1$ such that, for all $f\in B_{X^*}$ and all $x\in S_X$ satisfying $|f(x)|>1-\eta(\eps)$, there exists $f_0\in S_{X^*}$ with $|f_0(x)|=1$ and $\|f-f_0\|<\eps.$ Let $x\in S_X$ and  $x^*\in S_{X^*}$ be such that $$|x^*(x)|>1-\frac{\eta(\eps)}{2}.$$ There exists $y\in\str(B_X)$ such that $\|x-y\|<\min \{\frac{\eta(\eps)}{2}, \eps\}.$ Hence, $|x^*(y-x)|<\frac{\eta(\eps)}{2},$ so $$|x^*(y)|>|x^*(x)|-\frac{\eta(\eps)}{2}>1-\frac{\eta(\eps)}{2}-\frac{\eta(\eps)}{2}=1-\eta\left(\eps\right).$$
    Now, we can apply \cite[Corollary~2.2]{KL2014} to obtain $y_0^*\in S_{Y^*}$ such that $|y_0^*(y)|=1$ and $\|y_0^*-x^*\|<\eps$. By \cite[Lemma 1.4]{JMR2023}, we can find $y^*\in \SE(B_X)$ with $|y^*(y)|=1=\|y^*\|$ and $\|y^*-y_0^*\|<\eps,$ and therefore $\|x^*-y^*\|<2\eps.$
\end{proof}

\subsection{First results}
Now we will present some background on ASE and RSE operators and relate them to strongly exposed points and functionals. The following lemma is borrowed from 
\cite[Proposition~3.14]{CGMR2021}, \cite[Lemma~1.4]{JMR2023} and \cite[Lemma~2.6]{CDFJM2025}.

\begin{lem}[\mbox{\cite{CGMR2021,JMR2023, CDFJM2025}}]\label{lem:str-exp}
    Let $X, Y$ be Banach spaces and $T\in \ELL(X, Y).$ 
\begin{itemize}
    \item[(a)] If $T\in \ASE(X, Y)$ and $x_0\in S_X$ satisfies $\|Tx_0\|=\|T\|$, then $Tx_0\in \str((B_X))$ and it is strongly exposed by any $T^*y_0^*\in T^*(S_{Y^*})$ such that $\re y_0^*(Tx_0)=\|T\|.$
    \item[(b)] If $T\in \RSE(X, Y)$ and $x_0\in S_X$ satisfies $\|Tx_0\|=\|T\|$, then $Tx_0\in \str(\overline{T(B_X)})$ and it is strongly exposed by any $y_0^*\in S_{Y^*}$ such that $\re y_0^*(Tx_0)=\|T\|.$
\end{itemize}
\end{lem}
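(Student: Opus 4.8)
The plan is to treat both parts with a single two-move template: first exhibit a functional whose supremum over the relevant set equals $\|T\|$ and is attained at the claimed point, and then promote convergence of the functional \emph{value} to norm convergence of the \emph{point}, using the $\ASE$ (resp.\ $\RSE$) hypothesis together with a real-part computation that removes the unimodular ambiguity. Note that such a $y_0^*$ always exists by Hahn--Banach, so the content is to verify that any such $y_0^*$ works.

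For part (a) I would set $f := T^* y_0^* \in X^*$, which is nonzero since $\re f(x_0) = \re y_0^*(Tx_0) = \|T\| > 0$. For every $x \in B_X$ one has $\re f(x) = \re y_0^*(Tx) \le \|Tx\| \le \|T\|$, so $\sup_{x \in B_X}\re f(x) = \|T\|$, attained at $x_0$. Now take any $(x_n) \subseteq B_X$ with $\re f(x_n) \to \|T\|$. The squeeze $\re y_0^*(Tx_n) \le \|Tx_n\| \le \|T\|$ forces $\|Tx_n\| \to \|T\|$, so the $\ASE$ hypothesis yields a subsequence and some $\lambda \in \T$ with $x_{\sigma(n)} \to \lambda x_0$. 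Applying the continuous functional $\re f$ gives $\|T\| = \re f(\lambda x_0) = \|T\|\,\re \lambda$, hence $\re \lambda = 1$ and therefore $\lambda = 1$; thus $x_{\sigma(n)} \to x_0$. Since every subsequence of $(x_n)$ admits a further subsequence converging to $x_0$, the whole sequence converges to $x_0$, proving that $T^*y_0^*$ strongly exposes $x_0$ in $B_X$.

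For part (b) the only structural difference is that we work inside $\overline{T(B_X)}$ with the functional $y_0^*$ itself, so the closure must be handled. As before $\re y_0^*(w) \le \|w\| \le \|T\|$ for every $w \in \overline{T(B_X)}$ (the estimate passing to the closure by continuity of the norm and of $y_0^*$), with equality at $Tx_0$. Given $(w_n) \subseteq \overline{T(B_X)}$ with $\re y_0^*(w_n) \to \|T\|$, I would pick $x_n \in B_X$ with $\|w_n - Tx_n\| < 1/n$; then $\re y_0^*(Tx_n) \to \|T\|$ as well, and the same squeeze yields $\|Tx_n\| \to \|T\|$. The $\RSE$ hypothesis now provides a subsequence with $Tx_{\sigma(n)} \to \lambda Tx_0$, the real-part argument again forces $\lambda = 1$, so $Tx_{\sigma(n)} \to Tx_0$ and hence $w_{\sigma(n)} \to Tx_0$. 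The subsequence principle then gives $w_n \to Tx_0$, so $y_0^*$ strongly exposes $Tx_0$ in $\overline{T(B_X)}$.

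The only genuinely delicate points are pinning down $\lambda = 1$ and, in part (b), the passage to the closure. The former is exactly what the real part detects, since $\lambda \in \T$ with $\re(\lambda\|T\|) = \|T\|$ forces $\lambda = 1$; had we used the modulus rather than the real part we could not have excluded other unimodular $\lambda$. The latter is the routine approximation replacing elements of $\overline{T(B_X)}$ by genuine images $Tx_n$, after which the argument reduces to the same squeeze plus the standard principle that a sequence all of whose subsequences have further subsequences converging to a common limit converges to that limit.
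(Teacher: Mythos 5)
Your proof is correct and follows the standard argument; the paper itself states this lemma without proof (it is imported from the cited references \cite{CGMR2021}, \cite{JMR2023}, \cite{CDFJM2025}), and your two-step scheme --- the squeeze $\re y_0^*(Tx)\le\|Tx\|\le\|T\|$ to convert convergence of the functional values into $\|Tx_n\|\to\|T\|$, the real-part computation $\re(\lambda\|T\|)=\|T\|\Rightarrow\lambda=1$ to remove the rotation, the subsequence principle to upgrade subsequential to full convergence, and the $1/n$-approximation to handle the closure in (b) --- is exactly how it is established there. Note also that you correctly read through the typo in part (a): the strongly exposed point is $x_0\in\str(B_X)$ (exposed by $T^*y_0^*\in X^*$), not $Tx_0$.
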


This result has some consequences for both the BPBp-RSE and the BPBp-ASE. In particular, item (a) implies that $B_X$ must have strongly exposed points when there exists some $T\in\ASE(X, Y)$. In \cite[Proposition~1.5]{JMR2023} it is proved that if $\ASE(X, Y)$ is dense in $\ELL(X, Y)$ for some $Y$, then $\SE(B_X)$ is dense in $X^*.$ An analogous argument shows that if the pair $(X, Y)$ has the BPBp-ASE, then $X$ must satisfy property [P].

\begin{prop}\label{prop:property P is necessary}
    Let $X$, $Y$ be two Banach spaces and let $\mathcal M\subseteq \ELL(X, Y)$ be any subspace containing all rank one operators. If $\mathcal{M}$ has the BPBp-ASE, then $X$ has property [P]. 
\end{prop}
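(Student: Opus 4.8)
The plan is to adapt the rank-one-operator argument used for the denseness statement in \cite[Proposition~1.5]{JMR2023}. Let $\eta_{\mathcal M}$ denote the modulus witnessing the BPBp-ASE of $\mathcal M$. Fix $\eps>0$; I will build the modulus for property [P] by feeding $\eta_{\mathcal M}$ a strictly smaller input $\eps'>0$, to be calibrated at the very end, and set $\eta(\eps):=\eta_{\mathcal M}(\eps')$. Suppose $x^*\in S_{X^*}$ and $x\in S_X$ satisfy $\re x^*(x)>1-\eta(\eps)$. Fix any $y_0\in S_Y$ and consider the rank-one operator $T:=x^*\otimes y_0$, i.e.\ $Tz=x^*(z)\,y_0$. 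Since $\mathcal M$ contains all rank-one operators, $T\in\mathcal M$; moreover $\|T\|=1$ and $\|Tx\|=|x^*(x)|\ge\re x^*(x)>1-\eta_{\mathcal M}(\eps')$, so $T$ is admissible for the BPBp-ASE of $\mathcal M$.

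Applying the BPBp-ASE yields $S\in\ASE(X,Y)\cap\mathcal M$ and $x_0\in S_X$ with $\|S\|=\|Sx_0\|=1$, $\|S-T\|<\eps'$ and $\|x-x_0\|<\eps'$. By Hahn--Banach choose $z^*\in S_{Y^*}$ with $z^*(Sx_0)=\|Sx_0\|=1$; then $\re z^*(Sx_0)=\|S\|$, so Lemma~\ref{lem:str-exp}(a) tells us that $y^*:=S^*z^*\in\SE(B_X)$ and that it strongly exposes $x_0$. Putting $y:=x_0$, we immediately obtain $y\in S_X$, $\|x-y\|=\|x-x_0\|<\eps'<\eps$, and $y^*(y)=z^*(Sx_0)=1$. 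Thus the only remaining point to verify is $\|x^*-y^*\|<\eps$.

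For the final estimate I use that $T^*z^*=z^*(y_0)\,x^*$, whence
\[
\|x^*-y^*\|\le\|x^*-T^*z^*\|+\|T^*z^*-S^*z^*\|\le|1-z^*(y_0)|+\|S-T\|<|1-z^*(y_0)|+\eps'.
\]
To control $z^*(y_0)$ I compare it with $z^*(Sx_0)=1$: since $\|x_0\|=1$ we have $\|Sx_0-Tx_0\|<\eps'$, while $Tx_0=x^*(x_0)\,y_0$, so $\|Sx_0-y_0\|\le|1-x^*(x_0)|+\eps'$. Here $\re x^*(x_0)\ge\re x^*(x)-\|x-x_0\|>1-\eta_{\mathcal M}(\eps')-\eps'=:1-\delta$, and combining this with $|x^*(x_0)|\le1$ and the elementary bound $|1-w|\le\sqrt{2(1-\re w)}$ (valid whenever $|w|\le1$, since $|1-w|^2=1-2\re w+|w|^2\le 2(1-\re w)$) gives $|1-x^*(x_0)|<\sqrt{2\delta}$. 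Hence $|1-z^*(y_0)|\le\|y_0-Sx_0\|<\sqrt{2\delta}+\eps'$, and altogether $\|x^*-y^*\|<\sqrt{2\delta}+2\eps'$. As $\delta<2\eps'$, it suffices to pick $\eps'$ small enough that $2\sqrt{\eps'}+2\eps'<\eps$, which finishes the argument.

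I expect this last $\eps$-bookkeeping to be the only real obstacle: the functional $y^*=S^*z^*$ is handed to us as the correct strongly exposing functional by Lemma~\ref{lem:str-exp}(a), and $y^*(y)=1$ together with the proximity $\|x-y\|<\eps$ are essentially free; the delicate part is showing $y^*$ is genuinely close to $x^*$, which forces one to propagate the closeness $S\approx T$ through the a priori unknown scalar $z^*(y_0)$. This is exactly where the quadratic loss $\sqrt{\delta}$ enters and dictates the use of the smaller input parameter $\eps'$.
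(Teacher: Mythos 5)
Your proof is correct and follows essentially the same route as the paper: the rank-one operator $x^*\otimes y_0$, an application of the BPBp-ASE of $\mathcal M$, and Lemma \ref{lem:str-exp}(a) applied to $y^*=S^*z^*$. The only divergence is in the final estimate: the paper rotates the functional by a unimodular $\theta$ to get $\|\theta S^*z^*-x^*\|<2\eps$ directly (at the cost of only obtaining $|y^*(y)|=1$ rather than $y^*(y)=1$), whereas you keep $y^*=S^*z^*$ unrotated so that $y^*(y)=1$ holds exactly and control the phase of $z^*(y_0)$ via the elementary bound $|1-w|\le\sqrt{2(1-\re w)}$, paying a square-root loss that you correctly compensate by running the BPBp-ASE at a smaller parameter $\eps'$.
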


\begin{proof}
    Let $\eps>0$ be given. Suppose that $\mathcal{M}$ satisfies the BPBp-ASE with some function $\eta(\eps)>0$ and let $x_0^*\in S_{X^*}$ and $x_0\in S_X$ be such that $\re x_0^*(x_0)>1-\eta(\eps).$
 Fix $y_0\in S_Y$ and define $T: X\longrightarrow Y$ by $Tx=x_0^*(x)y_0.$ Then, $T\in M, \ \|T\|=\|x_0^*\|=1$ and
    \begin{equation*}
        \|Tx_0\|=\|x_0^*(x_0)y_0\|=|x_0^*(x_0)|>1-\eta\left(\eps\right).
    \end{equation*}
    Therefore, there exist $S\in \ASE(X, Y)\cap M$ and $x_1\in S_X$ such that 
    \begin{equation*}
        \|S\|=\|Sx_1\|=1, \quad\|S-T\|<\eps  \ \text{ and } \ \|x_0-x_1\|<\eps.
    \end{equation*}
    Let $y_0^*\in S_{Y^*}$ be such that $y_0^*(Sx_1)=S^*y_0^*(x_1)=1.$ Then,  $\|S^*y_0^*\|=1$ and $S^*y_0^*\in \SE(B_X)$ by Lemma \ref{lem:str-exp}. As 
    $$\left|1-y_0^*(y_0)x_0^*(x_1)\right|=|y_0^*(Sx_1-Tx_1)|\leq \|y_0^*\|\|S-T\|\|x_1\|< \eps$$
    then $|y_0^*(y_0)|\geq|y_0^*(y_0)||x_0^*(x_1)|>1-\eps,$ so $|1-|y_0^*(y_0)||\leq \eps.$ Now, take $\theta\in \T$ so that $\theta y_0^*(y_0)=|y_0^*(y_0)|.$ Then, $\theta S^*y_0^*\in \SE(B_X), \|\theta S^*y_0^*\|=1=|\theta S^*y_0^*(x_1)|, \|x_1-x_0\|< \eps$ and 
    \begin{equation*}
        \|\theta S^*y_0^*-x_0^*\|\leq \|\theta S^*y_0^*-\theta y_0^*(y_0)x_0^*\|+\|\theta y_0^*(y_0)x_0^*-x_0^*\|<2\eps. \qedhere
    \end{equation*}
\end{proof}  

This means that, when looking for pairs $(X, Y)$ that have the BPBp-ASE, we must search among those in which $X$ satisfies property [P], which is very restrictive as it implies  both that $\str(B_X)$ and $\SE(B_X)$ are dense in $S_X$ and $X^*$, respectively.

As for RSE operators, we cannot expect an analogous to property [P] as in the case of functionals we have $\RSE(X, \K)=\NA(X, \K)$; thus, we can expect the BPBp-RSE to be less restrictive. However, item (b) in Lemma \ref{lem:str-exp} has the following interesting consequence which will be useful to obtain some characterisations. 

\begin{lem}[\mbox{\cite[Lemma~3.3]{CDFJM2025}}]\label{lemma:easy-lemma-to-kill-propertyRSE-B}
Let $X$ and $Y$ be Banach spaces. If $S\in \RSE(X,Y)$ attains its norm at $x_0\in S_X$ and there is $z\in X$ such that $\|x_0+z\|,\|x_0-z\|\leq 1$, then $S(z)=0$.
\end{lem}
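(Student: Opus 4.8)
The plan is to exploit item (b) of Lemma~\ref{lem:str-exp}, which tells us that the image point $S(x_0)$ is strongly exposed in $\overline{S(B_X)}$ by any $y_0^*\in S_{Y^*}$ with $\re y_0^*(S x_0)=\|S\|$. The key idea is that if $\|x_0+z\|\le 1$ and $\|x_0-z\|\le 1$, then both $x_0+z$ and $x_0-z$ lie in $B_X$, so their images $S(x_0)\pm S(z)$ lie in $S(B_X)\subseteq \overline{S(B_X)}$; I would then argue that the strongly exposing functional $y_0^*$ must take its maximal real part at each of these two points, forcing $S(z)=0$ by strong exposedness.

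First I would normalise: without loss of generality assume $\|S\|=1$, so $\re y_0^*(Sx_0)=1$ and $Sx_0\in\str(\overline{S(B_X)})$ with exposing functional $y_0^*$. Next, set $u_{\pm}:=x_0\pm z\in B_X$, so $Su_{\pm}=Sx_0\pm Sz\in S(B_X)$. Since $y_0^*$ exposes $Sx_0$, we have $\re y_0^*(w)\le \re y_0^*(Sx_0)=1$ for all $w\in \overline{S(B_X)}$; applying this to $w=Su_+$ and $w=Su_-$ gives $\re y_0^*(Sz)\le 0$ and $-\re y_0^*(Sz)\le 0$, hence $\re y_0^*(Sz)=0$. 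Consequently $\re y_0^*(Su_+)=\re y_0^*(Su_-)=1$, i.e.\ both $Su_+$ and $Su_-$ are sequences (constant sequences) along which the exposing functional attains its supremum. By the definition of a strongly exposed point, any such sequence must converge in norm to the exposed point $Sx_0$; applied to the constant sequences this yields $Su_+=Sx_0$ and $Su_-=Sx_0$, and subtracting gives $2Sz=Su_+-Su_-=0$, so $Sz=0$.

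I expect the only delicate point to be confirming that the strong-exposedness condition can legitimately be applied to constant sequences: the definition quantifies over sequences $(x_n)$ with $\re y_0^*(x_n)\to \re y_0^*(Sx_0)$, and a constant sequence trivially satisfies this, so the conclusion forces the constant value to equal $Sx_0$. This is immediate but worth stating cleanly, since it is the mechanism that converts the equality $\re y_0^*(Su_{\pm})=1$ into the genuine equalities $Su_{\pm}=Sx_0$ rather than merely an inequality on real parts. No further estimates are needed, and the argument is entirely self-contained given Lemma~\ref{lem:str-exp}(b).
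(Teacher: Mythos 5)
Your argument is correct: it normalises $\|S\|=1$ (the case $S=0$ being trivial), picks a norming functional $y_0^*$ for $Sx_0$, invokes Lemma~\ref{lem:str-exp}(b), and uses the constant-sequence form of strong exposedness to force $S(x_0\pm z)=Sx_0$. This is essentially the same mechanism the paper has in mind --- the lemma is quoted from \cite{CDFJM2025} without proof, but the surrounding discussion explains it precisely via $Sx_0$ being an extreme (indeed strongly exposed) point of $\overline{S(B_X)}$ written as the midpoint of $S(x_0+z)$ and $S(x_0-z)$, which is the extremality phrasing of your exposing-functional argument.
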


The third ingredient will allow us to approximate norm-attaining operators by operators in RSE or ASE which are rank one perturbations of the original one and attain their norm at the same point. A proof of this proposition can be found in \cite[Proposition~3.14]{CGMR2021} and \cite[Lemma~2.6]{CDFJM2025}. 

\begin{prop}[\mbox{\cite{CGMR2021, CDFJM2025}}]\label{prop: aproximate NA by RSE or ASE}
Let $X$ and $Y$ be Banach spaces. If $T\in \ELL(X, Y)$ and $\|Tx_0\|=\|T\|$ for some $x_0 \in S_X$ such that $Tx_0\in\str(\overline{T(B_X)}),$ then for every $\eps>0$, there exists $S\in \RSE(X, Y)$ such that $\|S-T\|<\eps$, $\|Sx_0\|=\|S\|$ and $S-T$ has rank one. If $x_0\in\str(B_X)$, then we can choose $S$ so that $S\in \ASE(X, Y).$ 
\end{prop}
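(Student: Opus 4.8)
The plan is to normalise $\|T\|=1$ and to exhibit $S$ as an explicit rank-one perturbation $S=T+\delta\,(x^*\otimes Tx_0)$, where $x^*\in S_{X^*}$ satisfies $x^*(x_0)=1$ and $\delta\in(0,\eps)$. For \emph{any} such $x^*$ one immediately has $Sx_0=(1+\delta)Tx_0$, so $\|Sx_0\|=1+\delta$; and since $\|Sx\|\le\|Tx\|+\delta|x^*(x)|\le 1+\delta$ for $x\in B_X$, the norm $\|S\|=1+\delta$ is attained at $x_0$, while $\|S-T\|=\delta<\eps$ and $S-T$ has rank one. Thus the whole content is to choose $x^*$ so that $S$ becomes range strongly exposing (resp.\ absolutely strongly exposing) with witness $x_0$.

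For the RSE conclusion I would exploit the range exposition hypothesis. Let $y_0^*\in S_{Y^*}$ strongly expose $Tx_0$ in $\overline{T(B_X)}$ and set $\alpha:=\re y_0^*(Tx_0)=\sup_{x\in B_X}\re y_0^*(Tx)=\|T^*y_0^*\|$. Since $\overline{T(B_X)}$ is balanced (it contains $\lambda Tx_0$ for every $\lambda\in\T$), the value $y_0^*(Tx_0)$ is automatically real and equal to $\alpha$, and $\alpha>0$ (otherwise $y_0^*\circ T\equiv 0$, contradicting that $y_0^*$ exposes the nonzero point $Tx_0$). The crucial step is to take $x^*:=T^*y_0^*/\alpha\in S_{X^*}$, which satisfies $x^*(x_0)=1$ and, more importantly, the identity $x^*(x)=y_0^*(Tx)/\alpha$. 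Now if $(x_n)\subseteq B_X$ has $\|Sx_n\|\to\|S\|=1+\delta$, then $\|Sx_n\|\le\|Tx_n\|+\delta|x^*(x_n)|\le 1+\delta$ forces $\|Tx_n\|\to1$ and $|x^*(x_n)|\to1$, i.e.\ $|y_0^*(Tx_n)|\to\alpha$. Choosing $\lambda_n\in\T$ with $\lambda_n y_0^*(Tx_n)=|y_0^*(Tx_n)|$ gives $\re y_0^*(\lambda_n Tx_n)\to\alpha=\sup_{\overline{T(B_X)}}\re y_0^*$ with $\lambda_n Tx_n=T(\lambda_n x_n)\in T(B_X)$, so strong exposition yields $\lambda_n Tx_n\to Tx_0$. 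Since also $\lambda_n x^*(x_n)=|y_0^*(Tx_n)|/\alpha\to1$, one gets $\lambda_n Sx_n=\lambda_n Tx_n+\delta\lambda_n x^*(x_n)Tx_0\to(1+\delta)Tx_0=Sx_0$; passing to a subsequence with $\lambda_n\to\lambda_0\in\T$ gives $Sx_{\sigma(n)}\to\overline{\lambda_0}Sx_0$, which is exactly the RSE condition at $x_0$.

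For the ASE refinement, when $x_0\in\str(B_X)$ I would instead take $x^*$ to be a functional strongly exposing $x_0$ in $B_X$ (normalised so that $x^*(x_0)=1=\|x^*\|$). The same computations give $\|S\|=\|Sx_0\|=1+\delta$, $\|S-T\|=\delta$, and $S-T$ of rank one. For $\|Sx_n\|\to 1+\delta$ the same estimate forces $|x^*(x_n)|\to1$; choosing $\lambda_n\in\T$ with $\lambda_n x^*(x_n)=|x^*(x_n)|$ gives $\re x^*(\lambda_n x_n)\to1=\sup_{B_X}\re x^*$, so now strong exposition \emph{in the domain} yields $\lambda_n x_n\to x_0$, and a subsequence with $\lambda_n\to\lambda_0$ gives $x_{\sigma(n)}\to\overline{\lambda_0}x_0$, i.e.\ $S\in\ASE(X,Y)$ with witness $x_0$.

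The routine parts (the norm identities, and the elementary fact that $\|Tx_n\|+\delta|x^*(x_n)|\to1+\delta$ with each term bounded by its maximum forces each to converge) are immediate. The only genuinely delicate point is the choice of perturbation direction in the RSE case: one cannot use an arbitrary supporting functional of $x_0$, because $\|Tx_n\|\to1$ gives no control on $|y_0^*(Tx_n)|$. It is precisely the choice $x^*=T^*y_0^*/\alpha$ that turns the weak information $|x^*(x_n)|\to1$ into the usable $|y_0^*(Tx_n)|\to\alpha$, which is what feeds the range strong exposition; verifying the normalisations $\|T^*y_0^*\|=\alpha$ and $\alpha>0$ (both consequences of $\overline{T(B_X)}$ being balanced) is the small technical hurdle underlying it.
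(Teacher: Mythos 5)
Your proposal is correct and is essentially the standard argument behind this proposition (which the paper does not reprove but imports from \cite[Proposition~3.14]{CGMR2021} and \cite[Lemma~2.6]{CDFJM2025}): the rank-one perturbation $S=T+\delta\,x^*\otimes Tx_0$ with $x^*=T^*y_0^*/\|T^*y_0^*\|$ in the RSE case and $x^*$ a strongly exposing functional of $x_0$ in the ASE case, followed by the ``sum of two terms each bounded by its maximum'' argument. All the key normalisations ($y_0^*(Tx_0)=\alpha=\|T^*y_0^*\|>0$ via balancedness of $\overline{T(B_X)}$) and the rotation/subsequence step are handled correctly, so nothing is missing.
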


Although many of the results on the classical BPBp will not have an immediate extension to the RSE or ASE settings, we see that this can be done easily when one of the involved spaces has enough strongly exposed points. More concretely, Proposition \ref{prop: aproximate NA by RSE or ASE} has the following straightforward consequence.

\begin{cor}\label{cor:strBPBp}
    Let $X$ and $Y$ be Banach spaces and $\mathcal{I}(X, Y)$ a subspace of $\ELL(X, Y)$ containing all rank one operators, and suppose that $\mathcal{I}(X, Y)$ has the BPBp.
    \begin{itemize}
        \item[i)] If $\str(B_X)=S_X$ then $\mathcal{I}(X, Y)$ has the BPBp-ASE.
        \item[ii)] If $\str(B_Y)=S_Y$ then $\mathcal{I}(X, Y)$ has the BPBp-RSE.
    \end{itemize}
\end{cor}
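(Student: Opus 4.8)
The plan is to combine the hypothesis that $\mathcal I(X,Y)$ has the BPBp with the rank-one perturbation result of Proposition~\ref{prop: aproximate NA by RSE or ASE}. The BPBp already produces, from an operator that almost attains its norm, a genuinely norm-attaining operator in $\mathcal I(X,Y)$; the only remaining task is to upgrade this norm-attaining operator to an ASE (resp.\ RSE) one without moving it, its point of attainment, or leaving the subspace $\mathcal I(X,Y)$. Concretely, for a target $\eps>0$ I would set the modulus for the BPBp-ASE/RSE to be $\eta_0(\eps/2)$, where $\eta_0$ is the modulus witnessing the BPBp of $\mathcal I(X,Y)$. Given $T\in S_{\mathcal I(X,Y)}$ and $x\in S_X$ with $\|Tx\|>1-\eta_0(\eps/2)$, the BPBp yields $S'\in\mathcal I(X,Y)$ and $x_0\in S_X$ with $\|S'\|=\|S'x_0\|=1$, $\|S'-T\|<\eps/2$ and $\|x-x_0\|<\eps/2$.

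For part i), since $\str(B_X)=S_X$ we have $x_0\in\str(B_X)$ automatically. Proposition~\ref{prop: aproximate NA by RSE or ASE} then provides $S\in\ASE(X,Y)$ with $\|S-S'\|<\eps/2$, $\|Sx_0\|=\|S\|$, and $S-S'$ of rank one. As $\mathcal I(X,Y)$ is a subspace containing all rank-one operators, $S=S'+(S-S')\in\mathcal I(X,Y)$. Combining the estimates gives $\|S-T\|<\eps$ and $\|x-x_0\|<\eps/2<\eps$, while $S\in\ASE(X,Y)\cap\mathcal I(X,Y)$ attains its norm at $x_0$; this is exactly the BPBp-ASE for $\mathcal I(X,Y)$.

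For part ii) the structure is identical, the only difference being the verification of the hypothesis of Proposition~\ref{prop: aproximate NA by RSE or ASE} in the form needed to produce an RSE operator, namely that $S'x_0\in\str(\overline{S'(B_X)})$. This is where the main (though modest) obstacle lies. I would argue as follows: since $\|S'x_0\|=\|S'\|=1$ we have $S'x_0\in S_Y$, so $\str(B_Y)=S_Y$ gives $S'x_0\in\str(B_Y)$, strongly exposed by some $y_0^*\in S_{Y^*}$. Now $\overline{S'(B_X)}\subseteq B_Y$ and $S'x_0\in\overline{S'(B_X)}$, and strong exposition is inherited by subsets containing the exposed point: if $y_0^*$ strongly exposes $p$ in a set $C$ and $p\in D\subseteq C$, then $y_0^*$ strongly exposes $p$ in $D$, because $\re y_0^*$ still attains its supremum over $D$ at $p$ (as $p\in D\subseteq C$) and any sequence in $D\subseteq C$ along which $\re y_0^*$ tends to this supremum converges to $p$ by the hypothesis on $C$. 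Hence $S'x_0\in\str(\overline{S'(B_X)})$, and Proposition~\ref{prop: aproximate NA by RSE or ASE} yields $S\in\RSE(X,Y)$ with the same rank-one perturbation property; the rest is verbatim as in part i).

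I expect no genuine difficulty beyond the inheritance-of-strong-exposition observation in part ii) and the bookkeeping of the two $\eps/2$ error terms; the substantive content is entirely carried by Proposition~\ref{prop: aproximate NA by RSE or ASE}.
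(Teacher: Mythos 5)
Your argument is correct and is exactly the route the paper intends: the paper leaves this corollary as a "straightforward consequence" of Proposition~\ref{prop: aproximate NA by RSE or ASE}, combining the BPBp with a rank-one perturbation at the point of attainment, and your inheritance-of-strong-exposition observation for part ii) is the same device the paper uses elsewhere (e.g.\ in the proof of Theorem~\ref{teo:C_0(L)}). The only cosmetic point is that the perturbed operator $S$ satisfies $\|S\|=\|Sx_0\|$ but not necessarily $\|S\|=1$ as the definition requires, which is fixed by replacing $S$ with $S/\|S\|$ at the cost of a further $|1-\|S\||<\eps/2$ in the operator estimate (so start from $\eps/3$ rather than $\eps/2$).
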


In particular, it follows that $\str(B_X)=S_X$ is a sufficient condition for property [P], which was proved in \cite[Proposition~3.11]{JMR2023}. 
Another consequence of Corollary \ref{cor:strBPBp} is that, when $\str(B_X)=S_X$, the BPBp, BPBp-RSE and BPBp-ASE are all equivalent. By Remark \ref{rem: unif convex implies strongly exposed}, this corollary can be applied when $X$ or $Y$ is uniformly convex. We shall point out a first consequence which provides an improvement of \cite[Theorem~3.1]{KL2014} and \cite[Corollary~2.3]{ABGM2013}.
\begin{theorem}
    Let $X$ be a uniformly convex Banach space. Then, the pair $(X, Y)$ has the BPBp-ASE for every Banach space $Y$. Moreover, this property is satisfied by any subspace $\mathcal M\subseteq \ELL(X, Y)$ containing all finite rank operators.
\end{theorem}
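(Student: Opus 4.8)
The plan is to deduce both assertions from Corollary \ref{cor:strBPBp}(i), whose hypotheses are that $\str(B_X)=S_X$ and that the subspace in question already has the classical BPBp. The first hypothesis is immediate: since $X$ is uniformly convex, Remark \ref{rem: unif convex implies strongly exposed} gives $\str(B_X)=S_X$ (in fact $S_X$ consists of uniformly strongly exposed points of $B_X$). Thus it suffices to prove that every subspace $\mathcal M\subseteq\ELL(X,Y)$ containing the finite-rank operators has the BPBp, and then invoke Corollary \ref{cor:strBPBp}(i) with $\mathcal I=\mathcal M$; the claim for the whole pair $(X,Y)$ is just the case $\mathcal M=\ELL(X,Y)$, which is Examples \ref{ex:BPBp}(a). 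So the entire problem reduces to upgrading the known (full-space) uniformly convex BPBp to the subspaces $\mathcal M$.

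To obtain the BPBp for a general such $\mathcal M$, I would revisit the proof of the uniformly convex case and check that the approximating norm-attaining operator can be produced as a perturbation of $T$ that stays inside $\mathcal M$. Concretely, given $T\in\mathcal M$ with $\|T\|=1$ and $x\in S_X$ with $\|Tx\|>1-\eta(\eps)$, I would choose $y^*\in S_{Y^*}$ norming $Tx$ (so $y^*(Tx)=\|Tx\|$), set $f:=T^*y^*\in B_{X^*}$ (whence $\re f(x)=\|Tx\|$ is close to $1$), and feed $f$ and $x$ into the scalar Bishop-Phelps-Bollobás theorem (Theorem \ref{teo:BPB}) to get $x_1\in S_X$ and $g\in S_{X^*}$ with $g(x_1)=1$, $\|x_1-x\|<\eps$ and $\|g-f\|<\eps$. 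Writing $y_0:=Tx/\|Tx\|$, so that $y^*(y_0)=1$, the rank-one perturbation
\[
S_1:=T+(g-f)\otimes y_0
\]
lies in $\mathcal M$, satisfies $\|S_1-T\|\le\|g-f\|<\eps$ and $S_1^*y^*=g$; since $g(x_1)=1$ one already gets $\|S_1\|\ge\|S_1^*y^*\|=1$ and $\|S_1x_1\|\ge|y^*(S_1x_1)|=1$, and the uniform strong exposition of $x_1$ by $g$ (quantified by $\delta_X$) forces $x_1$ to be a near-maximiser of $\|S_1(\cdot)\|$.

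The hard part is that a single rank-one perturbation only yields \emph{approximate} norm attainment: with $Y$ arbitrary and $T$ itself possibly not norm-attaining, the images of competing unit vectors may point in different directions of $Y$, so $\|S_1\|$ and $\|S_1x_1\|$ need not coincide exactly. To upgrade to exact attainment I would iterate the construction in the spirit of Bollobás, using the modulus of convexity $\delta_X$ to make the successive rank-one corrections decay geometrically; their sum then lies in $\overline{\mathcal F(X,Y)}$, the resulting operator attains its norm at a point $x_1\in S_X$ with $\|x_1-x\|<\eps$, differs from $T$ by less than $\eps$, and still belongs to $\mathcal M$. This gives the BPBp for $\mathcal M$. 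Finally, since $x_1\in\str(B_X)=S_X$, Corollary \ref{cor:strBPBp}(i) (equivalently, a direct application of Proposition \ref{prop: aproximate NA by RSE or ASE}, which only adds a further rank-one change to pass from norm-attaining to $\ASE$) promotes this to the BPBp-ASE for $\mathcal M$, completing the proof. I expect the geometric control in the iteration — securing exact attainment, closeness to $T$, and membership in $\mathcal M$ simultaneously — to be the only genuinely delicate step.
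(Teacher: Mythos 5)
Your reduction is exactly the paper's: the theorem is presented there as an immediate consequence of Corollary~\ref{cor:strBPBp}(i), of Remark~\ref{rem: unif convex implies strongly exposed} (which gives $\str(B_X)=S_X$), and of the known BPBp of $(X,Y)$ for uniformly convex $X$ from \cite{KL2014} and \cite{ABGM2013}; the paper offers no further argument and leaves the BPBp of the subspaces $\mathcal M$ to those references. Your diagnosis of where the substance lies is also accurate: a single rank-one correction of the form $T+(g-f)\otimes y_0$ only produces approximate norm attainment, and the classical proofs do run a Lindenstrauss/Bollob\'as-type iteration with geometrically decaying rank-one corrections.

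The one step you should not wave through is the claim that the limit of that iteration ``still belongs to $\mathcal M$''. The sum of the corrections lies in $\overline{\FR(X,Y)}$ (it is a nuclear, hence compact, perturbation of $T$), but $\mathcal M$ is only assumed to \emph{contain} $\FR(X,Y)$, not its closure; for a non-closed $\mathcal M$ (for instance $\FR(X,Y)+\K T$) the limiting operator need not lie in $\mathcal M$. To close this you must either assume $\mathcal M$ closed — which covers the ideals the paper actually uses, such as $\KP(X,Y)$ and the weakly compact operators, but not the statement as literally written — or arrange the construction so that the total perturbation is genuinely of finite rank (not obvious, since both the exposing functionals and the range directions change from step to step). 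The final upgrade from norm-attaining to $\ASE$ via Proposition~\ref{prop: aproximate NA by RSE or ASE} is unproblematic, as that perturbation is a single rank-one operator and so stays in $\mathcal M$.
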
 

\begin{example} 
    Let $X$ be a subspace of $\ell_p$ or $L_p(\mu)$ for $1<p<\infty.$ Then, $(X, Y)$ has the BPBp-ASE for every Banach space $Y$.
\end{example}

We conclude this section with a short remark on a stronger version of the BPBp-ASE.

\begin{rem}
    Following \cite{CDKKLM2020}, a Banach space $X$ is said to be uniformly micro-semitransitive if there exists a function $\beta\colon (0,2)\longrightarrow \R^+$ such that whenever $x, y\in S_X$ satisfy that $\|x-y\|<\beta(\eps)$, then there exists $T\in \ELL(X)$ with $\|T\|=1$ such that $Tx=y \quad \text{and} \quad \|T- \id\|<\eps$. Using Proposition \ref{prop: aproximate NA by RSE or ASE} above, we can easily show that if $X$ is uniformly micro-semitransitive (therefore, $X$ is uniformly smooth and uniformly convex at the same time - see \cite[Corollaries 2.13 and 2.14]{CDKKLM2020}), then $(X, Y)$ satisfies the following stronger version of the BPBp-ASE for every Banach space $Y$: for every $\eps >0$, there exists $0<\eta (\eps)<\eps$ such that for every $T\in\ELL(X, Y)$ with $\|T\|=1$ and every $x_0\in S_X$ such that $\|Tx_0\|>1-\eta(\eps)$, there exists $S\in \ASE(X, Y)$ such that $\|S\|=\|Sx_0\|=1 \quad \text{and} \quad \|S-T\|<\eps$. This can be viewed as a pointwise BPBp-ASE in the sense that the new operator $S$ attains its norm at the same point where the operator $T$ almost attains its norm.
\end{rem}

\section{Characterisations of uniform convexity and complex uniform convexity}\label{sec: characterisations}
From now on, unless otherwise stated all the Banach spaces considered will be assumed to have dimension greater than or equal to two. The aim of this section is to show that the BPBp-RSE of the pair $(X, Y)$ characterises the uniform convexity of $Y$ when $X=L_1(\mu)$, where $\mu$ is any $\sigma$-finite measure, and the $\C$-uniform convexity of $Y$  when $X=L_\infty(\mu)$, where $\mu$ is any measure, or when $X$ is a $c_0$-sum of copies of a uniformly convex Banach space. In the real case, the BPBp-RSE also provides a characterisation of uniform convexity of $Y$ for these spaces.

These kinds of spaces have the property that they can be decomposed into $L$- or $M$-summands, and this fact will be used to provide the ``only if" part of the desired characterisations. Recall that if a Banach space can be decomposed as $X=X_1\oplus_1 X_2$ then we say that $X_1$ and $X_2$ are $L$-summands of $X$. If $X=X_1\oplus_\infty X_2$, then we say that $X_1$ and $X_2$ are $M$-summands of $X$. 

The key to our arguments lies in the following geometrical idea due to Lindenstrauss \cite{L1963}: if an operator $S\in \ELL(X, Y)$ attains its norm at a point $x$ in the interior of a segment $[x_1, x_2]\subseteq S_X$ and $Sx$ is an extreme point of $B_Y$, then $S$ must carry the whole segment to the point $Sx$. Actually, it suffices that $Sx$ is an extreme point of $S(B_X),$ which happens in particular when $S\in \RSE(X, Y)$ by Lemma \ref{lem:str-exp}. This is exactly the idea captured in Lemma \ref{lemma:easy-lemma-to-kill-propertyRSE-B} above, and it will be used to obtain necessary conditions on $Y$ for a pair $(X, Y)$ to have the BPBp-RSE when $X$ has an $L$- or an $M$-summand. 

\subsection{When the domain has an $L$-summand}\label{sec:char L1}
The proof of the following result can be easily adapted from \cite[Lemma~3.2]{ACKLM2015} using Lemma \ref{lemma:easy-lemma-to-kill-propertyRSE-B} instead of strict convexity of $Y$.
\begin{prop}\label{prop:l1sum}
    Let $X$ be a Banach space such that $X=X_1\oplus_1 X_2$ for non-trivial subspaces $X_1, X_2$ and let $Y$ be any Banach space. If $(X, Y)$ has the BPBp-RSE then $Y$ is uniformly convex.
\end{prop}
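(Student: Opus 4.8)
The plan is to argue by contradiction: assuming $(X,Y)$ enjoys the BPBp-RSE but $Y$ is \emph{not} uniformly convex, I will manufacture a norm-one operator that almost attains its norm at the midpoint of a segment lying in $S_X$, and then exploit the $\ell_1$-decomposition together with Lemma~\ref{lemma:easy-lemma-to-kill-propertyRSE-B} to force the approximating $\RSE$ operator to identify two far-apart points of $S_Y$, which is absurd. This is the $\RSE$-analogue of the scheme in \cite[Lemma~3.2]{ACKLM2015}: there strict convexity of $Y$ is used to say that an operator is constant along a segment of $S_X$ mapped into an extreme point, whereas here Lemma~\ref{lemma:easy-lemma-to-kill-propertyRSE-B} plays exactly that role, since an $\RSE$ operator sends its norm-attaining point to a strongly exposed (hence extreme) point of $\overline{S(B_X)}$.

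Concretely, suppose $Y$ fails uniform convexity, witnessed by some $\eps_0>0$. Fix $u\in S_{X_1}$, $v\in S_{X_2}$ and norming functionals $u^*\in S_{X_1^*}$, $v^*\in S_{X_2^*}$ with $u^*(u)=v^*(v)=1$, and set $x_0:=\tfrac12(u,0)+\tfrac12(0,v)\in S_X$ (the norm is $1$ because the decomposition is an $\ell_1$-sum). I would then choose $\eps>0$ small, depending only on $\eps_0$, put $\delta=\eta(\eps)$ from the BPBp-RSE, and pick $y_1,y_2\in S_Y$ with $\|y_1+y_2\|>2(1-\eta(\eps))$ and $\|y_1-y_2\|\ge \eps_0$. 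Defining $T(x_1,x_2):=u^*(x_1)\,y_1+v^*(x_2)\,y_2$, one checks $\|T\|=1$ and $Tx_0=\tfrac12(y_1+y_2)$, so $\|Tx_0\|>1-\eta(\eps)$. The BPBp-RSE then provides $S\in\RSE(X,Y)$ and $x_0'\in S_X$ with $\|S\|=\|Sx_0'\|=1$, $\|S-T\|<\eps$ and $\|x_0'-x_0\|<\eps$.

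Writing $x_0'=(a_1,a_2)$, the two component-distances to $(u/2,v/2)$ add up to $\|x_0'-x_0\|<\eps$, so each is $<\eps$; hence for small $\eps$ both $a_1,a_2$ are nonzero, and I set $u_i:=a_i/\|a_i\|$. For any $0<s<\min(\|a_1\|,\|a_2\|)$ the vector $z:=(s\,u_1,-s\,u_2)$ satisfies $\|x_0'+z\|=\|x_0'-z\|=1$, so Lemma~\ref{lemma:easy-lemma-to-kill-propertyRSE-B} yields $Sz=0$, that is, $S(u_1,0)=S(0,u_2)$. Since $\|S-T\|<\eps$ bounds the restrictions of $S-T$ to $X_1$ and $X_2$, this identity transfers to $T$ with an error below $2\eps$, giving $\|u^*(u_1)\,y_1-v^*(u_2)\,y_2\|<2\eps$.

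To close, I would estimate the scalars $u^*(u_1)$ and $v^*(u_2)$ from the closeness of $x_0'$ to $x_0$: from $|u^*(a_1)-\tfrac12|<\eps$ and $|\,\|a_1\|-\tfrac12\,|<\eps$ one gets $u^*(u_1)\to 1$ as $\eps\to0$, and likewise $v^*(u_2)\to1$. Combining these with the displayed inequality produces $\|y_1-y_2\|=O(\eps)$, which contradicts $\|y_1-y_2\|\ge \eps_0$ as soon as $\eps$ is chosen small enough in terms of $\eps_0$. The only genuinely delicate point is this final bookkeeping: I must keep both components of $x_0'$ bounded away from $0$ so that the denominators $\|a_1\|,\|a_2\|$ (appearing when normalising to $u_1,u_2$) are controlled uniformly, ensuring $u^*(u_1),v^*(u_2)$ are close to $1$ rather than merely close to their attained values. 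Everything else reduces to the triangle inequality and the $\ell_1$-additivity of the norm.
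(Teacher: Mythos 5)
Your proof is correct and follows essentially the same route as the paper, which simply adapts \cite[Lemma~3.2]{ACKLM2015} by letting Lemma~\ref{lemma:easy-lemma-to-kill-propertyRSE-B} replace strict convexity of $Y$ --- exactly the substitution you make with the vector $z=(s\,u_1,-s\,u_2)$. The quantitative bookkeeping at the end (components of $x_0'$ bounded below by $\tfrac12-\eps$, hence $u^*(u_1),v^*(u_2)\to 1$) is handled correctly and works in both the real and complex cases.
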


Turning to the particular case in which $X=L_1(\mu)$, we are now able to provide the first characterisation.

\begin{theorem}\label{teo:L_1 char}
    Let $Y$ be a Banach space, $(\Omega, \Sigma, \mu)$ any $\sigma$-finite measure space and $m\in \N, m\geq 2$. The following are equivalent.
   \begin{itemize}
       \item[i)] The pair $(\ell_1^m, Y)$ has the BPBp-RSE.
       \item[ii)] The pair $(\ell_1, Y)$ has the BPBP-RSE.
       \item[iii)] The pair $(L_1(\mu), Y)$ has the BPBp-RSE.
       \item[iv)] $Y$ is uniformly convex.
   \end{itemize}
\end{theorem}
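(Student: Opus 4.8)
The plan is to prove the chain of implications $iv) \Rightarrow iii) \Rightarrow ii) \Rightarrow i) \Rightarrow iv)$, exploiting the fact that each of $\ell_1^m$, $\ell_1$, and $L_1(\mu)$ (with $\dim \geq 2$) decomposes as a non-trivial $\oplus_1$-sum, so that Proposition \ref{prop:l1sum} applies uniformly to close the loop. The core observation is that the hard analytic content is packaged in two external inputs: Proposition \ref{prop:l1sum} (any $L$-summand in the domain forces $Y$ uniformly convex) and the positive BPBp-RSE result for the $L_1(\mu)$ domain.

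\textbf{The implication $iv) \Rightarrow iii)$.} First I would show that uniform convexity of $Y$ yields the BPBp-RSE for the pair $(L_1(\mu), Y)$. Since $Y$ is uniformly convex, it has the RNP and the AHSP, so by \cite[Theorem~2.2]{CK2011} the pair $(L_1(\mu), Y)$ has the classical BPBp. The point is then to upgrade the approximating norm-attaining operator $S$ to an RSE operator. Here I would invoke Corollary \ref{cor:strBPBp}(ii): because $Y$ is uniformly convex, Remark \ref{rem: unif convex implies strongly exposed} gives $\str(B_Y)=S_Y$, and since any subspace containing all rank-one operators (in particular $\ELL(L_1(\mu),Y)$ itself) has the BPBp, part (ii) of the corollary immediately promotes the BPBp to the BPBp-RSE. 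This is essentially a direct citation and should be short.

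\textbf{The implications $iii) \Rightarrow ii) \Rightarrow i)$.} These are restriction/reduction steps. For $iii) \Rightarrow ii)$, I would note that $\ell_1 = \ell_1(\mathbb{N})$ is itself an $L_1(\mu)$ space for counting measure $\mu$ on $\mathbb{N}$, so this is a special case of $iii)$. Alternatively, one reduces $\ell_1$ to $L_1(\mu)$ via the isometric embedding as an $L$-summand and transfers the property. For $ii) \Rightarrow i)$, I would cite \cite[Theorem~2.1]{ACKLM2015} (the finite-dimensional reduction already used in item (c) of Examples \ref{ex:BPBp}), which transfers a BPBp-type property from $\ell_1$ to $\ell_1^m$; the same argument adapts to the RSE setting since restricting an operator on $\ell_1$ to the first $m$ coordinates preserves the relevant norm-attainment and exposing structure. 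I expect these steps to be essentially formal, modulo checking that the RSE property is compatible with the restriction maps.

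\textbf{The implication $i) \Rightarrow iv)$ and the main obstacle.} This is where Proposition \ref{prop:l1sum} does the work: since $\ell_1^m = \ell_1^{m-1} \oplus_1 \K$ with $m \geq 2$ is a non-trivial $L$-decomposition, the BPBp-RSE of $(\ell_1^m, Y)$ forces $Y$ to be uniformly convex. This closes the cycle. The subtle point, and what I expect to be the main obstacle, is the $iv) \Rightarrow iii)$ step for \emph{arbitrary} $\sigma$-finite $\mu$: the cited positive BPBp result \cite[Theorem~2.2]{CK2011} is stated for measures on an infinite $\sigma$-algebra, so the purely atomic finite-dimensional case (e.g. $\mu$ with finitely many atoms, giving $L_1(\mu)=\ell_1^m$) must be handled either by the separate $\ell_1^m$ machinery or by a direct argument. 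Reconciling these regimes so that the single clean statement ``$(L_1(\mu),Y)$ has the BPBp-RSE for every $\sigma$-finite $\mu$'' holds uniformly is the delicate bookkeeping; everything else reduces to assembling the cited results through the $L$-summand structure.
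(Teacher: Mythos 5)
Your two key ingredients are exactly the paper's: Proposition \ref{prop:l1sum} gives that any non-trivial $L$-decomposition of the domain forces uniform convexity of $Y$, and the known BPBp for $(L_1(\mu),Y)$, $(\ell_1,Y)$, $(\ell_1^m,Y)$ with $Y$ uniformly convex upgrades to the BPBp-RSE via Corollary \ref{cor:strBPBp}(ii) because $\str(B_Y)=S_Y$. The only difference is organizational, and it is where your route acquires an avoidable weak link: the paper proves the equivalence as a star (each of i)--iii) implies iv) by Proposition \ref{prop:l1sum}, since $\ell_1^m$, $\ell_1$ and every at-least-two-dimensional $L_1(\mu)$ split as non-trivial $\ell_1$-sums, and iv) implies each of i)--iii) by Examples \ref{ex:BPBp} plus Corollary \ref{cor:strBPBp}), whereas your cycle requires the step iii) $\Rightarrow$ ii) for a \emph{fixed} $\mu$. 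Your justifications for that step do not quite work when $\mu$ is atomless: $\ell_1$ is then neither equal to $L_1(\mu)$ nor an $L$-summand of it (an atomless $L_1(\mu)$ is an $\ell_1$-sum of further $L_1$ pieces, not of copies of $\K$), so no direct restriction argument produces $(\ell_1,Y)$ from $(L_1(\mu),Y)$. Since you already observe that Proposition \ref{prop:l1sum} applies uniformly to all three domains, the fix is simply to run iii) $\Rightarrow$ iv) $\Rightarrow$ ii) instead; your flagged concern about the purely atomic finite-dimensional case is likewise resolved by the fact that the $\ell_1^m$ case of the BPBp is covered separately in Examples \ref{ex:BPBp}, which is how the paper handles it.
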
 
\begin{proof}
    Each of the statements i)-iii) imply iv) by Proposition \ref{prop:l1sum}. On the other hand, if $Y$ is uniformly convex then $(L_1(\mu), Y)$ has the BPBp for any $\sigma$-finite measure $\mu$ by Examples \ref{ex:BPBp}, so we can apply Corollary \ref{cor:strBPBp} to conclude that the pairs also have the BPBp-RSE.
\end{proof}

\begin{example}\label{ex: ell_1^2}
    The pair $(\ell_1^2, \ell_1^2)$ does not have the BPBp-RSE, although every pair of finite dimensional Banach spaces satisfy BPBp by \cite[Proposition~2.4]{AAGM2008} and $\ASE(\ell_1^2,\ell_1^2)$ is dense in $\ELL(\ell_1^2,\ell_1^2)=\NA(\ell_1^2,\ell_1^2)$ by Bourgain's result \cite{Bourgain1970}. This example also shows that $\C$-uniform convexity of the domain or the range is not enough to pass from the BPBp to the BPBp-RSE.
\end{example}

\subsection{When the domain has an $M$-summand in the complex case}\label{sec: M summand complex}
The parallel notion that emerges here is $\C$-uniform convexity. The result we present for domain spaces with $M$-summands improves \cite[Theorem~8]{KLL2016}, stated for $X=c_0, \ell_\infty^m$ and a $\C$-strictly convex Banach space $Y$. The proof is inspired by the argument in \cite[Lemma 3.2]{ACKLM2015}. 

\begin{theorem}\label{prop:M summand}
    Let $X, Y$ be complex Banach spaces such that $X=X_1\oplus_\infty X_2$ for non-trivial subspaces $X_1, X_2.$ Suppose that one of each conditions hold.
    \begin{itemize}
        \item[i)] The pair $(X, Y)$ has that BPBp-RSE.
        \item[ii)] The pair $(X, Y)$ has that BPBp and $Y$ is $\C$-strictly convex.
    \end{itemize}
    Then, $Y$ is $\C$-uniformly convex.
\end{theorem}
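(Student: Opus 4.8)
The plan is to prove the contrapositive: assuming $Y$ is \emph{not} $\C$-uniformly convex, I will construct, for arbitrarily small $\delta$, a norm-one operator that almost attains its norm at a point of $S_X$ but whose approximants (produced by either hypothesis) are forced to degenerate, yielding a contradiction. First I would record what the failure of $\C$-uniform convexity provides through the modulus $w_c$ from the preliminaries: since $w_c$ is nondecreasing and $\lim_{\delta\to 0}w_c(\delta)=\inf_{\delta>0}w_c(\delta)>0$, there is $\eps_0>0$ such that for \emph{every} $\delta>0$ one can choose $x_\delta\in S_Y$ and $y_\delta\in Y$ with $\|x_\delta+wy_\delta\|\le 1+\delta$ for all $w\in\T$ and $\|y_\delta\|>\eps_0$.

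Next I would fix unit vectors $e_1\in S_{X_1}$, $e_2\in S_{X_2}$ and norming functionals $f_1\in S_{X_1^*}$, $f_2\in S_{X_2^*}$ with $f_i(e_i)=1$, and set $T(x_1,x_2):=f_1(x_1)x_\delta+f_2(x_2)y_\delta$. The crucial computation is the norm bound $\|T\|\le 1+\delta$: for $\|x_1\|,\|x_2\|\le 1$ the scalars $\alpha=f_1(x_1)$, $\beta=f_2(x_2)$ range over the closed unit disc, and $(\alpha,\beta)\mapsto\|\alpha x_\delta+\beta y_\delta\|$ is a convex function of $(\alpha,\beta)\in\C^2$, so its maximum over the polydisc is attained at an extreme point, i.e.\ with $|\alpha|=|\beta|=1$, where $\|\alpha x_\delta+\beta y_\delta\|=\|x_\delta+(\beta/\alpha)y_\delta\|\le 1+\delta$ since $\beta/\alpha\in\T$. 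This is precisely where the circular condition defining $\C$-uniform convexity is matched against the supremum norm of the $M$-summand. As $\|T(e_1,0)\|=\|x_\delta\|=1$ we get $\|T\|\in[1,1+\delta]$, and $\hat T:=T/\|T\|$ satisfies $\|\hat T\|=1$ and $\|\hat T(e_1,0)\|=1/\|T\|>1-\delta$.

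Then, given $\eps>0$, I would choose $\delta:=\eta(\eps)$ so that $\hat T$ almost attains its norm at $(e_1,0)\in S_X$, and invoke the hypothesis to obtain $x_0'\in S_X$ and $S$ with $\|S\|=\|Sx_0'\|=1$, $\|S-\hat T\|<\eps$ and $\|(e_1,0)-x_0'\|<\eps$; here $S\in\RSE(X,Y)$ under (i) and $S\in\NA(X,Y)$ under (ii). The structural step is to use the supremum norm: writing $x_0'=(u_1,u_2)$, the estimate $\|(e_1,0)-x_0'\|<\eps$ forces $\|u_2\|<\eps$ and $\|u_1\|=1$ (for $\eps<1$), so that $z:=(0,(1-\eps)e_2)$ satisfies $\|x_0'+wz\|=\max(1,\|u_2+w(1-\eps)e_2\|)\le 1$ for all $w\in\T$. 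Under (i) the real case already suffices: $\|x_0'\pm z\|\le 1$ together with $S\in\RSE$ attaining its norm at $x_0'$ gives $Sz=0$ by Lemma \ref{lemma:easy-lemma-to-kill-propertyRSE-B}; under (ii) one instead notes $\|Sx_0'+wSz\|=\|S(x_0'+wz)\|\le 1=\|Sx_0'\|$ for all $w\in\T$, so $\C$-strict convexity, i.e.\ $Sx_0'$ being a complex extreme point of $B_Y$, again forces $Sz=0$. In both cases $S(0,e_2)=0$, whence $\|\hat T(0,e_2)\|=\|\hat T(0,e_2)-S(0,e_2)\|\le\|\hat T-S\|<\eps$. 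But $\|\hat T(0,e_2)\|=\|y_\delta\|/\|T\|\ge\eps_0/(1+\delta)$, so $\eps_0/(1+\delta)<\eps$, which is impossible once $\eps<\eps_0/2$ and $\eps<1$. This contradiction shows that $Y$ must be $\C$-uniformly convex.

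The main obstacle I anticipate is that the approximating operator attains its norm only at $x_0'$, which is merely \emph{close} to $(e_1,0)$ and need not lie at the centre of an exact segment or circle contained in $S_X$, whereas the degeneracy tools apply only at the exact attaining point. The resolution, as indicated above, is that the supremum norm of the $M$-summand converts the proximity $\|(e_1,0)-x_0'\|<\eps$ into the genuine smallness $\|u_2\|<\eps$, and that shrinking the perturbation by the factor $(1-\eps)$ places the whole family $\{x_0'+wz:w\in\T\}$ (in particular $x_0'\pm z$) inside $B_X$, so that Lemma \ref{lemma:easy-lemma-to-kill-propertyRSE-B} in case (i) and complex strict convexity in case (ii) become applicable at $x_0'$ itself. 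A secondary point to verify carefully is the convexity/maximum-modulus reduction to the distinguished boundary $|\alpha|=|\beta|=1$ in the norm bound for $T$, which is exactly the mechanism linking $M$-summands to $\C$-uniform convexity.
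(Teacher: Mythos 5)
Your proposal is correct and follows essentially the same route as the paper: the same rank-two operator $T(x_1,x_2)=f_1(x_1)x_\delta+f_2(x_2)y_\delta$, the same norm estimate via extreme points of the bidisc (the paper phrases it as $\tilde T$ attaining its norm at an extreme point of $B_{\ell_\infty^2}$), the same use of the $\ell_\infty$-structure to turn $\|(e_1,0)-x_0'\|<\eps$ into $\|u_2\|<\eps$, and the same application of Lemma~\ref{lemma:easy-lemma-to-kill-propertyRSE-B} (resp.\ $\C$-strict convexity) to force $S(0,e_2)=0$. The only cosmetic difference is that you argue by contradiction from the negation of $\C$-uniform convexity, whereas the paper directly establishes the quantitative bound $w_c(\delta)\leq\eps$ for $\delta<\eta(\eps/2)$; these are equivalent.
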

\begin{proof}
    Let $0<\eps<1.$ We will show that if $0\leq \delta <\eta\left(\frac{\eps}{2}\right)$, then $w_c(\delta)\leq \eps,$ where $\eta\left(\frac{\eps}{2}\right)<\frac{\eps}{2}$ is taken from the definition of BPBp-RSE. Let $y_0\in S_Y$ and $y\in Y$ be such that $\|y_0+wy\|\leq 1+\delta$ for every $w\in\T.$ 

For $i=1, 2$ we choose $e_j\in S_{X_j}$ and $e_j\in S_{X^*}$ to be such that $e_j^*(e_j)=1$ and $e^*_j(X_i)=0$ for $i\neq j$. Now, define $T\in \ELL(X, Y)$ by
\begin{equation*}
        T(x_1, x_2)=e_1^*(x_1)y_0 + e_2^*(x_2)y_1,
\end{equation*}
that is, $T=\tilde{T} \circ P$ where $P \colon X\longrightarrow \ell_\infty^2$ is given by $P(x_1, x_2)=(e_1^*(x_1), e_2^*(x_2))$ and satisfies $\|P\|\leq 1$ and $\tilde{T}\colon \ell_\infty^2\longrightarrow Y$ is defined by 
$$\tilde{T}(1,0)=y_0 \ \text{ and } \ \tilde{T}(0,1)=y_1.$$
    Then, $\|T(e_1, 0)\|=\|y_0\|=1,$ so $\|T\|\geq\|T(e_1, 0)\|=1$, and
    $$\|T\|\leq \|P\|\|\tilde{T}\|\leq \|\tilde{T}\|= \sup_{w\in \T}\|y_0+wy\|\leq 1 +\delta <1+\eta\left(\frac{\eps}{2}\right),$$
    the equality being true because $\tilde{T}$ attains its norm at an extreme point of $B_{\ell_\infty^2}$. This implies that $\|T(e_1, 0)\|=1>\|T\|-\eta\left(\frac{\eps}{2}\right)$ and, therefore
    \begin{equation*}
        \left\| \frac{T}{\|T\|}(e_1, 0) \right\|=\frac{1}{\|T\|}>1-\frac{\eta\left(\frac{\eps}{2}\right)}{\|T\|}>1-\eta\left(\frac{\eps}{2}\right),
    \end{equation*}
so we can apply that $(X,Y)$ has the BPBp-RSE to get an operator $S\in\RSE(X, Y)$ and a point $x\in S_{X}$ such that $$\|Sx\|=1=\|S\|, \ \ \|x-(e_1, 0)\|<\frac{\eps}{2} \ \text{ and } \ \left\|S-\frac{T}{\|T\|}\right\|<\frac{\eps}{2}.$$
Form the last inequality and the fact that $\left\|\frac{T}{\|T\|}-T\right\|=\|T\|-1<\eta\left(\frac{\eps}{2}\right)<\frac{\eps}{2}$ we derive that $\|S-T\|<\eps.$  If we write $x=(x_1, x_2)\in S_{X_1\oplus_\infty X_2}$ we have $\|x_2\|<\frac{\eps}{2}<1$ and $\|x_1\|=1$. Now, if $0<\lambda<1-\|x_2\|$ then $\|x_2\pm \lambda w e_2\|\leq \|x_2\|+\lambda<1$ for every $w\in \T,$ so $(x_1, x_2\pm\lambda w e_2)\in S_X$  and therefore $S(0, e_2)=0$ by Lemma \ref{lemma:easy-lemma-to-kill-propertyRSE-B} (or using $\C$-strict convexity of $Y$ if we only assume that $(X, Y)$ has the BPBp). This implies that 
\begin{equation*}
    \|y\|=\|T(0, e_2)\|=\|T(0, e_2)-S(0, e_2)\|<\eps.
\end{equation*}
Taking the supremum over all $y_0\in S_Y, y\in Y$ satisfying that $\|y_0+wy\|\leq 1+\delta$ for every $w\in\T$ we finally get that 
\begin{equation*}
    w_c(\delta)\leq \eps. \qedhere 
\end{equation*}
\end{proof}

\subsubsection{When the domain is $C_0(L)$}

The following theorem improves \cite[Theorem~2.4]{A2016}, which states that $\C$-uniform convexity of $Y$ is a sufficient condition for the pair $(C_0(L), Y)$ to have the BPBp.

\begin{theorem}\label{teo:C_0(L)}
    Let $L$ be a locally compact Hausdorff topological space and $Y$ be a complex Banach space. If $Y$ is $\C$-uniformly convex, then the pair $(C_0(L), Y)$ has the BPBp-RSE. If, moreover, $L$ is not connected, the converse also holds.
\end{theorem}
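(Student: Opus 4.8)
The statement splits into a sufficiency and a necessity direction, and I would treat them separately: the necessity follows formally from the $M$-summand result already available, while the sufficiency is where the real work lies.

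\textbf{Necessity.} Assuming $L$ is not connected, the plan is to produce an $M$-summand decomposition of $C_0(L)$ and then quote Theorem~\ref{prop:M summand}. Disconnectedness gives a nonempty proper clopen subset $U\subsetneq L$; writing $V=L\setminus U$, both $U$ and $V$ are clopen, nonempty and locally compact, so restriction yields an isometric identification $C_0(L)=C_0(U)\oplus_\infty C_0(V)$. Local compactness and Urysohn's lemma give a nonzero bump function in each summand, so both $C_0(U)$ and $C_0(V)$ are non-trivial. Since $(C_0(L),Y)$ has the BPBp-RSE, the first hypothesis of Theorem~\ref{prop:M summand} is met with $X=C_0(L)$, and its conclusion forces $Y$ to be $\C$-uniformly convex. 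This disposes of the converse with essentially no computation.

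\textbf{Sufficiency.} Here I would start from the fact that $\C$-uniform convexity of $Y$ already yields the classical BPBp for $(C_0(L),Y)$ by \cite[Theorem~2.4]{A2016}, so the whole novelty is to arrange that the approximating operator lies in $\RSE(C_0(L),Y)$. The obstruction to simply combining \cite{A2016} with Proposition~\ref{prop: aproximate NA by RSE or ASE} is that a black-box norm-attaining $S_0$ need not have $S_0g_0$ strongly exposed in $\overline{S_0(B_{C_0(L)})}$; and since $\C$-uniform convexity does not make strongly exposed points dense in $S_Y$ (in $\ell_1$ the strongly exposed points are only the multiples $\T\, e_n$), one cannot reduce to the case $\str(B_Y)=S_Y$ and invoke Corollary~\ref{cor:strBPBp}. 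I would therefore reprove the Bollobás estimate by a construction that builds in the exposition by hand. Concretely: given $T$ with $\|T\|=1$ and $f\in S_{C_0(L)}$ with $\|Tf\|>1-\eta(\eps)$, after a unimodular rotation I would fix $t_0\in L$ with $f(t_0)=\|f\|=1$, pick $y^*\in S_{Y^*}$ norming $Tf$, and study the measure $\mu=T^*y^*\in C_0(L)^*$, whose mass concentrates on $\{|f|\approx 1\}$ as $\eta\to 0$. The operator $S$ would be produced as a small, norm-controlled modification of $T$ that is concentrated at $t_0$ — morally so that $\|Sh\|$ is governed by the peak value $h(t_0)$ — with the perturbation calibrated through the $\C$-convexity modulus $w_c(\delta)$.

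The two checks that make such an $S$ work are the heart of the matter. First, for a peak function $g$ close to $f$ one needs $\|S\|=\|Sg\|=1$; this is precisely where $\C$-uniform convexity enters, since bounding $\|S\|$ amounts to bounding the "disc directions" that combine with the peak, and $w_c(\delta)\to 0$ keeps these under control, in the same spirit as Lemma~\ref{lemma:easy-lemma-to-kill-propertyRSE-B}. Second, and this is the genuinely new step beyond \cite{A2016}, I must verify $S\in\RSE(C_0(L),Y)$ rather than merely $S\in\NA(C_0(L),Y)$: given $(h_n)\subseteq B_{C_0(L)}$ with $\|Sh_n\|\to 1$, I would use the concentration of $S$ at $t_0$ together with $\C$-uniform convexity to show the non-peak part of $Sh_n$ vanishes in the limit, forcing $|h_n(t_0)|\to 1$ and hence $Sh_n\to\lambda\,Sg$ along a subsequence with $\lambda=\lim_n h_n(t_0)\in\T$. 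The main obstacle, as I see it, is that $h$ ranges over all of $B_{C_0(L)}$, which contains wildly oscillating functions, so a naive localization of $T$ near $t_0$ is uncontrollable in operator norm; the construction must simultaneously be \emph{small in norm}, \emph{norm-attaining}, and \emph{uniquely norming up to rotation}, and it is the quantitative interplay of these three requirements against $w_c(\delta)$ and the tolerance $\eta(\eps)$ that I expect to demand the most care, essentially redoing Acosta's estimates while tracking the extra exposition needed for RSE.
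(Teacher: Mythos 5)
Your necessity argument is correct and coincides with the paper's: a proper non-empty clopen subset $U\subsetneq L$ gives the isometric decomposition $C_0(L)=C_0(U)\oplus_\infty C_0(L\setminus U)$ with both summands non-trivial, and Theorem~\ref{prop:M summand} then forces $Y$ to be $\C$-uniformly convex.

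The sufficiency direction, however, has a genuine gap. You correctly diagnose why Corollary~\ref{cor:strBPBp} is unavailable, but the replacement you propose — an operator $S$ ``concentrated at $t_0$'' whose norm is governed by the peak value $h(t_0)$, for which you then verify membership in $\RSE(C_0(L),Y)$ directly — is not viable, and it misses the actual engine of the proof. The paper's argument rests on two external ingredients you never invoke: first, every operator from $C_0(L)$ into a $\C$-uniformly convex space is weakly compact (Proposition~\ref{prop: weakly compact C_0(L)}); second, weakly compact operators from $C(K)$ can be approximated by weakly compact RSE operators \cite[Theorem~3.15]{CDFJM2025}. Following Acosta's construction, one restricts $T$ to $C(B)$ for a suitable compact $B\subseteq L$, applies the second fact to obtain $R_2\in\RSE(C(B),Y)$ with $R_2(h_1)\in\str(\overline{R_2(B_{C(B)})})$, and then defines
\begin{equation*}
    S(f)=R_2^{**}\bigl((f\chi_{K_1})|_B\bigr)+\lambda_0 f(t_0)\,R_2^{**}\bigl(h_2\chi_{B\setminus K_1}\bigr).
\end{equation*}
The decisive observation is that $S(B_{C_0(L)})\subseteq\overline{R_2(B_{C(B)})}$, so $Sf_3=R_2(h_2)$ is strongly exposed in $\overline{S(B_{C_0(L)})}$; one then concludes with Proposition~\ref{prop: aproximate NA by RSE or ASE}. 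In particular $S$ itself is never shown to be RSE — only a further rank-one perturbation of it is.

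Against this, your plan fails at two concrete points. (a) An operator whose norm behaviour is controlled by evaluation at the single point $t_0$ is essentially rank one on its norming set, and such an $S$ cannot in general satisfy $\|S-T\|<\eps$ for an arbitrary norm-one $T$; note that in the displayed formula the first term depends on the full restriction of $f$ to the compact set $K_1$, not on $f(t_0)$, precisely to avoid this. (b) Even granting that $\|Sh_n\|\to1$ forces $|h_n(t_0)|\to1$ along a subsequence, the definition of $\RSE$ requires convergence of the \emph{images} $Sh_n\to\lambda Sg$, and nothing in your sketch produces it: that convergence is exactly what the strong exposition of the image point supplies, and that exposition comes from the RSE approximation theorem for $C(B)$, not from the modulus $w_c(\delta)$ alone. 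Without identifying and using that ingredient, the sufficiency direction remains unproved.
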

The condition that $Y$ is $\C$-uniformly convex is necessary when $L$ is not connected is a consequence of Theorem \ref{prop:M summand}, as in this case $C_0(L)$ has non-trivial $M$-summands. Indeed, $C_0(L)$ has an M-summand if and only if there is a subset $S\subset L$ which is open and closed in $L$ (see \cite[Example~1.4(a)]{M-ideals}). 
Let us point out the following interesting consequence of Theorem \ref{teo:C_0(L)}.

\begin{cor}\label{teo:L_infty char}
    Let $Y$ be a complex Banach space, $(\Omega, \Sigma, \mu)$ any measure space and $m\in \N, m\geq 2$. The following are equivalent.
   \begin{itemize}
       \item[i)] The pair $(\ell_\infty^m, Y)$ has the BPBp-RSE.
       \item[ii)] The pair $(c_0, Y)$ has the BPBp-RSE.
       \item[iii)] The pair $(\ell_\infty, Y)$ has the BPBP-RSE.
       \item[iv)] The pair $(L_\infty(\mu), Y)$ has the BPBp-RSE.
       \item[v)] $Y$ is $\C$-uniformly convex.
   \end{itemize}
\end{cor}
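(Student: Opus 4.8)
The plan is to prove the equivalence by establishing a full cycle: (v) implies each of (i)--(iv), and each of (i)--(iv) in turn implies (v). The two engines are already in place, namely Theorem \ref{teo:C_0(L)}, which gives the sufficiency of $\C$-uniform convexity for domains of the form $C_0(L)$, and Theorem \ref{prop:M summand}, which gives its necessity whenever the domain splits as a non-trivial $\oplus_\infty$-sum. The entire argument then reduces to recognising each of the four domain spaces simultaneously as a $C_0(L)$-space and as a space carrying a non-trivial $M$-summand.

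For the implications $(\mathrm{v}) \Rightarrow (\mathrm{i})$--$(\mathrm{iv})$, I would first note that the BPBp-RSE of a pair depends only on the isometric structure of the domain, so it suffices to identify each space, up to isometric isomorphism, with some $C_0(L)$ for $L$ locally compact Hausdorff. Here $\ell_\infty^m = C(\{1,\dots,m\})$ and $c_0 = C_0(\N)$ with the discrete topology, $\ell_\infty = C(\beta\N)$ is the space of continuous functions on the Stone--\v{C}ech compactification of $\N$, and $L_\infty(\mu)$, being a commutative unital C$^*$-algebra (complex conjugation as involution, essential supremum as norm), is isometrically $C(K)$ for $K$ its maximal ideal space by the Gelfand--Naimark theorem. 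In every case $L$ is locally compact Hausdorff, so Theorem \ref{teo:C_0(L)} yields the BPBp-RSE of $(C_0(L), Y)$ from the $\C$-uniform convexity of $Y$, covering all four implications at once.

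For the reverse implications I would exhibit an explicit non-trivial $M$-summand in each domain and apply Theorem \ref{prop:M summand}. Since $m \geq 2$ we may write $\ell_\infty^m = \ell_\infty^1 \oplus_\infty \ell_\infty^{m-1}$, and by splitting off the first coordinate $c_0 = \K \oplus_\infty c_0$ and $\ell_\infty = \K \oplus_\infty \ell_\infty$; for $L_\infty(\mu)$, the standing assumption $\dim L_\infty(\mu) \geq 2$ produces a measurable set $A$ with $\chi_A$ and $\chi_{A^c}$ both nonzero in $L_\infty(\mu)$, whence $L_\infty(\mu) = \chi_A L_\infty(\mu) \oplus_\infty \chi_{A^c} L_\infty(\mu)$. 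As both summands are non-trivial, Theorem \ref{prop:M summand} forces $Y$ to be $\C$-uniformly convex, closing the cycle. The only step demanding genuine care is the identification of $\ell_\infty$ and $L_\infty(\mu)$ as $C(K)$-spaces, which rests on the classical representation theorems (Stone--\v{C}ech, respectively Gelfand--Naimark); once these are invoked, both directions are immediate from the two quoted theorems, and checking that each domain carries a non-trivial $M$-summand is elementary.
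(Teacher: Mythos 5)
Your proposal is correct and follows essentially the same route as the paper: the sufficiency of $\C$-uniform convexity comes from identifying each domain isometrically with a $C_0(L)$-space and invoking Theorem \ref{teo:C_0(L)}, while the necessity comes from exhibiting a non-trivial $M$-summand in each domain and applying Theorem \ref{prop:M summand}. The representation of $\ell_\infty$ and $L_\infty(\mu)$ as $C(K)$-spaces and the $\oplus_\infty$-decompositions you give are exactly the standard facts the paper relies on.
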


In order to prove Theorem \ref{teo:C_0(L)}, it only remains to show that $\C$-uniform convexity of $Y$ is sufficient using an argument based on the proof of \cite[Theorem~2.4]{A2016}.
For this, we will need some previous results. The first lemma is well known and its proof is straightforward.
\begin{lem}\label{lem:complex numbers}
    Let  $\lambda, \omega\in\C$ such that $|\lambda|, |\omega| \leq 1$ and $t \in (0,1)$. If $\re \omega \lambda >1-t,$ then $|\omega-\bar{\lambda}|<\sqrt{2t}.$
\end{lem}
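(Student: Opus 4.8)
The plan is to prove this by a single direct computation of the squared modulus $|\omega - \bar{\lambda}|^2$, converting the problem into a comparison of real quantities. The only genuine insight needed is that the mixed term appearing in the expansion is exactly $2\re(\omega\lambda)$, which is precisely the quantity controlled by the hypothesis.

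First I would expand the square using $|z|^2 = z\bar z$:
\begin{equation*}
|\omega - \bar{\lambda}|^2 = (\omega - \bar{\lambda})(\bar{\omega} - \lambda) = |\omega|^2 + |\lambda|^2 - \omega\lambda - \bar{\omega}\bar{\lambda}.
\end{equation*}
The key observation is that $\bar{\omega}\bar{\lambda} = \overline{\omega\lambda}$, so $\omega\lambda + \bar{\omega}\bar{\lambda} = \omega\lambda + \overline{\omega\lambda} = 2\re(\omega\lambda)$, which yields the clean identity
\begin{equation*}
|\omega - \bar{\lambda}|^2 = |\omega|^2 + |\lambda|^2 - 2\re(\omega\lambda).
\end{equation*}

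Then I would simply insert the two hypotheses. Since $|\omega| \leq 1$ and $|\lambda| \leq 1$, we have $|\omega|^2 + |\lambda|^2 \leq 2$; and since $\re(\omega\lambda) > 1 - t$, we have $-2\re(\omega\lambda) < -2 + 2t$. Adding these two estimates gives
\begin{equation*}
|\omega - \bar{\lambda}|^2 < 2 - 2 + 2t = 2t,
\end{equation*}
and taking square roots produces the desired bound $|\omega - \bar{\lambda}| < \sqrt{2t}$.

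There is no real obstacle here, as the statement explicitly notes that the proof is straightforward; the argument is purely algebraic and uses only the definition of the modulus. The one point worth stating carefully is the identification of the cross term with $2\re(\omega\lambda)$, since this is what links the expansion to the hypothesis and is the whole content of the lemma.
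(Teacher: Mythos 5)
Your computation is correct: the identity $|\omega-\bar{\lambda}|^2=|\omega|^2+|\lambda|^2-2\re(\omega\lambda)$ together with the bounds $|\omega|,|\lambda|\leq 1$ and $\re(\omega\lambda)>1-t$ immediately gives $|\omega-\bar{\lambda}|^2<2t$. The paper omits the proof as "straightforward," and your argument is exactly the standard one it has in mind.
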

In \cite[Theorem~B]{S1983} it was proved that every weakly compact operator whose domain is $C(K)$ can be approximated by norm-attaining weakly compact operators in the real case, and in \cite[Theorem~2]{ACKP1998} this result was generalized to $C_0(L)$ in both real and complex case. This result was recently improved in \cite[Theorem~3.15]{CDFJM2025} by showing that the approximation can be made using RSE operators. This will be a key step in the proof of Theorem \ref{teo:C_0(L)} and, in order to use it, we will take advantage of the following well-known fact whose proof can be found in \cite[Proposition~2.2]{A2016}, for example. 
\begin{prop}[\mbox{\cite[Proposition~2.2]{A2016}}]\label{prop: weakly compact C_0(L)}
    Let $Y$ be a $\C$-uniformly convex Banach space and $L$ any locally compact Hausdorff space. Then, every operator from $C_0(L)$ to $Y$ is weakly compact.
\end{prop}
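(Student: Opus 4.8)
The plan is to reduce the statement to the classical theorem of Pe\l czy\'nski that every operator from a $C_0(L)$-space into a Banach space $Z$ is weakly compact whenever $Z$ contains no isomorphic copy of $c_0$ (equivalently, since $C_0(L)$ has Pe\l czy\'nski's property (V), it suffices to show that every operator into $Y$ is unconditionally converging). Thus the entire content is the following isometric-to-isomorphic fact, which is where $\C$-uniform convexity is actually used: \emph{if $Y$ is $\C$-uniformly convex, then every weakly unconditionally Cauchy (WUC) series in $Y$ converges unconditionally} (so, by Bessaga--Pe\l czy\'nski, $Y$ contains no copy of $c_0$).

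To prove this lemma I would argue by contradiction. If a WUC series $\sum_n y_n$ fails to converge unconditionally, the complex Cauchy criterion gives $\eps>0$, successive finite blocks $F_1<F_2<\cdots$ and unimodular coefficients so that the block vectors $z_k=\sum_{n\in F_k}w_n^{(k)}y_n$ satisfy $\|z_k\|\ge\eps$. Since the $F_k$ are disjoint and $\sum_n y_n$ is WUC with bound $M:=\sup\{\|\sum_{n\in F}w_ny_n\|: F \text{ finite},\ w_n\in\T\}<\infty$, the blocks inherit $\|\sum_{k\in G}\lambda_k z_k\|\le M$ for every finite $G$ and every $\lambda_k\in\T$; in particular all partial sums stay of norm at most $M$.

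The quantitative input is that, by homogeneity, the definition of $\delta_\C$ yields $\sup_{w\in\T}\|u+wv\|\ge\|u\|\bigl(1+\delta_\C(\|v\|/\|u\|)\bigr)$ for all nonzero $u,v\in Y$. I would then build partial sums greedily: assuming signs have been fixed so that $S_{k-1}=\sum_{j<k}z_j$ satisfies $\eps\le\|S_{k-1}\|\le M$, choose $w_k\in\T$ with $\|S_{k-1}+w_kz_k\|\ge\|S_{k-1}\|+\|S_{k-1}\|\,\delta_\C(\|z_k\|/\|S_{k-1}\|)$ and absorb $w_k$ into $z_k$. This is exactly the point where the isometric/isomorphic tension is resolved: because $\|S_{k-1}\|\in[\eps,M]$ and $\|z_k\|\ge\eps$, we have $\|z_k\|/\|S_{k-1}\|\ge\eps/M$, so monotonicity of $\delta_\C$ makes each increment at least the fixed constant $\rho:=\eps\,\delta_\C(\eps/M)>0$, where positivity is precisely $\C$-uniform convexity. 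Hence $\|S_K\|\ge\eps+(K-1)\rho\to\infty$, contradicting $\|S_K\|\le M$, and the lemma follows.

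Finally I would close the argument. Given $T\in\ELL(C_0(L),Y)$ and any WUC series $\sum_n f_n$ in $C_0(L)$, the image series $\sum_n Tf_n$ is WUC in $Y$, since $\sum_n|y^*(Tf_n)|=\sum_n|(T^*y^*)(f_n)|<\infty$ for every $y^*\in Y^*$ because $T^*y^*\in C_0(L)^*$; by the lemma it converges unconditionally, so $T$ is unconditionally converging, and property (V) of $C_0(L)$ gives weak compactness. I expect the main obstacle to be the quantitative iteration in the lemma: specifically, ensuring the ratios $\|z_k\|/\|S_{k-1}\|$ remain bounded away from $0$, which is what converts the merely positive (and possibly non-power-type) modulus $\delta_\C$ into a uniform per-step gain and thereby upgrades the isometric convexity hypothesis into an isomorphic exclusion of $c_0$.
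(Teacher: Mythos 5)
Your proposal is correct. Note first that the paper itself offers no proof of this proposition --- it is quoted verbatim from \cite[Proposition~2.2]{A2016} --- so the comparison is with Acosta's argument, which follows the same overall strategy: reduce via Pe\l czy\'nski's theorem (every non-weakly-compact operator out of $C_0(L)$ fixes a copy of $c_0$, equivalently property (V)) to showing that a $\C$-uniformly convex space contains no isomorphic copy of $c_0$. Your way of carrying out the second half --- showing directly that every WUC series converges unconditionally by a greedy choice of unimodular signs, with the per-step gain $\|S_k\|\ge\|S_{k-1}\|\bigl(1+\delta_\C(\|z_k\|/\|S_{k-1}\|)\bigr)$ and the ratio pinned in $[\eps/M,\infty)$ --- is a clean, self-contained, quantitative route that avoids invoking James's distortion theorem for $c_0$ (which one would otherwise need, since an isomorphic copy of $c_0$ is a priori only isomorphic, not almost isometric). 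The only step you use without justification is the monotonicity of $\delta_\C$; this does hold, because for fixed $x,y\in S_Y$ the map $t\mapsto\sup_{\lambda\in\T}\|x+\lambda t y\|$ is convex, equals $1$ at $t=0$, and is everywhere $\ge 1$ (average over $\pm\lambda$), hence is nondecreasing on $[0,\infty)$, and an infimum of nondecreasing functions is nondecreasing. With that one-line addition the argument is complete.
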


Now, let us state some notation. Given a locally compact Hausdorff space $L$, $\mathcal{B}(L)$ will denote the space of Borel measurable and bounded complex valued functions defined on $L$, endowed with the supremum norm, which can be identified with a subspace of $C_0(L)^{**}$ by Riesz Theorem. If $B\subset L$ is a Borel measurable set, then we denote by $P_B\colon \mathcal{B}(L) \longrightarrow \mathcal{B}(L)$ the projection given by $P_B(f)=f\chi_B$ (we need to go to the bidual of $C_0(L)$ to consider the projection $f\chi_B$ of a function $f\in C_0(L)$, and then we can apply $T^{**}$ to this function). 

We are ready to prove that the condition in Theorem \ref{teo:C_0(L)} is sufficient for any locally compact Hausdorff space $L$.
\begin{proof}[Proof of Theorem \ref{teo:C_0(L)}]
    The purpose of this proof is to show that, in \cite[Theorem 2.4]{A2016}, the operator $S\in \NA(C_0(L), Y)$ and the point $f_3\in S_{C_0(L)}$ where it attains its norm can be chosen so that $Sf_3$ is a strongly exposed point of $\overline{S(B_{C_0(L)})}$, and therefore the pair satisfies the BPBp-RSE. We include the argument for the sake of completeness.   
    
    Let $0<\eps<1$ and take $\eta>0$ and $s>0$ as in \cite[Theorem 2.4]{A2016}. Let $T\in S_{\ELL(C_0(L), Y)}$ and $f_0\in S_{C_0(L)}$ be such that
\begin{equation*}
    \|Tf_0\|>1-s.
\end{equation*}
Choose $y_1^*\in S_{Y^*}$ such that
\begin{equation*}\label{eq:mu_1}
    \re y_1^*(Tf_0)=\|Tf_0\|>1-s.
\end{equation*}
By Riesz theorem we can identify $C_0(L)^*\equiv M(L),$ the space of Borel regular complex measures on $L$, and write $\mu_1=T^*y_1^*\in M(L).$ 

The idea of the proof is the following. We apply the proof of \cite[Theorem 2.4]{A2016} so that we can find an appropriate compact set $B\subseteq L$; then we apply \cite[Theorem~3.15]{CDFJM2025} to $C(B)$ to obtain an operator $S_2\in\RSE(C(B), Y)$ which is close to the restriction of $T$ to $C(B)$ (this restriction is defined using $T^{**}P_B$). Next, we extend $S_2$ to a new operator $S$ defined on $C_0(L)$ which attains its norm at some function $f_3$ such that $\|f_0-f_3\|<\eps$. Moreover, we show that $Sf_3\in \str(\overline{S(B_{C_0(L)})})$, so we can apply Proposition \ref{prop: aproximate NA by RSE or ASE} to approximate $S$ by an operator in $\RSE(C_0(L), Y)\cap S_{\ELL(C_0(L),Y)}$ which also attains its norm at $f_3,$ finishing the proof.

Let $g_1=\frac{d\mu_2}{d|\mu_2|}\in \mathcal B(L)$ and define $\tilde{S}\in\ELL(C_0(L), Y)$ by
\begin{equation*}
    \tilde{S}(f)=T^{**}(f\chi_B)+\eps_1 y_1^*(T^{**}(f\chi_B))T^{**}(\bar{g}_1\chi_B), f\in C_0(L),
\end{equation*}
where $\eps_1=\frac{1}{6}\frac{\delta(\frac{\eps}{9})}{1+\delta(\frac{\eps}{9})}$ and $B\subseteq L$ is an appropriate compact set obtained like in the proof of \cite[Theorem 2.4]{A2016} and satisfying that the restriction $g_1|_B$ is continuous and 
\begin{equation}\label{eq: f-g_1}
    \|(f_0-\bar{g}_1)\chi_B\|_\infty \leq \frac{\eps}{12}.
\end{equation}

We have that $\tilde{S}^{**}=\tilde{S}^{**}P_B$ and it can be shown as in \cite[Theorem 2.4]{A2016} that
\begin{equation*}\label{eq:estimate tilde S}
    1\leq 1-\eta+\eps_1(1-\eta)^2\leq \|\tilde{S}\|\leq 1+\eps_1,
\end{equation*}
so $|1-\|\tilde{S}\||\leq \eps_1.$ Now we extend $\tilde{S}$ to $C(B)\subseteq \mathcal{B}(L)$ using the biadjoint. For every $h\in C(B)$ we denote by $h\chi_B\in \mathcal{B}(L)$ the natural extension to $L$. Let $S_1\in \ELL(C(B), Y)$ be the operator given by
\begin{equation*}
    S_1(h)=\tilde{S}^{**}(h\chi_B) \in Y,
\end{equation*}
which is weakly compact by Proposition \ref{prop: weakly compact C_0(L)} and satisfies $\|S_1\|=\|\tilde{S}\|$ as $\tilde{S}^{**}=\tilde{S}^{**}P_B.$ Applying \cite[Theorem~3.15]{CDFJM2025}, there exists an operator $S_2\in\RSE(C(B),Y)$ and $h_1\in S_{C(B)}$ such that
\begin{equation*}\label{eq:S_2}
    \|\tilde{S}\|=\|S_2\|=\|S_2(h_1)\| \ \text{ and } \ \|S_2-S_1\|<\frac{\eps\eta}{2}.
\end{equation*}
By Lemma \ref{lem:str-exp}, $S_2(h_1)\in\str(\overline{S_2(B_{C(B)})})$. Choose $y_2^*\in S_{Y^*}$ such that
\begin{equation*}\label{eq:y_2}
    y_2^*(S_2(h_1))=\|S_2\|.
\end{equation*}
By rotating $h_1$ and $y_2^*$ if needed we can assume that $y_1^*(T^{**}(h_1\chi_B))\in \R_0^+.$

Write $R_2=\frac{S_2}{\|S_2\|}\in\RSE(C(B),Y)$ and $\mu_2=R_2^*y_2^*\in M(B).$ We have that $R_2(h_1)\in \str(\overline{R_2(B_{C(B)})})$ and 
\begin{equation}\label{eq:y_2 R_2}
    y_2^*(R_2(h_1))=\mu_2(h_1)=1=\|\mu_2\|=|\mu_2|(B).
\end{equation} 
By the proof of \cite[Theorem 2.4]{A2016} we also have that 
\begin{equation}\label{eq:y_2R_2g_1}
\begin{split}
    \re y_2^*(R_2(\bar{g_1}|_B))>1-6\eta -2\frac{\eta}{\eps_1}-\eps\eta.
\end{split}
\end{equation}

Let $g_2=\frac{d\mu_2}{d|\mu_2|}$, and we can assume that $|g_2|=1.$ Consider the measurable set
\begin{equation*}
    C=\{t\in B: \re(\bar{g_1}(t)+h_1(t))g_2(t)>2-\beta\},
\end{equation*}
where $\beta=\frac{\eps^2}{2\cdot 12^2}.$ Using \eqref{eq:y_2 R_2} and \eqref{eq:y_2R_2g_1} it can be shown that
\begin{equation*}\label{eq: mu B-C}
    |\mu_2|(B\backslash C)\leq \frac{6\eta + 2\frac{\eta}{\eps_1}+\eps\eta}{\beta}.
\end{equation*}
On the other hand, by Lemma \ref{lem:complex numbers} we also have
\begin{equation*}
    \|(g_1-g_2)\chi_C\|_\infty \leq \sqrt{\beta}=\frac{\eps}{12} \text{ and } \|(h_1-\bar{g_2})\chi_C\|_\infty \leq \sqrt{2\beta}=\frac{\eps}{12}.
\end{equation*}
This together with \eqref{eq: f-g_1} yields
\begin{equation*}\label{eq:h_1-f_0}
\begin{split}
    \|( &h_1-f_0)\chi_C\|_\infty
    \leq \frac{\eps}{4}.
\end{split}
\end{equation*}

By the inner regularity of $\mu_2$ there is a compact set $K_1\subset C$ such that
\begin{equation*}\label{eq:mu C-K}
    |\mu_2|(C\backslash K_1)<\frac{\eta\eps}{2}.
\end{equation*}
By the proof of \cite[Theorem 2.4]{A2016}, $K_1\neq \emptyset$ and $\|R_2^{**}(P_B-P_{K_1})\|\leq \frac{\eps}{9}.$

Denote by $T_2\in\ELL(C_0(L),Y)$ the operator defined by
\begin{equation*}
    T_2(f)=R_2(f|_B), \ f\in C_0(L).
\end{equation*}
It is satisfied that $\|T_2^{**}(I-P_{K_1})\|=\|R_2^{**}(P_B-P_{K_1})\|$ and $T_2^{**}(P_B-P_{K_1})=T_2^{**}(I-P_{K_1})P_B$ so
\begin{equation*}
    \|T_2^{**}(P_B-P_{K_1})\|\leq \frac{\eps}{9}.
\end{equation*}
Also define $R:C(B)\longrightarrow Y$ by $R(f)=T^{**}(f\chi_B)$ for every $f\in C(B).$ We have
\begin{equation*}
    \|(T_2^{**}-T^{**})P_B\|=\|R_2-R\|
\end{equation*}
By the definition of $S_1$ we have that 
\begin{equation*}
    \|S_1-R\|\leq \eps_1.
\end{equation*}

Now, we are going to use $R_2$ and $h_1$ to
define an operator $S\in \ELL(C_0(L), Y)$ and a point $f_3\in C_0(L)$ which satisfy the conditions of the BPBp, and we will show that $S(f_3)\in\str(\overline{S(B_{C_0(L)})})$, so the BPBp-RSE is also satisfied.

As $K_1\neq \emptyset,$ there exists $t_0\in K_1\subset C$ and $|h_1(t_0)|>1-\beta >1-\frac{\eps}{2}$, and there is an open set $V\subset B$ such that $t_0\in V\subseteq \{t\in B : |h_1(t)|>1-\frac{\eps}{2}\}$ and a function $v\in C(B)$ satisfying $v(B)\subseteq[0,1], v(t_0)=1$ and $\supp v\subseteq V.$ So we can define $h_i\in C(B)$, $i=2,3$ by
\begin{equation*}
    h_2(t)=h_1(t)+v(t)(1-|h_1(t)|)\frac{h_1(t)}{|h_1(t)|}, \ t\in B
\end{equation*}
and
\begin{equation*}
    h_3(t)=h_1(t)-v(t)(1-|h_1(t)|)\frac{h_1(t)}{|h_1(t)|}, \ t\in B.
\end{equation*}
Clearly $h_i\in B_{C(B)}$ for $i=2,3$ and $h_1=\frac{1}{2}(h_2+h_3)$. As $R_2\in \RSE(C(B), Y)$ and $\|R_2(h_1)\|=1=\|R_2\|,$ by Lemma \ref{lemma:easy-lemma-to-kill-propertyRSE-B} 
\begin{equation}\label{eq:R_2(h_2)}
R_2(h_2)=R_2(h_3)=R_2(h_1)\in\str(\overline{R_2(B_{C(B)}}).
\end{equation}
Also we have that $|h_2(t_0)|=1.$ As $\supp v\subseteq V\subseteq \{t\in B: |h_1(t)|<1-\frac{\eps}{2} \},$ for $t\in V$ we have
\begin{equation*}
    |h_2(t)-h_1(t)|\leq 1-|h_1(t)|<\frac{\eps}{2}.
\end{equation*}
For $t\in B\backslash V, h_2(t)=h_1(t)$ so $\|h_2-h_1\|<\frac{\eps}{2}$ and we obtain
\begin{equation*}
\begin{split}
    \|h_2-f_0|_C\|\leq \frac{3\eps}{4}.
\end{split}
\end{equation*}

As $B\subseteq L$ is compact, there exists a function $f_2\in C_0(L)$ that extends $h_2$ to $L$ (see, for example, \cite[Corollary 9.15, Theorem 12.4]{Jameson} and \cite[Theorems 17 and 18]{Kelley}). Considering the function $\Phi: \C\longrightarrow \C$ given by $\Phi(z)=z$ if $|z|\leq 1$ and $\Phi(z)=\frac{z}{|z|}$ if $|z|>1,$ which is continuous, by using $\Phi\circ f_2$ instead of $f_2$ if necessary we can assume that $f_2\in S_{C_0(L)}$ (as $h_2\in S_{C(B)}$, $\Phi\circ f_2$ is still an extension of $h_2$). There is an open set $G\subset L$ such that $K_1\subset G$ and 
\begin{equation}\label{eq:f2-f0}
    \|(f_2-f_0)\chi_G\|_\infty <\frac{7\eps}{8}.
\end{equation}
By Urysohn's lemma, there exists $u\in C_0(L)$ such that $u(L)\subset [0,1], u|_{K_1}=1$ and $\supp u \subseteq G$. Define $f_3\in B_{C_0(L)}$ by
\begin{equation*}
    f_3=uf_2+(1-u)f_0 
\end{equation*}
Observe that
\begin{equation}\label{eq: f_3=h_2}
    f_3(t)=f_2(t)=h_2(t) \ \forall t\in K_1, \ f_3(t)=f_0(t) \ \forall t\in L\backslash G
\end{equation}
and
\begin{equation*}
    |f_3(t)-f_0(t)|=u(t)|f_2(t)-f_0(t)|, \ \forall t\in G\backslash K_1.
\end{equation*}
Therefore, using \eqref{eq:f2-f0}
\begin{equation*}
    \|f_3-f_0\|<\eps.
\end{equation*}

Write $\lambda_0=\overline{h_2(t_0)}\in \T.$ Define the operator $S\in \ELL(C_0(L), Y)$ by
\begin{equation*}
    S(f)=R_2^{**}((f\chi_{K_1})|_B)+\lambda_0f(t_0)R_2^{**}(h_2\chi_{B\backslash K_1}), f\in C_0(L).
\end{equation*}
$S$ is well defined as $R_2$ is weakly compact. For every $f\in B_{C_0(L)}$ we have that $|\lambda_0f(t_0)|\leq 1$ and therefore
\begin{equation*}
\|(f\chi_{K_1})|_B+\lambda_0f(t_0)h_2\chi_{B\backslash K_1}\|_\infty \leq 1.
\end{equation*}
This implies that $S(B_{C_0(L)})\subseteq R_2^{**}(B_{\mathcal{B}(B)})\subseteq \overline{R_2(B_{C(B)})}$ (the second inclusion being a consequence of the weak compactness of $R_2$), so $\|S\|\leq 1$ and $$\overline{S(B_{C_0(L)}) }\subseteq\overline{R_2(B_{C(B)})}.$$

It is also satisfied that
\begin{equation*}
\begin{split}
    S(f_3)&=R_2^{**}((f_3\chi_{K_1})|_B)+\lambda_0f_3(t_0)R_2^{**}(h_2\chi_{B\backslash K_1})\\
    &=R_2^{**}(h_2) \ [\text{by \eqref{eq: f_3=h_2}}]\\
    &=R_2(h_2).
\end{split}
\end{equation*}
Therefore, by \eqref{eq:R_2(h_2)} we have that $\|S(f_3)\|=\|R_2(h_2)\|=1=\|S\|.$ Moreover, $S(f_3)=R_2(h_2)\in \str(\overline{S(B_{C_0(L)})})$ as it is a strongly exposed point of a set which contains  $\overline{S(B_{C_0(L)})}$. Applying Proposition \ref{prop: aproximate NA by RSE or ASE}, $S$ can be approximated by an operator in $\RSE(C_0(L), Y)$ with norm 1 which attains its norm at $f_3$. As we can show applying the proof of \cite[Theorem 2.4]{A2016} that $\|S-T\|<\eps$, we have finished. 
\end{proof}

\subsubsection{When the domain is $c_0(X)$}
In particular, the previous result applies when the domain is $c_0$ or $\ell_\infty^m.$ This can be extended to any $c_0$-sum and to any finite $\ell_\infty$-sum of copies of a uniformly convex Banach space, improving \cite[Theorem~2.4]{CK2022}.

\begin{theorem}\label{teo:c_0 sum}
    Let $X, Y$ be complex Banach spaces such that $X$ is uniformly convex. The following are equivalent.
    \begin{itemize}
        \item[i)] The pair $(c_0(X), Y)$ has the BPBp-RSE.
        \item[ii)] The pair $(\ell_\infty^m(X), Y)$ has the BPBp-RSE.
        \item[iii)] $Y$ is $\C$-uniformly convex.
    \end{itemize}
\end{theorem}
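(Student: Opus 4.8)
The plan is to prove the four implications (i)$\Rightarrow$(iii), (ii)$\Rightarrow$(iii), (iii)$\Rightarrow$(ii) and (iii)$\Rightarrow$(i); the two implications toward (iii) are immediate, while the two out of (iii) carry the real content. For necessity, I would note that both domains carry a non-trivial $M$-summand decomposition obtained by splitting off the first coordinate, namely $c_0(X)=X\oplus_\infty c_0(X)$ and (for $m\geq 2$) $\ell_\infty^m(X)=X\oplus_\infty\ell_\infty^{m-1}(X)$, in which both summands are non-trivial. Theorem \ref{prop:M summand} then applies directly and shows that either (i) or (ii) forces $Y$ to be $\C$-uniformly convex.

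For sufficiency, assume $Y$ is $\C$-uniformly convex, and let $T\in\ELL(c_0(X),Y)$ with $\|T\|=1$ and $u=(x_j)_j\in S_{c_0(X)}$ satisfy $\|Tu\|>1-\eta(\eps)$. First I would exploit the duality $c_0(X)^*=\ell_1(X^*)$: choosing $y^*\in S_{Y^*}$ with $\re y^*(Tu)=\|Tu\|$ and writing $T^*y^*=(\phi_j)_j$ with $\sum_j\|\phi_j\|\leq 1$, the estimate $\re\sum_j\phi_j(x_j)>1-\eta$ forces $\|\phi_j\|-\re\phi_j(x_j)<\eta$ for every $j$ and concentrates almost all the mass of $(\phi_j)_j$ on finitely many coordinates. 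Since $X$ is uniformly convex, on every coordinate where $\|\phi_j\|$ is bounded below this pins $x_j$ near the unique point at which $\phi_j/\|\phi_j\|$ attains its norm, and there the pair $(X,Y)$ already has the BPBp-ASE (as $X$ is uniformly convex). Together with $\|x_j\|\to 0$, this lets me confine the construction to a finite block of coordinates, so that the $c_0$-sum (i) reduces to a finite $\ell_\infty$-sum as in (ii); when $X=\K$ the whole statement collapses to Theorem \ref{teo:C_0(L)} for $L=\N$.

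The heart of the matter, and the step I expect to be the main obstacle, is to manufacture a single operator $S$ that attains its norm at some $v$ with $\|u-v\|<\eps$ and, crucially, such that $Sv$ is a strongly exposed point of $\overline{S(B_{c_0(X)})}$; once this is in place, Proposition \ref{prop: aproximate NA by RSE or ASE} upgrades $S$ to a range strongly exposing operator by a rank-one perturbation attaining its norm at the same $v$. Coordinatewise exposure does not suffice, because $S(B_{c_0(X)})$ is the sum-set over all coordinates and not the image of a single copy of $X$, and this is exactly where $\C$-uniform convexity of $Y$ enters, just as in the proofs of Theorem \ref{prop:M summand} and Theorem \ref{teo:C_0(L)}. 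Concretely, I would perturb $T$ by a small rank-one operator sharpening the functional $y^*$, with the size of the perturbation governed by the modulus $\delta_\C$ (mirroring the role of $\eps_1$ in the $C_0(L)$ argument), so that moving $v$ in the remaining coordinate directions strictly lowers the image norm; Lemma \ref{lemma:easy-lemma-to-kill-propertyRSE-B} then pushes those directions into $\ker S$ and confines the exposed face to the dominant coordinate, where the uniform convexity of $X$ already provides strong exposure. The delicate points will be keeping $\|S\|$ and $\|S-T\|$ under control after this perturbation and verifying strong exposedness of $Sv$ in the full sum-set uniformly in $\eps$; for general uniformly convex $X$ this amounts to running the classical BPBp construction of \cite[Theorem~2.4]{CK2022} while additionally tracking this exposure property.
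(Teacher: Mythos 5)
Your necessity argument is exactly the paper's: split off one coordinate to get a non-trivial $M$-summand decomposition and invoke Theorem \ref{prop:M summand} (with $m\geq 2$ for the finite sums, consistent with the paper's standing convention that all spaces have dimension at least two). The opening of your sufficiency argument also matches the paper: concentration of the mass of $T^*y_0^*$ on a finite set $A$ of coordinates via the $\ell_1(X^*)$ duality (Lemma \ref{lem:convex series to T^*y^*}) and the estimate $\|T(I-P_A)\|<\eps/16$ coming from $\C$-uniform convexity of $Y$ (Lemma \ref{lem:TP_A}).

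The gap is precisely at the step you flag as the main obstacle, and the mechanism you sketch for it is not the one that works. You propose to perturb $T$ so that $Sv$ becomes a strongly exposed point of $\overline{S(B_{c_0(X)})}$, citing Lemma \ref{lemma:easy-lemma-to-kill-propertyRSE-B} to ``push the remaining coordinate directions into $\ker S$''; but that lemma is a \emph{consequence} of an operator already being RSE, not a device for producing strong exposedness, and you give no actual construction certifying the exposure of $Sv$ in the full image. The paper avoids this issue entirely by a factorization trick you do not identify: since $X$ is uniformly convex, $\ell_\infty^A(X)$ has the RNP and $\str(B_{\ell_\infty^A(X)})$ consists exactly of the points all of whose coordinates have norm one (Lemma \ref{lem:strongly exposed in ell_infty}); Bourgain's theorem then produces $\tilde V\in\ASE(\ell_\infty^A(X),Y)$ of norm one attaining its norm at such a point $P_A(z_0)$ and close to the (suitably corrected) restriction of $T$. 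The final operator is $V=\tilde V\circ P_A$, and its RSE property is then a one-line formal verification: if $\|Vx_n\|=\|\tilde VP_Ax_n\|\to 1$, the ASE property of $\tilde V$ gives $\theta_nP_Ax_n\to P_A(z_0)$ for some $\theta_n\in\T$, hence $\theta_nVx_n\to Vz_0$. (Equivalently, one could note $\overline{V(B_{c_0(X)})}\subseteq\overline{\tilde V(B_{\ell_\infty^A(X)})}$ and finish with Lemma \ref{lem:str-exp} and Proposition \ref{prop: aproximate NA by RSE or ASE}, as in the $C_0(L)$ proof, but the factorization makes even that unnecessary.) The remaining work in the paper — the intermediate set $B\subseteq A$, the corrector $U$ matching $\hat x_0$ to $w_0$ off $B$, and the estimate $\|w_0(i)-\hat x_0(i)\|<\eps/2$ from the uniform convexity of $X$ — is needed to keep $\|V-T\|<\eps$ and $\|z_0-x_0\|<\eps$ simultaneously, and your sketch does not address how these are controlled. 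So the skeleton is right, but the decisive idea (build ASE on the finite block and compose with $P_A$) is missing, and the substitute you offer would not close the argument as written.
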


The proof is based on a modification of the argument in \cite[Theorem~2.4]{CK2022}. We will use the following result whose proof can be obtained from \cite[Lemma 2.3]{A2016}.

\begin{lem}[\mbox{\cite[Lemma~2.2]{CK2022}}]\label{lem:TP_A}
    Let $X$ be a Banach space and $Y$ be a $\C$-uniformly convex Banach space with modulus of $\C$-convexity $\delta_\C$. For a fixed $\eps>0$, if $T\in B_{\ELL(c_0(X), Y)}$ and $A\subseteq \N$ satisfy that $\|TP_A\| \geq 1-\frac{\delta_\C(\eps)}{1+\delta_\C(\eps)},$ then $\|T(I-P_A)\|\leq \eps$ where $P_A: c_0(X)\longrightarrow \ell_\infty^A(X)\subset c_0(X)$ is a projection on the components in $A.$ Analogously, if $T\in S_{\ELL(\ell_\infty^n(X), Y)}$ and $A\subset\{1, \dots, n\}$ satisfy that $\|TP_A\|\geq 1-\frac{\delta_\C(\eps)}{1+\delta_\C(\eps)},$ then $\|T(I-P_A)\|\leq \eps.$ 
\end{lem}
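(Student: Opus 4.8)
The plan is to reduce the statement to a single scalar inequality coming from the modulus of $\C$-convexity, exploiting that $P_A$ and $I-P_A$ act on disjointly supported blocks of $c_0(X)$, so that the pair $(P_A,\,I-P_A)$ behaves like an $M$-decomposition. Concretely, fix $g\in B_{c_0(X)}$ supported off $A$ (that is, $g=(I-P_A)g$); it suffices to bound $\|Tg\|$, since $\|T(I-P_A)\|$ is precisely the supremum of such quantities. Since $\|TP_A\|\geq\gamma:=1-\frac{\delta_{\C}(\eps)}{1+\delta_{\C}(\eps)}=\frac1{1+\delta_{\C}(\eps)}$, for each small $\theta>0$ I would choose $f\in B_{c_0(X)}$ with $f=P_Af$ and $a:=\|Tf\|>\|TP_A\|-\theta$. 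The key geometric observation is that, because $f$ lives on $A$ and $g$ off $A$, one has $\|f+wg\|=\max(\|f\|,\|g\|)\leq1$ for every $w\in\T$; applying $T$ gives $\|Tf+w\,Tg\|\leq\|T\|\leq1$ for all $w\in\T$. Thus, after dividing by $a$, the vectors $x:=Tf/a\in S_Y$ and $y:=Tg/a$ satisfy $\|x+wy\|\leq 1/a=1+\delta$ for every $w\in\T$, with $\delta:=\tfrac1a-1$.

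The heart of the matter is then the following scalar estimate, which I would isolate as a sublemma: if $x\in S_Y$ and $y\in Y$ satisfy $\sup_{w\in\T}\|x+wy\|\leq 1+\delta$, then $\|y\|\leq\max\!\big(\eps,\ \tfrac{\eps\,\delta}{\delta_{\C}(\eps)}\big)$. To prove it, assume $\|y\|>\eps$ and consider $\phi(t):=\sup_{w\in\T}\|x+twy\|$, which is convex in $t\geq0$ (a supremum of the convex functions $t\mapsto\|x+twy\|$) with $\phi(0)=1$ and $\phi(1)\leq1+\delta$. Setting $s:=\eps/\|y\|\in(0,1)$ and $y':=y/\|y\|\in S_Y$, convexity yields $\phi(s)\leq(1-s)+s(1+\delta)=1+s\delta$, while the definition of the modulus of $\C$-convexity forces $\phi(s)=\sup_{w\in\T}\|x+\eps w y'\|\geq1+\delta_{\C}(\eps)$. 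Comparing the two bounds gives $\delta_{\C}(\eps)\leq s\delta=\eps\delta/\|y\|$, i.e. $\|y\|\leq\eps\delta/\delta_{\C}(\eps)$, which is the claimed inequality; in particular $w_c(\delta_{\C}(\eps))\leq\eps$, tying the estimate back to the $w_c$-characterisation of $\C$-uniform convexity.

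Putting these together, $\|Tg\|=a\|y\|\leq a\max\!\big(\eps,\tfrac{\eps\delta}{\delta_{\C}(\eps)}\big)$. When $\|TP_A\|>\gamma$ I can take $a>\gamma$, so $\delta=\tfrac1a-1<\tfrac1\gamma-1=\delta_{\C}(\eps)$ and the maximum equals $\eps$, giving $\|Tg\|\leq a\eps\leq\eps$ at once. The only delicate point — and the step I expect to be the main obstacle — is the boundary case $\|TP_A\|=\gamma$, where $a\leq\gamma$ forces $\delta\geq\delta_{\C}(\eps)$; here the supremum defining $\|TP_A\|$ need not be attained (a $\C$-uniformly convex space such as $\ell_1$ need not be reflexive), so I must argue by letting $\theta\to0^+$. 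Using $\delta=\tfrac1a-1$ one computes $a\cdot\tfrac{\eps\delta}{\delta_{\C}(\eps)}=\tfrac{\eps(1-a)}{\delta_{\C}(\eps)}\to\tfrac{\eps(1-\gamma)}{\delta_{\C}(\eps)}=\eps\gamma\leq\eps$ as $a\to\gamma^-$, so the bound $\|Tg\|\leq\eps$ survives in the limit. Since $g$ was an arbitrary norm-one element supported off $A$, this yields $\|T(I-P_A)\|\leq\eps$. Finally, the $\ell_\infty^n(X)$ statement is proved verbatim: $P_A$ and $I-P_A$ again have disjoint block supports, so $\|f+wg\|=\max(\|f\|,\|g\|)$ for $w\in\T$, and the identical normalisation and $\C$-convexity estimate apply, now with $\|T\|=1$.
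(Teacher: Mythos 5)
Your proof is correct, and since the paper itself states this lemma by citation to \cite[Lemma~2.2]{CK2022} (whose proof in turn comes from \cite[Lemma~2.3]{A2016}) without reproducing an argument, yours is essentially the standard one from that source: disjoint block supports give $\|f+wg\|=\max(\|f\|,\|g\|)$, and the convexity of $t\mapsto\sup_{w\in\T}\|x+twy\|$ combined with the definition of $\delta_\C$ yields the scalar estimate $\delta_\C(\eps)\le \eps\delta/\|y\|$. Your explicit treatment of the boundary case where $\|TP_A\|=\frac{1}{1+\delta_\C(\eps)}$ and the supremum is not attained (letting $\theta\to 0^+$ and checking $\eps(1-a)/\delta_\C(\eps)\to\eps\gamma\le\eps$) is accurate and is a point the cited sources handle only implicitly.
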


This result will also be used frequently.
\begin{lem}[\mbox{\cite[Lemma~2.3]{CK2022}}]\label{lem:convex series to T^*y^*}
    Let $X$ and $Y$ be Banach spaces and $0<\eta<1$ be given. Assume that $T\in S_{\ELL(c_0(X), Y)}, y_0^*\in S_{Y^*}$ and $x_0\in S_{c_0(X)}$ satisfy that
    \begin{equation*}
        y_0^*(Tx_0)=\|Tx_0\|>1-\eta.
    \end{equation*}
    Then, for $0<\eta'<1,$ we have
    \begin{equation*}
        \sum_{i \in A}\|(T^*y_0^*)(i)\|>1-\frac{\eta}{\eta'}.
    \end{equation*}
    where $A=\{ i\in \N : \re [(T^*y_0^*)(i)](x_0(i))>(1-\eta')\|(T^*y_0^*)(i)\|\}.$
    In particular, 
    \begin{equation*}
        \re \sum_{i\in A} [T^*y_0^*(i)](x_0(i))>\left( 1-\frac{\eta}{\eta'}\right)(1-\eta').
    \end{equation*}
\end{lem}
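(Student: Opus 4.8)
The plan is to exploit the isometric identification $c_0(X)^*\equiv \ell_1(X^*)$, under which the functional $T^*y_0^*\in c_0(X)^*$ is represented by a sequence $(\varphi_i)_{i\in\N}$ in $X^*$ with $\varphi_i=(T^*y_0^*)(i)$ and $\sum_{i\in\N}\|\varphi_i\|=\|T^*y_0^*\|\leq\|T\|=1$. Writing $a_i:=x_0(i)\in X$, one has $\|a_i\|\leq\|x_0\|=1$ for every $i$ because the norm on $c_0(X)$ is the supremum of the fibre norms, and the pairing unfolds as $y_0^*(Tx_0)=(T^*y_0^*)(x_0)=\sum_{i\in\N}\varphi_i(a_i)$. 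Since this quantity equals $\|Tx_0\|$, a nonnegative real number exceeding $1-\eta$, taking real parts gives $\sum_{i\in\N}\re\varphi_i(a_i)>1-\eta$.

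The key step is then an elementary convexity estimate. First I would record the pointwise bound $\re\varphi_i(a_i)\leq|\varphi_i(a_i)|\leq\|\varphi_i\|\,\|a_i\|\leq\|\varphi_i\|$, valid for all $i$, while by the very definition of the complement of $A$ we have $\re\varphi_i(a_i)\leq(1-\eta')\|\varphi_i\|$ precisely when $i\notin A$. Splitting the series accordingly and writing $s:=\sum_{i\in A}\|\varphi_i\|$, so that $\sum_{i\notin A}\|\varphi_i\|=\|T^*y_0^*\|-s\leq 1-s$, I obtain
\[
1-\eta<\sum_{i\in\N}\re\varphi_i(a_i)\leq\sum_{i\in A}\|\varphi_i\|+(1-\eta')\sum_{i\notin A}\|\varphi_i\|\leq s+(1-\eta')(1-s)=(1-\eta')+\eta' s.
\]
Rearranging $1-\eta<(1-\eta')+\eta' s$ yields $\eta'-\eta<\eta' s$, that is $s>1-\eta/\eta'$, which is exactly the first claimed inequality $\sum_{i\in A}\|\varphi_i\|>1-\eta/\eta'$.

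The ``in particular'' statement follows at once: for $i\in A$ we have $\re\varphi_i(a_i)>(1-\eta')\|\varphi_i\|\geq 0$, the last inequality holding because $1-\eta'>0$ as $\eta'<1$. Summing over $A$ (the series converging absolutely since $\sum\|\varphi_i\|<\infty$ and $\|a_i\|\leq 1$) gives $\re\sum_{i\in A}\varphi_i(a_i)=\sum_{i\in A}\re\varphi_i(a_i)>(1-\eta')\sum_{i\in A}\|\varphi_i\|>(1-\eta')(1-\eta/\eta')$. There is no serious obstacle here: the result is a direct mass-concentration estimate for the absolutely summable series $(\varphi_i)$. The only points demanding care are the correct identification $c_0(X)^*\equiv\ell_1(X^*)$, which is what lets $T^*y_0^*$ genuinely decompose as an $\ell_1$-sum of functionals on the fibres, and keeping the bound $\sum_{i\notin A}\|\varphi_i\|\leq 1-s$ throughout, which is precisely what converts the threshold $(1-\eta')$ defining $A$ into the stated lower bound on the mass carried by $A$.
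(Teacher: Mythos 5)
Your proof is correct. The paper itself gives no argument for this lemma --- it is imported verbatim as \cite[Lemma~2.3]{CK2022} --- and your computation (identify $c_0(X)^*\equiv\ell_1(X^*)$, split the absolutely convergent series $\sum_i\re[(T^*y_0^*)(i)](x_0(i))>1-\eta$ over $A$ and its complement, bound the complementary mass by $1-s$, and rearrange) is precisely the standard mass-concentration argument behind the cited result, with all the small points (strictness on $A$, nonnegativity of the terms over $A$, $\|T^*y_0^*\|\le 1$) handled correctly.
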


Finally, we will need the following geometrical observation for finite $\ell_\infty$-sums  that can be easily proved.
\begin{lem}\label{lem:strongly exposed in ell_infty}
    Let $X$ be a uniformly convex Banach space and $\ell_\infty^m(X)$ the $\ell_\infty$-sum of $m$ copies of $X$ for some $m\in \N.$ Then,
    \begin{equation*}
        \str (B_{\ell_\infty^m(X)})=\{x\in S_{\ell_\infty^m(X)}  : \|x(i)\|=1 \ \forall i=1, \dots, m\}.
        \end{equation*}
\end{lem}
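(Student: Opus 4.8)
The plan is to prove the equality
\[
\str(B_{\ell_\infty^m(X)})=\{x\in S_{\ell_\infty^m(X)}:\|x(i)\|=1\ \forall i=1,\dots,m\}
\]
by showing both inclusions, using throughout Remark \ref{rem: unif convex implies strongly exposed}, which guarantees that every point of $S_X$ is strongly exposed by any norming functional, with a \emph{uniform} modulus since $X$ is uniformly convex.

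\textbf{The inclusion $\subseteq$.} First I would show that a strongly exposed point $x\in\str(B_{\ell_\infty^m(X)})$ must have $\|x(i)\|=1$ for every coordinate $i$. The point of the proof is that if some coordinate satisfied $\|x(j)\|<1$, then $x$ could not be an extreme point, so in particular not strongly exposed. Concretely, choose any $z\in S_X$ and a small $\lambda>0$ with $\|x(j)\|+\lambda\le 1$; setting $w$ to be the element of $\ell_\infty^m(X)$ that equals $\lambda z$ in coordinate $j$ and $0$ elsewhere, both $x+w$ and $x-w$ lie in $B_{\ell_\infty^m(X)}$ while $w\neq 0$, contradicting extremality. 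Hence every coordinate has norm exactly $1$.

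\textbf{The inclusion $\supseteq$.} For the converse I would take $x\in S_{\ell_\infty^m(X)}$ with $\|x(i)\|=1$ for all $i$ and exhibit a functional that strongly exposes it. For each $i$ pick $f_i\in S_{X^*}$ with $\re f_i(x(i))=1$, and define $x^*\in (\ell_\infty^m(X))^*=\ell_1^m(X^*)$ by $x^*(z)=\frac1m\sum_{i=1}^m f_i(z(i))$. Then $\|x^*\|=1$ and $\re x^*(x)=1=\sup_{B}\re x^*$. To verify strong exposition, suppose $(x_n)\subseteq B_{\ell_\infty^m(X)}$ satisfies $\re x^*(x_n)\to 1$. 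Since each summand $\re f_i(x_n(i))\le\|x_n(i)\|\le 1$, the averaged sum tending to $1$ forces $\re f_i(x_n(i))\to 1$ for every $i$ simultaneously; and because each $f_i$ strongly exposes $x(i)$ in $B_X$ (by Remark \ref{rem: unif convex implies strongly exposed}), this yields $\|x_n(i)-x(i)\|\to 0$ for each $i$, hence $\|x_n-x\|=\max_i\|x_n(i)-x(i)\|\to 0$.

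The only mild subtlety, which I would handle carefully, is the step deducing $\re f_i(x_n(i))\to 1$ for \emph{each} $i$ from convergence of the average: one must rule out that a deficit in one coordinate is compensated by an excess in another. This is immediate because each term is bounded above by $1$, so $\frac1m\sum_i\re f_i(x_n(i))\to 1$ together with $\re f_i(x_n(i))\le 1$ forces every individual term to $1$ (a finite average of quantities each $\le 1$ can approach $1$ only if each does). I expect no serious obstacle here since $m$ is finite and the uniform convexity of $X$ delivers the coordinatewise strong exposition directly.
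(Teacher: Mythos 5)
Your proof is correct and complete. The paper does not actually supply a proof of this lemma (it is stated as a fact ``that can be easily proved''), and your argument --- extremality forces every coordinate to have norm one, and the averaged norming functional $x^*(z)=\frac1m\sum_i f_i(z(i))$ strongly exposes $x$ because a finite average of terms each bounded by $1$ can tend to $1$ only if every term does, whereupon coordinatewise strong exposition from uniform convexity finishes the job --- is exactly the natural argument the authors are alluding to.
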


\begin{proof}[Proof of Theorem \ref{teo:c_0 sum}]
The fact that i) or ii) imply iii) follows from Theorem \ref{prop:M summand}. We will show that iii) implies i).
We proceed as in the proof of \cite[Theorem 2.4]{CK2022}. Fix $0<\eps<1$ and set $\eta(\eps):=\min\{\frac{\eps}{16},\frac{\delta_\C(\frac{\eps}{16})}{1+\delta_\C(\frac{\eps}{16})} , \delta_X(\frac{\eps}{2})\}$ where $\delta_X$ is the modulus of convexity of $X$ and $\delta_\C$ is the modulus of $\C$-convexity of $Y.$ Let $T\in S_{\ELL(c_0(X),Y)}$ and $x_0\in S_{c_0(X)}$ such that
\begin{equation*}
    \|Tx_0\|>1-\frac{\eta^6}{64}.
\end{equation*}
Observe that we can assume without loss of generality that $x_0$ has finite support as such elements are dense in $c_0(X).$ Choose $y_0^*\in S_{Y^*}$ satisfying $y_0^*(Tx_0)=\|Tx_0\|>1-\frac{\eta^6}{64}$ and define
\begin{equation*}
    A:=\left\{i\in \N:\re [(T^*y_0^*)(i)](x_0(i))>\left(1-\frac{\eta^3}{8}\right)\|(T^*y_0^*)(i)\|\right\}.
\end{equation*}
Applying Lemma \ref{lem:convex series to T^*y^*} with $\eta'=\frac{\eta^3}{8}$ yields
\begin{equation*}
    \sum_{i\in A}\|(T^*y_0^*)(i)\|>1-\frac{\eta^3}{8},
\end{equation*}
so $\|TP_A\|> 1-\frac{\eta^3}{8}\geq 1-\frac{\delta_\C(\frac{\eps}{16})}{1+\delta_\C(\frac{\eps}{16})}$, and by Lemma \ref{lem:TP_A} 
\begin{equation*}
    \|TP_A-T\|<\frac{\eps}{16}.
\end{equation*}
Write $\hat{T}\in B_{\ELL(\ell_\infty^A(X), Y)}$ for the canonical restriction of $T$ and $\hat{x}_0=(\hat{x}_0(i))_{i\in A}=\left(\frac{x_0(i)}{\|x_0(i)\|}\right)_{i\in A}\in S_{\ell_\infty^A(X)}.$ Observe that $\hat{x}_0\in \str(B_{\ell_\infty ^A(X)})$ by Lemma \ref{lem:strongly exposed in ell_infty}. Then,
\begin{equation*}
    \|\hat{T}\hat{x}_0\|>1-\frac{\eta^3}{4}
\end{equation*}
and $\|\hat{x}_0(i)-x_0(i)\|<\frac{\eta^3}{8}$ for $i\in A$ (by the definition of $A$). This means that $P_A(x_0)$ is close to a strongly exposed point of $B_{\ell_\infty ^A(X)}$.

Choose $y_1^*\in S_{Y^*}$ such that $\re y_1^*(\hat{T}\hat{x}_0)=\|\hat{T}\hat{x}_0\|$ and define $R\in \ELL(\ell_\infty^A(X), Y)$ by
\begin{equation*}
    Rz:=\hat{T}z+\eta y_1^*(\hat{T}z)\frac{\hat{T}\hat{x}_0}{\|\hat{T}\hat{x}_0\|}, \ z\in \ell_\infty^A.
\end{equation*}
By Bourgain's result in \cite{Bourgain1970}, as $\ell_\infty^A(X)$ has the RNP, there exists $Q\in \ASE(\ell_\infty^A(X), Y)$ such that $Q$ attains its norm at $w_0\in S_{\ell_\infty^A(X)}, \|Q\|=\|R\|$ and $\|Q-R\|<\frac{\eta^3}{4}.$ By Lemma \ref{lem:str-exp}, $w_0\in \str(B_{\ell_\infty^A(X)}).$ 

We have
\begin{equation*}
\begin{split}
    1-\frac{\eta^3}{4}+\eta\left(1-\frac{\eta^3}{4}\right)\leq\|R\hat{x}_0\|\leq \frac{\eta^3}{4}+1+\eta|y_1^*(\hat{T}w_0)|
\end{split}
\end{equation*}
Rotating $w_0$ if necessary, we may assume that $|y_1^*(\hat{T}w_0)|=\re y_1^*(\hat{T}w_0)$ and, from above,
\begin{equation*}
    \re y_1^*\left(\hat{T}\left(\frac{w_0+\hat{x}_0}{2}\right)\right)\geq 1-\eta^2.
\end{equation*}
Define
\begin{equation*}
    B:=\left\{i\in A : \re \left[\hat{T}^*y_1^*(i)\right]\left(\frac{w_0+\hat{x}_0}{2}(i)\right)>(1-\eta)\|\hat{T}^*y_1^*(i)\|\right\}.
\end{equation*}
Arguing like in \cite[Theorem 2.4]{CK2022} we can show that
\begin{equation*}
    \|\hat{T}(I-P_B)\|<\frac{\eps}{16}
\end{equation*}
and 
\begin{equation*}
    \|w_0(i)-\hat{x}_0(i)\|< \frac{\eps}{2},
\end{equation*}
using Lemma \ref{lem:convex series to T^*y^*} and uniform convexity of $Y$, respectively.

Now, define $\tilde{S}\in \ELL(\ell_\infty^A(X), Y)$ by $\tilde{S}=QP_B+Q(I-P_A)U,$ where $U\in B_{\ELL(\ell_\infty^A(X))}$ is chosen so that $U(E_i(\hat{x}_0(i))=E_i(w_0(i))$ for every $i\in A$, being $E_i: X\longrightarrow \ell_\infty^A(X)$ the $i$th injection map. Let $S$ be the canonical extension of $\frac{\tilde{S}}{\|\tilde{S}\|}$, that is, $S=\frac{\tilde{S}}{\|\tilde{S}\|}\circ P_A$. Define
\begin{equation*}
    z_0(i):=\left\{ \begin{array}{lcc} w_0(i) & \text{if} & i\in B \\\\ \hat{x}_0(i) & \text{if} & i\in A\backslash B\\ \\ x_0(i) & \text{otherwise} & 
    \end{array}\right.
\end{equation*}
Then, we can show arguing like in \cite[Theorem 2.4]{CK2022} that 

\begin{equation*}
   \|S\|=\|Sz_0\|=1, \quad \|S-T\|<\frac{11\eps}{16}<\eps \quad \text{and}\quad \|z_0-x_0\| <\eps.
\end{equation*}

Consider $\frac{\tilde{S}}{\|\tilde{S}\|}\in S_{\ELL(\ell_\infty^A(X), Y)}$. Then, 
$\left\|\frac{\tilde{S}}{\|\tilde{S}\|}(P_A(z_0))\right\|=\|Sz_0\|=1.$ We have that

\begin{equation*}
    P_A(z_0)=\left\{ \begin{array}{lcc} w_0(i) & \text{if} & i\in B \\\\ \hat{x}_0(i) & \text{if} & i\in A\backslash B\\ \\ 0 & \text{otherwise} & 
    \end{array}\right.
\end{equation*}
which is a strongly exposed point of $B_{\ell_\infty^A(X)}$ by Lemma \ref{lem:strongly exposed in ell_infty} (recall that $\|w_0(i)\|=1$ as $w_0\in \str(B_{\ell_\infty^A(X)})$ and $\|\hat{x}_0(i)\|=\|\frac{x_0(i)}{\|x_0(i)\|}\|=1$ for every $i\in A$). Hence, we can approximate $\frac{\tilde{S}}{\|\tilde{S}\|}$ by some $\tilde{V}\in \ASE(\ell_\infty^A(X), Y)$ such that
\begin{equation*}
    \|\tilde{V}P_A(z_0)\|=\|\tilde{V}\|=1
\end{equation*}
and $\left\|\tilde{V}-\frac{\tilde{S}}{\|\tilde{S}\|}\right\|<\frac{\eps}{16}.$ If we define $V=\tilde{V}P_A\in \ELL(c_0(X), Y)$ we have that
\begin{equation*}
    \|V\|\leq 1=\|Vz_0\|=\|\tilde{V}P_A(z_0)\|\leq 1,
\end{equation*}
so $\|V\|=1=\|Vz_0\|$, and $$\|V-T\|\leq \|\tilde{V}P_A-\frac{\tilde{S}}{\|\tilde{S}\|}P_A\|+\|S-T\|
\leq\left\|\tilde{V}-\frac{\tilde{S}}{\|\tilde{S}\|}\right\|+\|S-T\|<\frac{12\eps}{16}<\eps.$$
We claim that $V\in\RSE(c_0(X), Y)$ at $z_0.$ Indeed, let $(x_n)\subseteq B_{c_0(X)}$ be a sequence such that $\|Vx_n\|=\|\tilde{V}P_A(x_n)\|\longrightarrow \|V\|=1$. Then, as $\tilde{V}\in \ASE(\ell_\infty^A(X), Y)$, there exists a sequence $(\theta_n)\subseteq \T$ such that $\theta_nP_A(x_n)\longrightarrow Pz_0,$ so
\begin{equation*}
    \theta_n Vx_n\longrightarrow Vz_0,
\end{equation*}
as desired.

Finally, to see that iii) implies ii) just observe that the same argument (with the same constants) can be applied to the pair $(\ell_\infty^A(X), Y)$ considering the canonical extension to $c_0(X)$ of each $T: \ell_\infty^A(X) \longrightarrow Y.$
\end{proof}

\subsection{When the domain has an $M$-summand in the real case}\label{sec: M summand real}
In the real case, we can obtain an analogous result to Proposition \ref{prop:l1sum} when the domain has an M-summand. This extends some already known results for the BPBp which apply when $Y$ is strictly convex and $X=c_0, \ell_\infty^m$ \cite[Theorem~2.7]{K2013} or $X=\left[\bigoplus_{k=1}^\infty \ell_2^k\right]_{c_0}$ \cite[Theorem~2.3]{GL2025}. Notice that we state the result in the real case as Example \ref{ex: c0 ell_1} shows that in the complex case it is not true.

\begin{theorem}\label{prop:l infty sum real}
    Let $X, Y$ be real Banach spaces such that $X=X_1\oplus_\infty X_2$ for non-trivial subspaces $X_1, X_2.$ Suppose that one of these conditions holds.
    \begin{itemize}
        \item[i)] The pair $(X, Y)$ satisfies the BPBp-RSE.
        \item[ii)] The pair $(X, Y)$ has the BPBp and $Y$ is strictly convex.
    \end{itemize}
    Then, $Y$ is uniformly convex.
\end{theorem}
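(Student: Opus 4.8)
The plan is to mirror the proof of Theorem \ref{prop:M summand} (the complex case) but to replace the role of $\C$-uniform convexity with ordinary uniform convexity, exploiting the fact that in the real case the relevant modulus to control is $\delta_Y(\eps)$ rather than $w_c(\delta)$. First I would fix $0<\eps<2$ and take two elements $y_1,y_2\in S_Y$ with $\|y_1-y_2\|\geq\eps$; the goal is to produce a uniform lower bound $\delta>0$ on $1-\frac{\|y_1+y_2\|}{2}$ that depends only on $\eps$ (via $\eta(\eps/2)$ from the BPBp-RSE), which is exactly $\delta_Y(\eps)>0$. Set $y_0=\frac{y_1+y_2}{2}$ and suppose toward a contradiction that $\|y_0\|>1-\delta$ for $\delta$ small; after normalising we may assume $\|y_0\|=1$ and that $\|y_0+wy\|$-type control is available, where here the natural replacement for the complex perturbation is $y=\frac{y_1-y_2}{2}$, so that $y_0\pm y=y_1,y_2\in S_Y$ and hence $\|y_0+ty\|\leq 1$ for all real $t\in[-1,1]$ (by convexity of the ball, since both endpoints lie in $B_Y$).

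The construction of the operator is identical in spirit: using the $M$-decomposition $X=X_1\oplus_\infty X_2$, pick norm-one elements $e_j\in S_{X_j}$ and norming functionals $e_j^*\in S_{X^*}$ with $e_j^*(e_j)=1$ and $e_j^*(X_i)=0$ for $i\neq j$, and define $T(x_1,x_2)=e_1^*(x_1)y_0+e_2^*(x_2)y$. Factoring as $T=\tilde T\circ P$ with $P$ the projection onto $\ell_\infty^2$ and $\tilde T\colon\ell_\infty^2\to Y$ sending $(1,0)\mapsto y_0$, $(0,1)\mapsto y$, the norm of $\tilde T$ is now $\sup_{|t|\leq 1}\|y_0+ty\|=\max(\|y_1\|,\|y_2\|)=1$ because in the real case the supremum over the disk is replaced by the supremum over the real segment $[-1,1]$. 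Thus $\|T\|=1$ and $T$ already attains its norm at $(e_1,0)$; applying the BPBp-RSE (hypothesis i)) or the BPBp with strict convexity (hypothesis ii)) at this almost-norm-attaining datum yields $S\in\RSE(X,Y)$ and $x=(x_1,x_2)\in S_X$ with $\|Sx\|=\|S\|=1$, $\|x-(e_1,0)\|<\eps/2$, and $\|S-T\|<\eps$. As in the complex proof, $\|x_2\|<\eps/2<1$ forces $x_2\pm\lambda e_2$ to keep $x$ in $S_X$ for small $\lambda>0$ (using the $\ell_\infty$ structure: $\|x_2\pm\lambda e_2\|\leq\|x_2\|+\lambda<1$), so Lemma \ref{lemma:easy-lemma-to-kill-propertyRSE-B} gives $S(0,e_2)=0$, whence $\|y\|=\|T(0,e_2)-S(0,e_2)\|<\eps$; under hypothesis ii) the same conclusion follows from strict convexity applied to the extreme point $S(e_1,0)$.

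The key step is translating the bound $\|y\|<\eps$ into the uniform-convexity estimate. Since $y=\frac{y_1-y_2}{2}$ we get $\|y_1-y_2\|=2\|y\|<2\eps$, and by tracking the dependence on $\delta$ one shows that whenever $1-\frac{\|y_1+y_2\|}{2}$ is smaller than the threshold dictated by $\eta(\eps/2)$, the distance $\|y_1-y_2\|$ must be small; taking the contrapositive gives $\delta_Y(2\eps)\geq\delta>0$, i.e. $Y$ is uniformly convex. The main obstacle I anticipate is the bookkeeping at the normalisation stage: unlike the complex case where $w_c(\delta)$ cleanly packages the perturbation, here one must be careful that $y_0$ need not have norm exactly $1$ (only $\|y_0\|>1-\delta$), so the operator $T$ must be rescaled and the almost-norm-attainment inequality $\|T(e_1,0)\|/\|T\|>1-\eta(\eps/2)$ verified with the correct constants; this is precisely the place where the quantitative link between $\delta$ and $\eta(\eps/2)$ is forged, exactly paralleling the estimate $\|T\|\leq 1+\delta<1+\eta(\eps/2)$ in the proof of Theorem \ref{prop:M summand}.
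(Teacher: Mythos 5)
Your proposal is correct and follows essentially the same route as the paper's proof: the paper argues by contradiction with sequences $(x_n),(y_n)$ witnessing the failure of uniform convexity, builds the rank-two operator $T_n(x)=e_1^*(x)\frac{x_n+y_n}{2}+e_2^*(x)\frac{x_n-y_n}{2}$, bounds its norm by exactly the real-scalar identity $\left|\frac{t+s}{2}\right|+\left|\frac{t-s}{2}\right|=\max(|t|,|s|)\leq 1$ that you exploit, and kills $S(0,e_2)$ via Lemma \ref{lemma:easy-lemma-to-kill-propertyRSE-B} (or via strict convexity of $Y$ in case ii)). Your only anticipated obstacle---rescaling $T$ because $\|y_0\|$ may be strictly less than $1$---is in fact moot, since $T$ attains its norm $1$ at $(e_1,\pm e_2)$ (mapping to $y_1$ and $y_2$), so no normalisation of $y_0$ or of $T$ is needed and the datum $\|T(e_1,0)\|=\|y_0\|>1-\eta(\eps/2)$ is already in the correct form to apply the BPBp-RSE.
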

\begin{proof}
    The argument is based on the proof of \cite[Theorem~2.7]{K2013}. Suppose that $(X, Y)$ satisfies the BPBp-RSE and that $Y$ is not uniformly convex. Then, there exist $0<\eps<1$ and $(x_n)_{n\in\N}, (y_n)_{n\in\N} \subseteq S_Y$ such that $$\left\|\frac{x_n+y_n}{2}\right\|\longrightarrow 1 \quad\text{and}\quad \|x_n-y_n\|>\eps$$ for every $n\in \N.$ Choose $e_1\in S_{X_1}, e_2\in S_{X_2}, e_1^*,e_2^*\in S_{X^*}$ such that $e_i^*(e_j)=\delta_{ij}$ for $i,j=1,2$ and define $T_n\colon X_1\oplus_\infty X_2\longrightarrow Y$ by
    \begin{equation*}
        T_n(x)=e_1^*(x)\frac{x_n+y_n}{2}+e_2^*(x)\frac{x_n-y_n}{2} \quad (x\in X_1\oplus_\infty X_2)
    \end{equation*}
    for each $n\in \N.$ Then, $\|T_i\|\leq 1$ for each $i\in \N$ as in the real case we have
    \begin{equation}\label{eq: real case vs complex}
    \begin{split}
        \{\|T_i(x)\| : x\in B_X\} &\subseteq \left\{ \left\|t\frac{x_i+y_i}{2}x_i+s\frac{x_i-y_i}{2}y_i\right\| : -1\leq t,s\leq 1 \right\} \\
        &=\left\{ \left\|\frac{t+s}{2}x_i+\frac{t-s}{2}y_i\right\| : -1\leq t,s\leq 1 \right\} \\
        &=\{\|ax+by\|: |a|+|b|\leq 1\},
    \end{split}
    \end{equation}
    so $\sup_{x\in B_X} \|Tx\|\leq \max \{\|ax+by\|: |a|+|b|\leq 1\}\leq 1.$ Observe also that $\lim_{i\rightarrow \infty}\|T_i(e_1)\|=1.$ 
    
    Take $j\in \N$ such that $\|T_j(e_1)\|>1-\eta(\frac{\eps}{4})$, where $\eta(\frac{\eps}{4})<\frac{\eps}{4}$ is taken from the BPBp-RSE of the pair $(X, Y).$ In particular, $$\left\|\frac{T_j}{\|T_j\|}(e_1)\right\|>1-\eta\left(\frac{\eps}{4}\right) \quad\text{and}\quad 
   \left\|\frac{T_j}{\|T_j\|}-T_j\right\|<\frac{\eps}{4}.$$ Applying the BPBp-RSE, we obtain an operator $\tilde{T}\in \RSE(c_0, Y)$ and $x\in S_{c_0}$ such that
   \begin{equation*}
       \|\tilde{T}x\|=1=\|\tilde{T}\|, \quad \left\|\tilde{T}-\frac{T_j}{\|T_j\|}\right\|<\frac{\eps}{4} \quad\text{and}\quad \|x-e_1\|<\frac{\eps}{4}<1,
   \end{equation*}
   and so $\|\tilde{T}-T_j\|<\frac{\eps}{2}.$
   
   Write $x=x_1+x_2\in S_X$ with $x_1\in B_{X_1}, x_2\in B_{X_2}$, then
    \begin{equation*}
        \|x_2\|\leq \max\{ \|x_1-e_1\|, \|x_2\|\}=\|x_1-e_1+x_2\|=\|x-e_1\|<1,
    \end{equation*}
    so there is $0<\delta<1-\|x_2\|$ such that $\|x_2\pm \delta e_2\|\leq \|x_2\|+\delta<1$ and, therefore
    \begin{equation*}
        \|x\pm\delta e_2\|=\max\{\|x_1\|,\|x_2\pm \delta e_2\|\}\leq 1.
    \end{equation*}
    If $\tilde{T}$ attains its norm at $x,$ applying that $\tilde{T}\in \RSE(X, Y)$ together with Lemma \ref{lemma:easy-lemma-to-kill-propertyRSE-B} yields that $\tilde{T}e_2=0$, so $\tilde{T}(e_1\pm e_2)=\tilde{T}(e_1).$ But then,
    \begin{equation*}
    \begin{split}
        \|x_j-y_j\|&=\|T_j(e_1+e_2)-T_j(e_1-e_2)\|\\
        & \leq \|T_j(e_1+e_2)-\tilde{T}(e_1+e_2)\|+\|\tilde{T}(e_1-e_2)+T_j(e_1-e_2)\|\\
        &<\frac{\eps}{2}+\frac{\eps}{2}=\eps,
    \end{split}
    \end{equation*}
    a contradiction. If we assume that the pair only satisfies the BPBp and $Y$ is stricly convex, then the equality $\tilde{T}(e_1\pm e_2)=\tilde{T}(e_1)$ follows from the strict convexity of $Y$ and the same argument works.
\end{proof}

Let us remark that the previous argument cannot be translated to the complex case, as the set equalities in equation \ref{eq: real case vs complex} are only valid in the real case. Indeed, it is essential for this argument that the operators $T_n$ have norm $\|T_n\|\leq 1.$

\begin{example}\label{ex: c0 ell_1}
    The pair $(c_0, \ell_1)$ does not satisfy the BPBp-RSE in the real case, although it does in the complex case in virtue of Theorem \ref{teo:L_infty char}. Moreover, the pair formed by the respective underlying real spaces $(c_0(\ell_2^2),\ell_1(\ell_2^2))$  does not satisfy this property applying Theorem \ref{prop:l infty sum real} and the fact that $\ell_1(\ell_2^2)$ is not uniformly convex. This shows that the BPBp-RSE of a pair is not necessarily inherited by the underlying real structure of the spaces.
\end{example}

Now we are able to give another characterisation of uniform convexity in the real case in terms of the BPBp-RSE.
\begin{theorem}\label{teo:c_0 real}
    Let $Y$ be a real Banach space. The following are equivalent.
    \begin{itemize}
        \item[i)] The pair $(\ell_\infty^m, Y)$ has the BPBp-RSE for some $m\in\N, m\geq 2$.
        \item[ii)] The pair $(c_0, Y)$ has the BPBp-RSE.
        \item[iii)] The pair $(\ell_\infty, Y)$ has the BPBp-RSE.
        \item[iv)] The pair $(L_\infty(\mu), Y)$ has the BPBp-RSE, where $\mu$ is any measure.
        \item[v)] The pair $(C(K), Y)$ has the BPBp-RSE, where $K$ is any not connected compact Hausdorff space.
        \item[vi)] $Y$ is uniformly convex.
    \end{itemize}
\end{theorem}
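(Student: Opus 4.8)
The plan is to establish the two blocks of implications ``each of (i)--(v) $\Rightarrow$ (vi)'' and ``(vi) $\Rightarrow$ each of (i)--(v)'', which together yield all the equivalences. The first block is the necessity of uniform convexity and should follow uniformly from Theorem \ref{prop:l infty sum real}; the second is the sufficiency and should follow from the already available BPBp results together with Corollary \ref{cor:strBPBp}.

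For the necessity, I would observe that every domain listed in (i)--(v) admits a non-trivial decomposition as an $\ell_\infty$-sum of two closed subspaces, so that Theorem \ref{prop:l infty sum real} applies directly and forces $Y$ to be uniformly convex. Concretely, $\ell_\infty^m=\R\oplus_\infty \ell_\infty^{m-1}$ (both summands non-trivial since $m\geq 2$), and likewise $c_0$ and $\ell_\infty$ split off their first coordinate. For $L_\infty(\mu)$, the standing hypothesis $\dim L_\infty(\mu)\geq 2$ provides a measurable set $A$ with both $\mu(A)>0$ and $\mu(\Omega\setminus A)>0$, whence $L_\infty(\mu)=L_\infty(\mu\restricted_A)\oplus_\infty L_\infty(\mu\restricted_{\Omega\setminus A})$; and for $C(K)$ with $K$ not connected, a proper non-empty clopen set $S\subseteq K$ gives $C(K)=C(S)\oplus_\infty C(K\setminus S)$. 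In each case both summands are non-trivial, so the hypothesis of Theorem \ref{prop:l infty sum real} is met and (vi) follows.

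For the sufficiency, assume $Y$ is uniformly convex. Then $\str(B_Y)=S_Y$ by Remark \ref{rem: unif convex implies strongly exposed}, so by Corollary \ref{cor:strBPBp}(ii), applied with $\mathcal{I}(X,Y)=\ELL(X,Y)$ (which contains all rank one operators), it suffices to verify that each of the listed pairs already has the classical BPBp. The spaces $\ell_\infty^m$, $\ell_\infty$, $L_\infty(\mu)$ and $C(K)$ are all isometrically $C(K')$ spaces for a suitable compact Hausdorff $K'$ (respectively a finite discrete space, $\beta\N$, the compact spectrum of $L_\infty(\mu)$, and $K$ itself), so the BPBp for these pairs is precisely item (e) of Examples \ref{ex:BPBp}. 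The remaining domain $c_0=C_0(\N)$ is not a $C(K)$ space for compact $K$, and here I would invoke the known fact that $(c_0,Y)$ has the BPBp whenever $Y$ is uniformly convex (see \cite{K2013}). Applying Corollary \ref{cor:strBPBp}(ii) to each pair then upgrades the BPBp to the BPBp-RSE, giving (i)--(v).

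The only genuinely non-mechanical point is the sufficiency input for $c_0$: since $c_0$ is the single domain in the list that is not of the form $C(K)$ with $K$ compact, item (e) of Examples \ref{ex:BPBp} does not cover it, and one must supply the corresponding real-case BPBp result for $(c_0,Y)$ with $Y$ uniformly convex separately. Everything else is the routine verification that the five domains carry non-trivial $M$-summands (for necessity) and reduce to $C(K')$-spaces (for sufficiency), after which Theorem \ref{prop:l infty sum real}, Remark \ref{rem: unif convex implies strongly exposed} and Corollary \ref{cor:strBPBp} assemble the full chain of equivalences.
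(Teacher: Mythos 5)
Your proof is correct and follows exactly the paper's own (one-line) argument: necessity of uniform convexity via Theorem \ref{prop:l infty sum real} applied to the evident non-trivial $M$-summand decompositions of each domain, and sufficiency by combining the known classical BPBp results with Corollary \ref{cor:strBPBp}(ii) and Remark \ref{rem: unif convex implies strongly exposed}. Your extra care in sourcing the $c_0$ case separately (via \cite{K2013}, since item (e) of Examples \ref{ex:BPBp} only covers compact $K$) is a welcome precision that the paper glosses over, but it is not a different route.
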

\begin{proof}
The fact that each item implies vi) follows from Theorem \ref{prop:l infty sum real}; vi) implies i), ii), iii), iv) and v) by Examples \ref{ex:BPBp} and Corollary \ref{cor:strBPBp}.
\end{proof}

Observe that the pair $(Z, Y)$, where $Z$ is a $c_0$-sum or a finite $\ell_\infty$-sum of copies of a uniformly convex Banach space and $Y$ is uniformly convex, has the BPBp in the real setting as the proof of \cite[Theorem 2.4]{CK2022} is still valid in this case, so we obtain the following.
\begin{theorem}\label{teo:c_0(X) real}
     In the real case, the pair $(c_0(X), Y)$, where $X$ is a uniformly convex Banach space, satisfies the BPBp-RSE if and only if $Y$ is uniformly convex. The same holds if the domain is $\ell_\infty^m(X).$
\end{theorem}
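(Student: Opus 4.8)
The plan is to prove the two implications separately, in each case reducing the statement to machinery already developed in the paper. For the necessity of uniform convexity of $Y$ I would exploit the $M$-summand structure of the domain, while for the sufficiency I would first establish the classical BPBp and then upgrade it to the BPBp-RSE using the abundance of strongly exposed points in $Y$.

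For the ``only if'' direction, I would observe that both candidate domains admit a non-trivial decomposition as an $\ell_\infty$-sum: splitting off the first coordinate gives $c_0(X) = X \oplus_\infty c_0(X)$ and, for $m \geq 2$, $\ell_\infty^m(X) = X \oplus_\infty \ell_\infty^{m-1}(X)$, where in each case both summands are non-trivial. Hence Theorem \ref{prop:l infty sum real} applies directly and yields that, whenever the pair has the BPBp-RSE, the range space $Y$ is uniformly convex.

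For the ``if'' direction, suppose $Y$ is uniformly convex. As noted in the paragraph preceding the statement, the argument of \cite[Theorem~2.4]{CK2022} carries over to the real setting, so the full space $\ELL(c_0(X), Y)$ (respectively $\ELL(\ell_\infty^m(X), Y)$) enjoys the classical BPBp. Since this space contains all rank one operators and, by Remark \ref{rem: unif convex implies strongly exposed}, uniform convexity of $Y$ forces $\str(B_Y) = S_Y$, I would conclude by invoking Corollary \ref{cor:strBPBp}(ii) to pass from the BPBp to the BPBp-RSE.

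The proof is essentially an assembly of earlier results, so I expect no serious obstacle; the only point requiring care is the verification that the computation of \cite{CK2022} transfers to the real case without modification, which rests on the constants there being insensitive to the choice of scalar field. Once that is granted, the remaining steps are immediate applications of Theorem \ref{prop:l infty sum real} and Corollary \ref{cor:strBPBp}.
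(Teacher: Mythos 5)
Your proposal is correct and follows essentially the same route as the paper: necessity via the non-trivial $\ell_\infty$-decomposition of the domain and Theorem \ref{prop:l infty sum real}, and sufficiency by combining the real-case validity of \cite[Theorem~2.4]{CK2022} (giving the classical BPBp) with Corollary \ref{cor:strBPBp}(ii), since uniform convexity of $Y$ yields $\str(B_Y)=S_Y$. Your explicit remark that $m\geq 2$ is needed for the necessity direction is a welcome precision that the paper leaves implicit.
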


\section{The RSE Bollobás theorem for some classes of operators}\label{sec:subspaces of operators}
In this section, on one hand, we extend the results in Section \ref{sec: characterisations} to compact operators and, on the other, we study some conditions under which the ACK$_\rho$ structure of the range is enough to ensure the BPBp-RSE or the BPBp-ASE of some subspaces of operators. Recall that the BPBp-RSE for an operator subspace is defined immediately after Definition \ref{defi:BPBp-RSE}.

The main subspaces of $\ELL(X, Y)$ that we are going to consider in this section are the operator ideals of finite rank, compact, and weakly compact operators, which will be denoted  by $\FR(X, Y)$, $\KP(X, Y)$ and $\mathcal W(X, Y)$, respectively. Also recall that $T\in\ELL(X, Y)$ is a \emph{Dunford-Pettis operator} if it sends weakly convergent sequences to norm convergent sequences, and we will denote this operator ideal by $\DP(X, Y)$. We have that $$\FR\subseteq \KP\subseteq \DP.$$ 
An operator $T\in\ELL(X, Y)$ is called an \emph{Asplund operator} if it factors through an Asplund space. The class of all Asplund operators between two Banach spaces will be denoted by $\mathcal{A}(X, Y).$ Finally, we say that $T \in \Lin(X,Y)$ is a \textit{Radon-Nikod\'ym operator} (\emph{RN operator}, in short) if for every finite measure space $(\Omega, \Sigma, \mu)$ and every vector measure of bounded variation $G\colon\Sigma \to X$, the measure $T\circ G \colon\Sigma \to Y$ has a Bochner integrable Radon-Nikod\'ym derivative \cite{Edgar}. It is satisfied that
$$\FR\subseteq\KP\subseteq \mathcal W\subseteq \mathcal A \cap \mathcal{RN}.$$

\subsection{Compact operators from $L_1(\mu)$ or $C_0(L)$} \label{sec: char for compact} In this section 
we show that the results and characterisations in the previous section are also valid for compact operators, although we will state the results in the most general form we are able to. Specifically, we characterise the BPBp-RSE for compact operators whose domain is $L_1(\mu)$ or $L_\infty(\mu)$. Moreover, we extend some of the techniques in \cite{DGMM2018} for passing from sequence spaces to function spaces to the RSE setting and derive some consequences for compact operators defined on $C_0(L)$ and $\ell_1$ preduals. 

Let us begin by observing that Proposition \ref{prop:l1sum} and Theorems \ref{prop:M summand} and \ref{prop:l infty sum real} are actually valid for any subspace of operators containing all of rank-two. This follows simply from the fact that the operators used in the beginning of the proofs are of rank-two. We can apply this together with some known results in the literature to obtain the following natural consequences.

\begin{theorem}
    Let $Y$ be a Banach space and $(\Omega, \Sigma, \mu)$ any $\sigma$-finite measure space. The following are equivalent:
   \begin{itemize}
       \item[i)] $\FR(L_1(\mu),Y)$ has the BPBp-RSE
       \item[ii)] $\KP(L_1(\mu),Y)$ has the BPBp-RSE
       \item[iii)] $\mathcal W(L_1(\mu),Y)$ has the BPBp-RSE
       \item[iv)] $\mathcal{RN}(L_1(\mu),Y)$ has the BPBp-RSE.
       \item[v)] $\ELL(L_1(\mu), Y)$ has the BPBp-RSE.
       \item[vi)] $Y$ is uniformly convex.
   \end{itemize}
\end{theorem}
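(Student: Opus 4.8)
The plan is to show that each of (i)–(v) is equivalent to (vi); since (vi) serves as a common hub, this yields the full chain of equivalences. The downward implications (i)–(v) $\Rightarrow$ (vi) are handled uniformly, while the upward implications split into a ``reflexivity collapse'' that settles (iii)–(v) and an appeal to the classical BPBp for the compact and finite-rank ideals that settles (i)–(ii).

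First, for (i)–(v) $\Rightarrow$ (vi): since $\mu$ is $\sigma$-finite and $\dim L_1(\mu)\ge 2$, there are two disjoint measurable sets of positive measure, so $L_1(\mu)=L_1(\mu_1)\oplus_1 L_1(\mu_2)$ for nontrivial $L_1(\mu_1),L_1(\mu_2)$. Each of $\FR,\KP,\mathcal W,\mathcal{RN},\ELL$ contains all rank-two operators, and by the observation preceding the statement Proposition \ref{prop:l1sum} remains valid verbatim for any such subspace, because the operator built in its proof has rank two. Applying this to whichever subspace is assumed to satisfy the BPBp-RSE forces $Y$ to be uniformly convex.

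Next, for (vi) $\Rightarrow$ (iii),(iv),(v): uniform convexity implies reflexivity by the Milman--Pettis theorem, so for every $T\in\ELL(L_1(\mu),Y)$ the set $T(B_{L_1(\mu)})$ is relatively weakly compact (being contained in the weakly compact ball $\|T\|B_Y$); hence every bounded operator into $Y$ is weakly compact. Combined with $\mathcal W\subseteq\mathcal{RN}\subseteq\ELL$, this gives the equalities of Banach spaces
\[
\mathcal W(L_1(\mu),Y)=\mathcal{RN}(L_1(\mu),Y)=\ELL(L_1(\mu),Y),
\]
so (iii), (iv) and (v) are literally the same assertion, which holds by Theorem \ref{teo:L_1 char}.

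Finally, for (vi) $\Rightarrow$ (i),(ii): Remark \ref{rem: unif convex implies strongly exposed} gives $\str(B_Y)=S_Y$, and both $\FR(L_1(\mu),Y)$ and $\KP(L_1(\mu),Y)$ contain all rank-one operators and are stable under rank-one perturbations. Hence, by Corollary \ref{cor:strBPBp}(ii), it suffices that each of these ideals has the classical BPBp. This is the one genuinely external ingredient: the BPBp for finite-rank and for compact operators from $L_1(\mu)$ into a uniformly convex space, available in the literature (see \cite{DGMM2018} and the references therein). The main obstacle is precisely this fact; everything else is a formal consequence of the rank-two version of Proposition \ref{prop:l1sum}, the reflexivity collapse, Theorem \ref{teo:L_1 char} and Corollary \ref{cor:strBPBp}. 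If one preferred not to cite it, one would instead adapt the sequence-to-function-space techniques of \cite{DGMM2018} to the present setting, which is the programme developed in this subsection.
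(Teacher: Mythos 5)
Your proof is correct, and its skeleton matches the paper's: the downward implications come from the rank-two observation that Proposition \ref{prop:l1sum} applies to any operator subspace containing the rank-two operators, and the upward ones come from a classical BPBp result combined with Corollary \ref{cor:strBPBp} via $\str(B_Y)=S_Y$. Where you genuinely diverge is in handling (iii)--(v): your ``reflexivity collapse'' (uniform convexity $\Rightarrow$ reflexivity $\Rightarrow$ every bounded operator into $Y$ is weakly compact, so $\mathcal W=\mathcal{RN}=\ELL$ by the chain $\mathcal W\subseteq\mathcal{RN}\subseteq\ELL$) reduces those three items to the already-established equivalence of Theorem \ref{teo:L_1 char}, with no further external input. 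The paper instead treats i)--iv) uniformly by citing \cite[Corollary~2.4, Remark~2.5]{ABGKM2014}, which provides the classical BPBp for all of these ideals from $L_1(\mu)$ into a uniformly convex space in one stroke. Your route buys a smaller external dependency for (iii)--(v); the paper's buys uniformity and a single citation covering (i)--(iv). For (i)--(ii) you still need the external BPBp for $\FR$ and $\KP$, and your pointer to \cite{DGMM2018} ``and the references therein'' is looser than it should be: the precise source the paper relies on is \cite[Corollary~2.4, Remark~2.5]{ABGKM2014}, and you should cite that (or verify that \cite{DGMM2018} indeed covers compact \emph{and} finite-rank operators from $L_1(\mu)$, not just domains like $C_0(L)$). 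This is a citation-precision issue, not a mathematical gap.
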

\begin{proof}
    The equivalence between v) and vi) is Theorem \ref{teo:L_1 char}. The fact that each item i)-iv) imply vi) follows from Proposition \ref{prop:l1sum}, as remarked before. Finally, vi) implies i)-iv) by \cite[Corollary~2.4, Remark~2.5]{ABGKM2014} and Corollary \ref{cor:strBPBp}.
\end{proof}

Also, we can derive the following:
\begin{theorem}\label{teo:C_0(L) compact real case}
    Let $Y$ be a real Banach space and $L$ a locally compact Hausdorff topological space. If $Y$ is uniformly convex, the pair $(C_0(L), Y)$ satisfies the BPBp-RSE for compact operators.
\end{theorem}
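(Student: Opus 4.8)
The plan is to reduce the statement to the classical Bishop-Phelps-Bollob\'as property for compact operators and then to invoke Corollary \ref{cor:strBPBp}. Since $Y$ is uniformly convex, Remark \ref{rem: unif convex implies strongly exposed} gives $\str(B_Y)=S_Y$, and the subspace $\mathcal{I}=\KP(C_0(L),Y)$ contains all rank-one operators. Consequently, Corollary \ref{cor:strBPBp}(ii) applies verbatim to $\mathcal{I}$: as soon as we know that $\KP(C_0(L),Y)$ has the classical BPBp, it automatically has the BPBp-RSE, the approximating range strongly exposing operator being produced from the norm-attaining compact one by a \emph{rank-one} perturbation (Proposition \ref{prop: aproximate NA by RSE or ASE}) and hence remaining compact. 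The legitimacy of this last step rests on the fact that if a norm-one $S_1\in\mathcal{I}$ attains its norm at $x_0$, then $S_1x_0\in S_Y=\str(B_Y)$ forces $S_1x_0\in\str(\overline{S_1(B_{C_0(L)})})$, because $\overline{S_1(B_{C_0(L)})}\subseteq B_Y$ and a point strongly exposed in the larger ball is strongly exposed in any subset containing it. Thus the whole problem reduces to establishing the classical BPBp for $\KP(C_0(L),Y)$.

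It therefore remains to prove that $\KP(C_0(L),Y)$ has the classical BPBp when $Y$ is uniformly convex, and for this I would adapt the localization technique of \cite{DGMM2018} that passes from sequence spaces to function spaces. Concretely, start with $T\in S_{\KP(C_0(L),Y)}$ and $f_0\in S_{C_0(L)}$ with $\|Tf_0\|>1-\eta(\eps)$, choose $y^*\in S_{Y^*}$ with $\re y^*(Tf_0)=\|Tf_0\|$, and consider the representing measure $\mu=T^*y^*\in M(L)$. Using the compactness of $T$ together with the inner regularity of $\mu$, one concentrates the relevant mass on a compact set and factors $T$, up to a small error, through a finite-dimensional space $\ell_\infty^m$ by means of a norm-one map $P\colon C_0(L)\to\ell_\infty^m$ and an operator $\widetilde{T}\colon\ell_\infty^m\to Y$. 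Since $\ell_\infty^m$ is finite dimensional, every operator on it is automatically compact, so applying the BPBp for the pair $(\ell_\infty^m,Y)$ (which holds since $Y$ is uniformly convex, by Examples \ref{ex:BPBp} and Corollary \ref{cor:strBPBp}) raises no compactness issue; this yields $\widetilde{S}\colon\ell_\infty^m\to Y$ attaining its norm near $Pf_0$ with $\|\widetilde{S}-\widetilde{T}\|$ small. Composing and extending $\widetilde{S}$ back to $C_0(L)$ through peaking functions and a partition of unity, one obtains a \emph{compact} operator $S$ close to $T$ that attains its norm at a function close to $f_0$, which is the classical BPBp inside $\KP(C_0(L),Y)$.

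The main obstacle is precisely this localization and lifting step for a \emph{general} locally compact Hausdorff space $L$, where $C_0(L)$ may fail to contain any nontrivial clopen set (for instance when $L$ is connected). The delicate points are to build the factorization through $\ell_\infty^m$ and the subsequent extension so that all the Bollob\'as-type estimates are preserved, and, crucially, so that the lifted operator $S$ is genuinely compact rather than merely weakly compact, and so that the point of almost norm attainment transfers cleanly back to $C_0(L)$. This is exactly where the techniques of \cite{DGMM2018} must be carried over to the present setting; once that is in place, the reduction carried out in the first paragraph completes the proof.
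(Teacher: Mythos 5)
Your first paragraph is exactly the paper's proof: uniform convexity gives $\str(B_Y)=S_Y$, and Corollary \ref{cor:strBPBp}(ii) (via the rank-one perturbation of Proposition \ref{prop: aproximate NA by RSE or ASE}, which preserves compactness) upgrades the classical BPBp of $\KP(C_0(L),Y)$ to the BPBp-RSE. The only difference is that the remaining ingredient --- the classical BPBp for $\KP(C_0(L),Y)$ with $Y$ uniformly convex --- is precisely \cite[Corollary~3.5]{DGMM2018}, which the paper simply cites; your attempt to re-derive it in the last two paragraphs is unnecessary, and the localization step you leave open is not a genuine gap once that citation is invoked.
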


\begin{proof}
    If $Y$ is uniformly convex the subspace $\KP(C_0(L), Y)$ has the BPBp by \cite[Corollary~3.5]{DGMM2018}, and so it has the BPBp-RSE by Corollary \ref{cor:strBPBp}. The converse implication holds using Theorem \ref{prop:l infty sum real}.
\end{proof}

Therefore, applying Theorem \ref{prop:l infty sum real} we get a characterisation for compact operators from $L_\infty(\mu).$
\begin{cor}\label{cor:L_infty compact real case}
    Let $Y$ be a real Banach space and $(\Omega, \Sigma, \mu)$ a measure space. Then, the subspace $\KP(L_\infty(\mu),Y)$ has the BPBp if and only if $Y$ is uniformly convex.
\end{cor}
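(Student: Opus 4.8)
The plan is to prove both implications by realising $L_\infty(\mu)$ as a space of continuous functions carrying a rich $M$-structure. As a commutative von Neumann algebra, $L_\infty(\mu)$ is isometrically a $C(K) = C_0(K)$ for a hyperstonean, hence totally disconnected, compact Hausdorff space $K$; in particular, whenever $\dim L_\infty(\mu)\geq 2$ there is a non-trivial clopen partition of $K$, so $L_\infty(\mu)=X_1\oplus_\infty X_2$ for non-trivial subspaces $X_1,X_2$. These two facts — the $C_0(L)$ identification and the $M$-summand decomposition — power the two directions respectively.

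For sufficiency ($Y$ uniformly convex $\Rightarrow$ BPBp), I would simply invoke Theorem \ref{teo:C_0(L) compact real case} with $L=K$: it yields that $\KP(C_0(K),Y)=\KP(L_\infty(\mu),Y)$ has the BPBp-RSE, and since the BPBp-RSE of a subspace implies its BPBp (the approximating operator is norm-attaining and stays in the subspace), we are done. At bottom this rests on \cite[Corollary~3.5]{DGMM2018} together with Corollary \ref{cor:strBPBp}, exactly as in the proof of Theorem \ref{teo:C_0(L) compact real case}.

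For necessity ($\KP(L_\infty(\mu),Y)$ has the BPBp $\Rightarrow Y$ uniformly convex), the strategy is to feed the $M$-summand decomposition into Theorem \ref{prop:l infty sum real}. The operators constructed in that theorem have rank two, so by the observation opening this subsection — that Proposition \ref{prop:l1sum} and Theorems \ref{prop:M summand} and \ref{prop:l infty sum real} remain valid for any subspace of $\ELL(X,Y)$ containing the rank-two operators, which $\KP$ does — Theorem \ref{prop:l infty sum real} applies verbatim to $\KP(L_\infty(\mu),Y)$ and returns the uniform convexity of $Y$, provided one of its two hypotheses is met.

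The crux lies precisely here. Theorem \ref{prop:l infty sum real}(i) asks for the BPBp-RSE, whereas the hypothesis I am given is only the plain BPBp; hence I must land in branch (ii) and therefore establish beforehand that $Y$ is strictly convex. This strict-convexity step is the main obstacle, and it is the only place where the RSE machinery is not handed to us for free: with the BPBp-RSE one gets, through Lemma \ref{lemma:easy-lemma-to-kill-propertyRSE-B}, that the relevant image point is extreme in $\overline{\tilde T(B_X)}$ irrespective of the geometry of $Y$, but with the plain BPBp that extremality must come from strict convexity of $Y$ itself. To supply it I would exploit the infinite dimensionality of $L_\infty(\mu)$: given a non-degenerate segment $[u,v]\subseteq S_Y$, build a compact operator on a $c_0$- or $C(K)$-copy sitting inside $L_\infty(\mu)$ that almost attains its norm along that segment yet admits no norm-attaining compact approximant near the almost-attaining point, contradicting the BPBp; alternatively one may quote a known necessary condition forcing strict convexity of the range of such a BPBp pair. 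Once strict convexity is secured, Theorem \ref{prop:l infty sum real}(ii) closes the argument.
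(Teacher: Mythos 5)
Your sufficiency direction is exactly the paper's: identify $L_\infty(\mu)$ with a $C(K)$ for a totally disconnected compact $K$ and quote Theorem \ref{teo:C_0(L) compact real case}. The problem is the necessity direction, and you have correctly located where it breaks but not repaired it. Reading the hypothesis as the plain BPBp, you must pass through branch (ii) of Theorem \ref{prop:l infty sum real}, which requires strict convexity of $Y$; you acknowledge this is the ``main obstacle'' and then only sketch a hoped-for construction (``build a compact operator \dots\ that admits no norm-attaining compact approximant'') or appeal to an unnamed ``known necessary condition''. Neither exists: no argument can extract strict convexity of $Y$ from the plain BPBp of $\KP(L_\infty(\mu),Y)$, because that implication is false. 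Take $Y=\ell_\infty^2$ (real), which has Lindenstrauss' property $\beta$ and is neither strictly nor uniformly convex; the standard property-$\beta$ argument perturbs a given operator by a rank-one one, so it shows that $\KP(X,Y)$ has the BPBp for \emph{every} domain $X$, in particular for $X=L_\infty(\mu)$. Hence the corollary read literally with ``BPBp'' fails, and your plan for the converse cannot be completed.

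The resolution is that ``BPBp'' in the statement is intended to be ``BPBp-RSE'' (this is how the rest of Subsection \ref{sec: char for compact} reads, and it is what the paper's one-line derivation ``applying Theorem \ref{prop:l infty sum real}'' presupposes). Under that reading the necessity is immediate from branch (i) of Theorem \ref{prop:l infty sum real}, applied to the subspace $\KP(L_\infty(\mu),Y)$, which contains the rank-two operators used in its proof; no strict convexity is needed, since Lemma \ref{lemma:easy-lemma-to-kill-propertyRSE-B} supplies the required extremality for RSE operators. You had all of these pieces in hand; the gap is that you committed to the literal reading and left the resulting (unfillable) strict-convexity step as a promise rather than a proof.
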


In the complex case, with a little more effort we can obtain some similar results to Theorem \ref{teo:C_0(L) compact real case} and Corollary \ref{cor:L_infty compact real case} using $\C$-uniform convexity. In order to do so, we will adapt the abstract techniques introduced in \cite{DGMM2018} to transfer the BPBp for compact operators from sequence spaces to function spaces to the RSE setting. Observe that they are valid for both real and complex cases.

\begin{lem}\label{lem:proyections-K}
    Let $X$ and $Y$ be Banach spaces. Suppose that there exists a function $\eta: \R^+\longrightarrow \R^+$ such that given $\delta>0, x_1^*, ..., x_n^*\in B_{X^*},$ and $x_0\in S_X,$ we can find a norm-one operator $P\in \ELL(X, X)$ and a norm-one operator $i\in\ELL(P(X), X)$ such that
    \begin{itemize}
        \item[i)] $\|P^*x_j^*-x_j^*\|<\delta$ for $j=1,...,n$;
        \item[ii)] $\|i(Px_0)-x_0\|<\delta$;
        \item[iii)] $P\circ i=\mathrm{Id}_{P(X)}$;
        \item[iv)] the pair $(P(X), Y)$ has the BPBp-RSE for compact operators with the function $\eta$. 
    \end{itemize}
    Then, the pair $(X, Y)$ has the BPBp-RSE for compact operators.
\end{lem}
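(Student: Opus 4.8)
The plan is to run the standard projection-transfer scheme: push the almost-norm-attainment problem down to the distinguished piece $P(X)$, solve it there with hypothesis (iv), and lift the solution back to $X$. Fix $\eps>0$ and let $T\in\KP(X,Y)$ with $\|T\|=1$ and $x_0\in S_X$ satisfy $\|Tx_0\|>1-\tilde\eta(\eps)$, where $\tilde\eta(\eps)$ is a small parameter to be fixed at the end in terms of $\eps'=\eps/4$, of $\eta(\eps')$ and of an auxiliary $\delta>0$. Since $T$ is compact, $T^*(B_{Y^*})$ is relatively compact in $X^*$, so I would pick a finite $\delta$-net $x_1^*,\dots,x_n^*\in B_{X^*}$ of it (note $T^*(B_{Y^*})\subseteq B_{X^*}$ as $\|T\|=1$). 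Feeding $\delta$, these functionals and the point $x_0$ into the hypothesis produces a norm-one $P\in\ELL(X,X)$ and $i\in\ELL(P(X),X)$ satisfying (i)--(iv).

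Next I would transfer the problem to $P(X)$. Set $R:=T\circ i\in\KP(P(X),Y)$; then $\|R\|\le1$, while (ii) together with $\|T\|=1$ gives $\|R(Px_0)\|\ge\|Tx_0\|-\delta>1-\tilde\eta(\eps)-\delta$, and $\|Px_0\|>1-\delta$ (from (ii) and $\|i\|=1$). Hence the normalisation $\hat R:=R/\|R\|$ is a norm-one compact operator almost attaining its norm at $z_0:=Px_0/\|Px_0\|\in S_{P(X)}$. Choosing $\tilde\eta(\eps)+\delta<\eta(\eps')$ and applying the BPBp-RSE for compact operators of $(P(X),Y)$ with the function $\eta$ yields $\hat S\in\RSE(P(X),Y)\cap\KP(P(X),Y)$ and $w_0\in S_{P(X)}$ with $\|\hat S\|=\|\hat S w_0\|=1$, $\|\hat S-\hat R\|<\eps'$ and $\|w_0-z_0\|<\eps'$. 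I then lift by defining $S:=\hat S\circ P\in\KP(X,Y)$ and $\tilde x:=i(w_0)\in X$. Since $Pi=\mathrm{Id}_{P(X)}$ gives $P\tilde x=w_0$ and $\|i\|=\|P\|=1$, one checks $\|\tilde x\|=1$ and $S\tilde x=\hat S w_0$, so $\|S\|=\|S\tilde x\|=1$; moreover $S\in\RSE(X,Y)$, because any $(a_k)\subseteq B_X$ with $\|Sa_k\|\to1$ has $(Pa_k)\subseteq B_{P(X)}$ with $\|\hat S(Pa_k)\|\to1$, so the $\RSE$ property of $\hat S$ forces a subsequence with $\hat S(Pa_k)\to\lambda\hat S w_0=\lambda S\tilde x$ for some $\lambda\in\T$.

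It remains to verify the two quantitative estimates, and this is where the only real subtlety lies. The bound $\|\tilde x-x_0\|\le\|w_0-z_0\|+(1-\|Px_0\|)+\|i(Px_0)-x_0\|<\eps'+2\delta$ is immediate from (ii). For $\|S-T\|$ I would split $S-T=(\hat S-\hat R)P+\tfrac{1}{\|R\|}\big(T\,iP-T\big)+\tfrac{1-\|R\|}{\|R\|}T$; the first and third summands are controlled by $\eps'$ and by $1-\|R\|<\tilde\eta(\eps)+\delta$, so the crux is to show that $\|T(iP-\mathrm{Id})\|$ is small. Passing to adjoints, this equals $\sup_{y^*\in B_{Y^*}}\|(iP)^*T^*y^*-T^*y^*\|$, and for each $x^*=T^*y^*$ I approximate it by a net element $x_j^*$ within $\delta$. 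The key computation is that both $a$ and $iPa$ lie in $B_X$ (as $\|iPa\|\le1$) and $P(iPa)=Pa$ by (iii), so condition (i) applied to both $a$ and $iPa$ gives $|x_j^*(iPa)-x_j^*(a)|\le|x_j^*(iPa)-x_j^*(Pa)|+|x_j^*(Pa)-x_j^*(a)|<2\delta$, whence $\|(iP)^*x^*-x^*\|<4\delta$ uniformly on $T^*(B_{Y^*})$; thus $\|T(iP-\mathrm{Id})\|\le4\delta$. Collecting all estimates, the choice $\eps'=\eps/4$ together with sufficiently small $\delta$ and $\tilde\eta(\eps)$ makes $\|S-T\|<\eps$ and $\|\tilde x-x_0\|<\eps$. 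The main obstacle is precisely this uniform bound on $\|T(iP-\mathrm{Id})\|$: it cannot come from (ii) alone, which only controls the single point $x_0$, and it is here that the compactness of $T$ is indispensable, reducing the estimate to the finite net of functionals handled by (i).
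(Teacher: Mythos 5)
Your proposal is correct and follows essentially the same route as the paper, which simply refers to the proof of the corresponding transfer lemma for the classical BPBp in \cite{DGMM2018} and adds the single observation that, since $\|P\|=1$, composing an $\RSE$ operator on $P(X)$ with $P$ yields an $\RSE$ operator on $X$. You have reconstructed that argument in full — including the net-based uniform estimate $\|T(iP-\mathrm{Id})\|\le 4\delta$ coming from compactness of $T$ and condition i), and the subsequence argument showing $S=\hat S\circ P\in\RSE(X,Y)$ — and all the steps check out.
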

\begin{proof}
    Just follow the proof of \cite[Lemma~2.1]{DGMM2018} and observe that, as $\|P\|=1,$ if $\tilde{S}\in \mathcal{K}(P(X), Y)$ belongs to RSE, then so does $S=\tilde{S}P$.
\end{proof}
In particular, this can be applied to $c_0$ considering the natural projections $P_m: c_0 \longrightarrow \ell_\infty^m, m\in\N$. This will be used together with some stability results which are valid for compact operators as well as for the general case and are analogous to the stability results in \cite{ACKLM2015}. It is worth mentioning that these results were extended to absolute summands in \cite{CDJM2019}. 
\begin{lem}\label{prop:stability-K}
    Let $\{X_i\colon  i\in I\}$ and $\{Y_j\colon j\in J\}$ be two families of Banach spaces and $X, Y$ their $c_0$-, $\ell_\infty$- or $\ell_1$-sum, respectively. If $(X, Y)$ has the BPBp-RSE (for compact operators) with $\eta(\eps)$ for $\eps\in (0,1)$ then so does the pair $(X_i, Y_j)$ for every $i\in I, j\in J.$
\end{lem}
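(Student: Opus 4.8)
The plan is to run the extension--restriction scheme used for the classical BPBp in \cite{ACKLM2015} (and extended to absolute sums in \cite{CDJM2019}), transferring the property from the big sum down to a single summand, and to add on top of it the observation---already isolated in the proof of Lemma~\ref{lem:proyections-K}---that composing an $\RSE$ operator with the canonical norm-one projections and injections of a sum keeps it in $\RSE$. Write $J_i^X\colon X_i\hookrightarrow X$ and $J_j^Y\colon Y_j\hookrightarrow Y$ for the canonical isometric injections and $\pi_i^X\colon X\to X_i$, $\pi_j^Y\colon Y\to Y_j$ for the canonical norm-one coordinate projections, so that $\pi_i^X J_i^X=\Id_{X_i}$, $\pi_j^Y J_j^Y=\Id_{Y_j}$ and $\pi_i^X(B_X)=B_{X_i}$; these are available for each of the $c_0$-, $\ell_\infty$- and $\ell_1$-sums and are exactly the data that make the argument run parallel to \cite{ACKLM2015,CDJM2019}.

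Given $T\in\ELL(X_i,Y_j)$ with $\|T\|=1$ and $x\in S_{X_i}$ with $\|Tx\|>1-\eta(\eps)$, I would form $\widehat T:=J_j^Y\circ T\circ\pi_i^X\in\ELL(X,Y)$, so that $\|\widehat T\|=\|T\|=1$ and, at $\widehat x:=J_i^X x\in S_X$, one has $\widehat T\widehat x=J_j^Y Tx$ with $\|\widehat T\widehat x\|=\|Tx\|>1-\eta(\eps)$; note that $\widehat T$ is compact whenever $T$ is, so the same extension serves the ``for compact operators'' version. Applying the BPBp-RSE of $(X,Y)$ yields $\widehat S\in\RSE(X,Y)$ (compact if $T$ is) and $\widehat x_0\in S_X$ with $\|\widehat S\|=\|\widehat S\widehat x_0\|=1$, $\|\widehat S-\widehat T\|<\eps$ and $\|\widehat x_0-\widehat x\|<\eps$. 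The restriction $S:=\pi_j^Y\circ\widehat S\circ J_i^X$ then satisfies $\|S-T\|=\|\pi_j^Y(\widehat S-\widehat T)J_i^X\|\le\|\widehat S-\widehat T\|<\eps$ and $\|S\|\le1$, and the candidate norming point is read off from $\widehat x_0$: the $i$-th coordinate $\xi_i:=\pi_i^X\widehat x_0$ has $\|\xi_i\|>1-\eps$ while $\widehat x_0$ carries only $O(\eps)$ mass outside the $i$-th slot, so $x_0:=\xi_i/\|\xi_i\|\in S_{X_i}$ lies within $O(\eps)$ of $x$. The parallel bookkeeping for the range records that $\widehat S\widehat x_0$ concentrates in the $j$-th summand, since $\pi_k^Y\widehat T\widehat x=0$ gives $\|\pi_k^Y\widehat S\widehat x_0\|\le\|\widehat S\widehat x_0-\widehat T\widehat x\|<2\eps$ for $k\neq j$.

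Once exact norm attainment on the summand is in hand, the $\RSE$ property of $S$ follows exactly as the remark in Lemma~\ref{lem:proyections-K}: a sequence $(z_n)\subseteq B_{X_i}$ with $\|Sz_n\|\to\|S\|=1$ lifts through the isometry $J_i^X$ to $(J_i^X z_n)\subseteq B_X$ with $\|\widehat S(J_i^X z_n)\|\to\|\widehat S\|=1$, so a subsequence of the images $\widehat S J_i^X z_n$ converges to a rotate of $\widehat S\widehat x_0$ by $\RSE$-ness of $\widehat S$, and applying the continuous projection $\pi_j^Y$ gives $Sz_{\sigma(n)}\to\lambda Sx_0$. This is precisely the step upgrading the classical stability proof to the $\RSE$ setting; all error bounds being fixed multiples of $\eps$, the summand inherits the BPBp-RSE with a modulus of the same order as $\eta$.

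The step I expect to be the main obstacle is securing that exact attainment, i.e.\ upgrading $\|S\|\le1$ and $\|Sx_0\|>1-O(\eps)$ to $\|S\|=\|Sx_0\|=1$, since both $\pi_j^Y$ on the range and the restriction to $J_i^X(X_i)$ on the domain can strictly lower norms. This is genuinely sum-type dependent and is where the RSE geometry, rather than plain bookkeeping, must intervene. For a domain $\ell_\infty$- or $c_0$-sum the off-slot coordinates of $\widehat x_0$ have norm $<1$, forcing $\|\xi_i\|=1$, and Lemma~\ref{lemma:easy-lemma-to-kill-propertyRSE-B} applied to the admissible segment through $\widehat x_0$ gives $\widehat S(\widehat x_0-J_i^X\xi_i)=0$, hence $\|\widehat S J_i^X\xi_i\|=1$; for a domain $\ell_1$-sum the equality case of $1=\|\widehat S\widehat x_0\|\le\sum_k\|\widehat S J_k^X\xi_k\|\le\sum_k\|\xi_k\|=1$ forces $\|\widehat S J_i^X\xi_i\|=\|\xi_i\|$; and for a range $c_0$- or $\ell_\infty$-sum the identity $1=\|\widehat S\widehat x_0\|=\max_k\|\pi_k^Y\widehat S\widehat x_0\|$ together with $\|\pi_k^Y\widehat S\widehat x_0\|<2\eps$ for $k\neq j$ pins $\|\pi_j^Y\widehat S\widehat x_0\|=1$. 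The genuinely delicate case is the $\ell_1$-sum of the range, where $\|\pi_j^Y\widehat S\widehat x_0\|=1-\sum_{k\neq j}\|\pi_k^Y\widehat S\widehat x_0\|$ is only close to $1$; there I would remove the residual leakage into the other summands by exploiting that $\widehat S\widehat x_0$ is a strongly exposed point of $\overline{\widehat S(B_X)}$ (Lemma~\ref{lem:str-exp}(b)) and that the whole image $\widehat S(B_X)$ already sits within $\eps$ of the $j$-th summand, invoking the absolute-summand framework of \cite{CDJM2019} to finish.
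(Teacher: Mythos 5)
Your overall scheme --- extend $T$ to $\widehat T=J_j^Y\circ T\circ\pi_i^X$, apply the BPBp-RSE of $(X,Y)$, restrict back, and check that composing with the canonical injections/projections preserves membership in $\RSE$ --- is exactly the route the paper takes (it runs the three stability arguments of \cite{ACKLM2015} and, in each case, adds the verification that the restricted operator is still range strongly exposing via the sandwich $\|Sz_n\|\le\|\widehat S(J_i^Xz_n)\|\le 1$). Your treatment of the domain side (both the $c_0/\ell_\infty$ case via Lemma~\ref{lemma:easy-lemma-to-kill-propertyRSE-B} and the $\ell_1$ case via the equality chain $1=\|\widehat S\widehat x_0\|\le\sum_k\|\widehat S J_k^X\xi_k\|\le\sum_k\|\xi_k\|=1$) and of a $c_0/\ell_\infty$ range is correct, modulo harmless constants such as requiring $\eps<1/2$.

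The genuine gap is the $\ell_1$-sum range, and your proposed fix there does not work. If $Y=\bigl[\bigoplus_j Y_j\bigr]_{\ell_1}$, the naive restriction $S=\pi_j^Y\circ\widehat S\circ J_i^X$ satisfies only $\|Sx_0\|=\|\widehat S J_i^Xx_0\|-\sum_{k\ne j}\|\pi_k^Y\widehat S J_i^Xx_0\|$, which is strictly less than $1$ whenever $\widehat S$ has any leakage into the other summands --- and nothing forces that leakage to vanish. In particular, strong exposedness of $\widehat S\widehat x_0$ in $\overline{\widehat S(B_X)}$ (Lemma~\ref{lem:str-exp}(b)) is perfectly compatible with $\pi_k^Y\widehat S\ne 0$ for $k\ne j$: take, say, $\widehat S=(S_1,\delta S_1)$ with $\delta$ tiny; the image is an affine copy of $S_1(B_X)$ and the exposedness of the image point tells you nothing about the $k$-th components of the operator. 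So ``exploiting that $\widehat S\widehat x_0$ is strongly exposed'' cannot remove the residual mass, and ``invoking the absolute-summand framework of \cite{CDJM2019}'' is not an argument. What is actually needed (and what the paper does, following \cite{ACKLM2015}*{Proposition~2.7}) is an explicit rank-one correction: writing $\widehat Sx=(S_1x,S_2x)$ and choosing $y^*=(y_1^*,y_2^*)\in S_{Y^*}$ with $\re y^*(\widehat Sx_0)=1$, one sets
\begin{equation*}
Sx=S_1x+y_2^*(S_2x)\,\frac{S_1x_0}{\|S_1x_0\|},
\end{equation*}
which reabsorbs the leakage and attains norm one at $x_0$. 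This correction then forces a separate RSE verification, since $S$ is no longer a plain compression of $\widehat S$: one uses $\|Sx_n\|\le\|S_1x_n\|+\|S_2x_n\|=\|\widehat Sx_n\|\le 1$ to conclude $\|\widehat Sx_n\|\to 1$ and transfers the convergence of $(\widehat Sx_{\sigma(n)})$ componentwise. Without this step your proof of the $\ell_1$-range case is incomplete.
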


We notice that the arguments to prove this Lemma are standard and are motivated by the proofs of \cite[Proposition 2.3, Proposition 2.6, Proposition 2.7]{ACKLM2015}. Our job here is to show that the involved operators are still RSE. We include a proof for the sake of completeness.
\begin{proof}
We state the proof for the general BPBp-RSE and we follow the argument of the stability results in \cite{ACKLM2015}. In the compact case, the same proof holds as compact operators between two Banach spaces are a linear subspace and they satisfy the so-called ideal property, that is, given arbitrary Banach spaces $X_0, Y_0$ we have $R\circ S\circ T \in \mathcal K(Z, W)$ for any $S\in S(X_0, Y_0), T\in \ELL(Z, X_0), R\in \ELL(Y_0, W)$, and for every Banach spaces $Z$ and $W.$ 

From now on, suppose that the pair $(X, Y)$ satisfies the BPBp-RSE. We separate the proof into three cases.

    \textit{Case 1.} $X=\left[\bigoplus_{i\in I} X_i\right]_{\ell_1}, Y=\left[\bigoplus_{j\in J} Y_j\right]_{\ell_\infty}$ or $Y=\left[\bigoplus_{j\in J} Y_j\right]_{c_0}$

    Let $E_i$ and $F_j$ the natural isometric embeddings of $X$ and $Y$,  respectively, and $P_i$ and $ Q_j$ the natural norm one projections from $X$ and $Y$ onto $X_i$ and $Y_j$, respectively. Fix $h\in I, k\in J$ and let $0<\eps<1$ and $T\in S_{\ELL(X_h, Y_k)}, x_h\in S_{X_h}$ such that $\|Tx_h\|>1-\eta(\eps).$ If we consider $\tilde{T}=F_kTP_h$ we have that $\|\tilde{T}\|=\|T\|=1$ and $\|\tilde{T}(E_hx_0)\|>1-\eta(\eps).$ Then, by the proof of \cite[Proposition~2.3]{ACKLM2015} if $\tilde{S}\in \ELL(X, Y)$ and $x_0\in S_X$ are such that 
    \begin{equation*}
        \|\tilde{S}x_0\|=1=\|\tilde{S}\|, \quad \|\tilde{T}-\tilde{S}\|<\eps \text{ and } \|x_0-E_h(x_h)\|<\eps
    \end{equation*}
    then the operator $S=Q_k\tilde{S}E_h\in \ELL(X_h, Y_h)$ and the point $P_h(x_0)\in B_X$ satisfy
    \begin{equation*}
        \|S(P_h(x_0))\|=1=\|S\|=\|P_h(x_0)\|, \text{ } \|S-T\|<\eps \text{ and } \|x_h-P_hx_0\|<\eps,
    \end{equation*}
    so it only remains to show that $S\in \RSE(X_h, Y_k)$ if $\tilde{S}\in \RSE(X, Y).$ Let $(x_n)\subseteq B_{X_h}$ such that $\|Sx_n\|= \|Q_k\tilde{S}(E_hx_n)\|\longrightarrow \|S\|.$ Then, 
    \begin{equation*}
        \|Q_k\tilde{S}(E_hx_n)\|\leq \sup_{j\in J} \|Q_j\tilde{S}E_h(x_n)\|=\|\tilde{S}E_h(x_n)\|\leq \|\tilde{S}\|=1,
    \end{equation*}
    so $\|\tilde{S}E_h(x_n)\|\longrightarrow 1=\|\tilde{S}\|.$ As $\tilde{S}\in \RSE(X, Y),$ there exists $(\lambda_n)\subseteq \T$ such that $\lambda_nE_h(x_n)\longrightarrow x_0.$ This means that
    \begin{equation*}
        \lambda_n x_n =P_h(E_h(\lambda_nx_n))\longrightarrow P_h x_0,
    \end{equation*}
    as we wanted.

    \textit{Case 2.} $X=\left[\bigoplus_{i\in I} X_i\right]_{c_0}$ or $X=\left[\bigoplus_{i\in I} X_i\right]_{\ell_\infty}$ and $Y$ is any Banach space. 

    Without loss of generality, assume that $X=X_1\oplus_\infty X_2$ and let $\eps>0, T:X_1\longrightarrow Y, \|T\|=1$ and $x_0\in S_{X_1}$ such that $\|Tx \|>1-\eta(\eps)$. Define $\tilde{T}: X\longrightarrow Y$ by 
    $$\tilde{T}(x_1,x_2)=Tx_1,$$
    then $\|\tilde{T}\|=1$, $\|\tilde{T}(x_0, 0)\|>1-\eta(\eps)$ and there exist $\tilde{S}\in\RSE(X, Y), (x_1',x_2')\in S_X$ such that 
    \begin{equation*}
        \|\tilde{S}\|=1=\|\tilde{S}(x_1',x_2')\|, \text{ } \|\tilde{T}-\tilde{S}\|<\eps \text{ and } \|(x_1', x_2')-(x_0,0)\|<\eps.
    \end{equation*}
    By the proof of \cite[Proposition~2.6]{ACKLM2015}, we have that the operator $S\in \ELL(X_1, Y)$ defined as $Sx_1=\tilde{S}(x_1, 0)$ satisfies
    \begin{equation*}
        \|S\|=1=\|S(x_1')\|, \quad \|T-S\|<\eps \text{ and } \|x_1'-x_0\|<\eps,
    \end{equation*}
    and we only need to show that $S\in \RSE(X, Y).$ Let $(x_n)\subseteq B_{X_1}$ be such that $\|Sx_n\|=\|\tilde{S}(x_n,0)\|\longrightarrow\|S\|.$ Then, there exists $(\lambda_n)\subseteq \T$ such that $\lambda_n\tilde{S}(x_n,0)\longrightarrow \tilde{S}(x_1',0),$ that is, $$\lambda_nSx_n\longrightarrow Sx_1'.$$
    
    \textit{Case 3.} $X$ is any Banach space and $Y=\left[\bigoplus_{j\in J} Y_j\right]_{\ell_1}$

    Without loss of generality, we can assume that $Y=Y_1\oplus_1 Y_2$. Let $\eps>0, T\in \ELL(X, Y_1), \|T\|=1$ and $x_1\in S_X$ be such that $\|Tx_1\|>1-\eta(\eps).$ Define $\tilde{T}\in \ELL(X, Y)$ by $\tilde{T}x=(Tx, 0).$ Then, $\|\tilde{T}\|=1, \|\tilde{T}x_1\|>1-\eta(\eps)$ and there exist $\tilde{S}\in \RSE(X, Y), x_0\in S_X$ such that
    \begin{equation*}
         \|\tilde{S}\|=1=\|\tilde{S}(x_0)\|, \quad \|\tilde{T}-\tilde{S}\|<\eps \text{ and } \|x_0-x_1\|<\eps.
    \end{equation*}
    Write $\tilde{S}x=(S_1x, S_2x)$ and choose $y^*=(y_1^*,y_2^*)\in S_{Y^*}$ such that $\re y^*(\tilde{S}x_0)=1.$ Define $S\in \ELL(X, Y_1)$ by
    \begin{equation*}
        Sx=S_1x+y_2^*(S_2x)\frac{S_1 x_0}{\|S_1x_0\|}.
    \end{equation*}
    It is shown in \cite[Proposition~2.7]{ACKLM2015} that 
    \begin{equation*}
        \|S\|=1=\|Sx_0\|, \quad \|T-S\|<\eps \text{ and } \|x_1-x_0\|<\eps.
    \end{equation*}
    Let $(x_n)\subseteq B_X$ be such that $\|Sx_n\|\longrightarrow 1=\|Sx_0\|.$ Then, 
    \begin{equation*}
    \begin{split}
        \left\| S_1x_n+y_2^*(S_2x_n)\frac{S_1x_0}{\|S_1x_0\|} \right\|&\leq \|S_1x_n\|+|y_2^*(S_2x_n)|\\
        &\leq \|S_1x_n\|+\|S_2x_n\|\\
        &=\|\tilde{S}(x_n)\|\leq 1,
    \end{split}
    \end{equation*}
    so $\|\tilde{S}x_n\|\longrightarrow 1=\|\tilde{S}\|$. As $\tilde{S}\in \RSE(X, Y)$, there exists $(\lambda_n)\subseteq \T$ such that $\lambda_n \tilde{S}x_n=\lambda_n(S_1x_n, S_2x_n)\longrightarrow \tilde{S} x_0=(S_1x_0, S_2x_0).$ Therefore, 
    \begin{equation*}
        \lambda_nSx_n\longrightarrow Sx_0.\qedhere
    \end{equation*} 
\end{proof}

The following can be proved as in \cite[Lemma~3.2]{DGMM2018} using Lemma \ref{lem:proyections-K} and Lemma \ref{prop:stability-K}.

\begin{lem}\label{lem:c0 -K}
    Let $X$ be a real or complex Banach space and let $Y$ be a real uniformly convex or a complex $\C$-uniformly convex Banach space, respectively. The following are equivalent.
    \begin{itemize}
        \item[i)] The pair $(c_0(X), Y)$ has the BPBp-RSE for compact operators.
        \item[ii)] There is a function $\eta:\R^+\longrightarrow \R^+$ such that the pairs $(\ell_\infty^m(X), Y)$ with $m\in\N$ have the BPBp-RSE for compact operators with the function $\eta.$
    \end{itemize}
    Moreover, when $\mathcal{K}(X, Y)=\ELL(X, Y)$ (in particular, if one of the spaces $X$ or $Y$ is finite dimensional), this happens when $(c_0(X), Y)$ or $(\ell_\infty(X), Y)$ has the BPBp-RSE.
\end{lem}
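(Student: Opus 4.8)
The plan is to establish each implication through one of the two cited lemmas, and then to reduce the ``Moreover'' clause to the compact equivalence by exploiting that finite $\ell_\infty$-sums preserve the identity $\mathcal{K}=\ELL$.

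For the implication from ii) to i), I would apply Lemma \ref{lem:proyections-K} to the ambient space $c_0(X)$, using the natural truncation projections $P_m\colon c_0(X)\longrightarrow\ell_\infty^m(X)\subseteq c_0(X)$ and the canonical isometric inclusions $\iota_m\colon\ell_\infty^m(X)\longrightarrow c_0(X)$, which satisfy $\|P_m\|=\|\iota_m\|=1$ and $P_m\circ\iota_m=\mathrm{Id}_{\ell_\infty^m(X)}$. Given $\delta>0$, functionals $x_1^*,\dots,x_n^*\in B_{c_0(X)^*}\equiv B_{\ell_1(X^*)}$ and a point $x_0\in S_{c_0(X)}$, the tails of the $x_j^*$ in $\ell_1(X^*)$ and the tail of $x_0$ in $c_0(X)$ tend to $0$; hence for $m$ large enough one has $\|P_m^*x_j^*-x_j^*\|<\delta$ for every $j$ and $\|\iota_m(P_mx_0)-x_0\|<\delta$. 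Since hypothesis ii) supplies precisely condition iv) of Lemma \ref{lem:proyections-K} with a single function $\eta$ serving all $m$, that lemma yields the BPBp-RSE for compact operators of $(c_0(X),Y)$.

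For the converse, I would fix $m$ and regard $c_0(X)$ as the $c_0$-sum of the family $\{\ell_\infty^m(X),X,X,\dots\}$, where the first summand gathers the coordinates $1,\dots,m$ and the remaining ones are the copies of $X$ indexed by $i>m$; this sum is isometrically $c_0(X)$. Applying Lemma \ref{prop:stability-K} to this $c_0$-sum decomposition, the pair $(\ell_\infty^m(X),Y)$ inherits the BPBp-RSE for compact operators with the same function $\eta$ as $(c_0(X),Y)$. Because this $\eta$ is independent of $m$, it is the uniform function required in ii).

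Finally, for the ``Moreover'' clause I would first note that $\mathcal{K}(X,Y)=\ELL(X,Y)$ forces $\mathcal{K}(\ell_\infty^m(X),Y)=\ELL(\ell_\infty^m(X),Y)$ for every $m$, since any $T\in\ELL(\ell_\infty^m(X),Y)$ equals $\sum_{k=1}^m(TE_k)P_k$, a finite sum of compact operators (here $P_k$ and $E_k$ denote the coordinate projection and injection). Thus on these finite sums the BPBp-RSE and the BPBp-RSE for compact operators coincide. Now, if $(c_0(X),Y)$ or $(\ell_\infty(X),Y)$ has the BPBp-RSE, then viewing it as a $c_0$-sum, respectively an $\ell_\infty$-sum, with $\ell_\infty^m(X)$ singled out as a summand and invoking Lemma \ref{prop:stability-K} produces the BPBp-RSE of $(\ell_\infty^m(X),Y)$ with a uniform $\eta$; by the previous observation this is exactly condition ii) for compact operators, and the implication from ii) to i) concludes. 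The main point to watch is the correct identification of $c_0(X)$ and $\ell_\infty(X)$ as sums with $\ell_\infty^m(X)$ as a distinguished summand, together with the tail estimates giving conditions i) and ii) of Lemma \ref{lem:proyections-K}; the preservation of the RSE property under the norm-one maps involved is already built into the two cited lemmas, so the argument is largely assembly rather than conceptually hard. I note that the convexity hypotheses on $Y$ are not needed for the equivalence itself, only to fix the intended range of application.
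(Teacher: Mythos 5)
Your proposal is correct and follows essentially the same route as the paper, which proves this lemma by citing the argument of \cite[Lemma~3.2]{DGMM2018} together with Lemma \ref{lem:proyections-K} (applied with the truncation projections $P_m$ and inclusions $\iota_m$, exactly as you do) and Lemma \ref{prop:stability-K} (applied to the $c_0$- or $\ell_\infty$-sum decomposition with $\ell_\infty^m(X)$ as a distinguished summand); your handling of the ``Moreover'' clause via $\mathcal{K}(\ell_\infty^m(X),Y)=\ELL(\ell_\infty^m(X),Y)$ is the intended reduction. The only difference is that you spell out details the paper leaves implicit.
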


\begin{rem}
    If $Y$ is a uniformly convex or a $\C$-uniformly convex Banach space, then $Y$ does not contain an isomorphic copy of $c_0$, so in this cases we have $\KP(c_0, Y)=\ELL(c_0,Y)$ (see the proof of \cite[Theorem~2.4.11]{AK}, for example). Indeed, when $Y$ is uniformly convex in particular it is reflexive and, as reflexivity is inherited by closed subspaces, it cannot contain any copy of $c_0$. When $Y$ is $\C$-uniformly convex we can derive this fact from the proof of \cite[Proposition 2.2]{A2016}, for example. Therefore, in this case the BPBp-RSE for compact operators is just the BPBp-RSE. 
\end{rem}

\begin{theorem}
    Let $L$ be a locally compact Hausdorff topological space and let $Y$ be a complex Banach space. If $Y$ is $\C$-uniformly convex, then the subspace $\KP(C_0(L), Y)$ has the BPBp-RSE.
\end{theorem}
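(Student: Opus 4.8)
The plan is to obtain the result from the abstract transference lemma, Lemma \ref{lem:proyections-K}, applied with $X=C_0(L)$, after first recording that the finite-dimensional building blocks $(\ell_\infty^m,Y)$ enjoy the BPBp-RSE for compact operators with a \emph{single} modulus $\eta$ independent of $m$. Note that the shortcut used in the real case (Theorem \ref{teo:C_0(L) compact real case}), namely citing the BPBp for compact operators and then invoking Corollary \ref{cor:strBPBp}, is unavailable here: a $\C$-uniformly convex space need not satisfy $\str(B_Y)=S_Y$ (e.g.\ $Y=\ell_1$), so the full machinery is genuinely needed. The key geometric input is that $C_0(L)$ carries, for each finite family of data, a norm-one projection whose range is \emph{isometric} to some $\ell_\infty^m$, which is precisely what lets the hypotheses of Lemma \ref{lem:proyections-K} collapse to a uniform estimate on sequence spaces.

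First I would fix the uniform modulus. Taking the one-dimensional (hence uniformly convex) scalar field, the pair $(c_0,Y)=(c_0(\K),Y)$ has the BPBp-RSE by Theorem \ref{teo:c_0 sum}, since $Y$ is $\C$-uniformly convex. As $\K$ is finite dimensional we have $\KP(\K,Y)=\ELL(\K,Y)$, so the ``moreover'' part of Lemma \ref{lem:c0 -K} applies and produces a function $\eta\colon\R^+\longrightarrow\R^+$ such that every pair $(\ell_\infty^m,Y)$, $m\in\N$, has the BPBp-RSE for compact operators with this common $\eta$. Because $\ell_\infty^m$ is finite dimensional, $\KP(\ell_\infty^m,Y)=\ELL(\ell_\infty^m,Y)$, so this is simply the BPBp-RSE of $(\ell_\infty^m,Y)$ with uniform modulus $\eta$. (Equivalently, one may note that a $\C$-uniformly convex space contains no copy of $c_0$, whence $\KP(c_0,Y)=\ELL(c_0,Y)$, and deduce the same uniform $\eta$ from Lemma \ref{lem:c0 -K}.)

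Next I would verify the geometric hypothesis of Lemma \ref{lem:proyections-K} for $X=C_0(L)$, following the passage from sequence spaces to function spaces of \cite{DGMM2018}. Given $\delta>0$, measures $\mu_1,\dots,\mu_n\in B_{M(L)}$ (identifying $C_0(L)^*\equiv M(L)$) and $f_0\in S_{C_0(L)}$, inner regularity yields a compact $K\subseteq L$ carrying all but $\delta$ of each total variation $|\mu_j|$; covering $K$ by finitely many open sets on which each $\mu_j$ has small variation and $f_0$ oscillates little, one builds a peaked partition of unity $\varphi_1,\dots,\varphi_m$, with points $t_k$ satisfying $\varphi_j(t_k)=\delta_{jk}$, and sets $Pf=\sum_{k=1}^m f(t_k)\varphi_k$. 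Then $P$ is a norm-one projection, its range $\spann\{\varphi_1,\dots,\varphi_m\}$ is isometric to $\ell_\infty^m$ (by the peaking), the inclusion $i$ of this range into $C_0(L)$ has norm one with $P\circ i=\mathrm{Id}_{P(C_0(L))}$, and the smallness of the cover gives $\|P^*\mu_j-\mu_j\|<\delta$ and $\|i(Pf_0)-f_0\|<\delta$. Condition (iv) of Lemma \ref{lem:proyections-K} then holds with the uniform $\eta$ of the previous step, and Lemma \ref{lem:proyections-K} delivers the BPBp-RSE of $\KP(C_0(L),Y)$.

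The main obstacle lies in the construction of $P$: one must arrange simultaneously that $\|P\|=1$, that the range is \emph{isometrically} (not merely isomorphically) $\ell_\infty^m$ so that the common modulus $\eta$ transfers unchanged through condition (iv), and that one partition of unity controls all the prescribed measures together with $f_0$. The isometry is the delicate point and is exactly where the peaking $\varphi_j(t_k)=\delta_{jk}$ is required; the remaining estimates form a routine regularity-and-partition-of-unity argument, carried out in \cite{DGMM2018}, and since this construction on $C_0(L)$ involves no operators it transfers to our setting verbatim. The fact that the resulting $S=\tilde S P$ is again range strongly exposing and compact is already built into Lemma \ref{lem:proyections-K}: the normalisation $\|P\|=1$ forces $S\in\RSE(C_0(L),Y)$ whenever $\tilde S\in\RSE(P(C_0(L)),Y)$, and $S$ is compact whenever $\tilde S$ is.
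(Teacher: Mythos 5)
Your proposal is correct and follows essentially the same route as the paper's proof: establish the BPBp-RSE of $(c_0,Y)$ via Theorem \ref{teo:c_0 sum} (or Corollary \ref{teo:L_infty char}), extract a uniform modulus $\eta$ for the pairs $(\ell_\infty^m,Y)$ through Lemma \ref{lem:c0 -K}, and then verify the hypotheses of the transference Lemma \ref{lem:proyections-K} for $X=C_0(L)$ using the peaked-partition-of-unity construction of \cite[Lemma~3.4]{DGMM2018}. The only difference is presentational: you spell out the construction of the norm-one projection onto an isometric copy of $\ell_\infty^m$, which the paper delegates entirely to the citation.
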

\begin{proof}
    If $Y$ is $\C$-uniformly convex, then from the previous remark $\KP(c_0,Y)=\ELL(c_0, Y)$ and it satisfies the BPBp-RSE by Theorems \ref{teo:L_infty char} or \ref{teo:c_0 sum} applied to the particular case of $c_0$. By Lemma \ref{lem:c0 -K}, there is a function $\eta:\R^+\longrightarrow \R^+$ such that the pairs $(\ell_\infty^m(X), Y)$ with $m\in\N$ have the BPBp-RSE for compact operators with the function $\eta.$ Using \cite[Lemma~3.4]{DGMM2018}, the hypotheses of Lemma \ref{lem:proyections-K} are satisfied for $X=C_0(L)$, so the subspace $\KP(C_0(L), Y)$ has the BPBp-RSE.
\end{proof}

This result together with Theorem \ref{prop:M summand} yield another characterisation.
\begin{cor}
    Let $Y$ be a complex Banach space and $(\Omega, \Sigma, \mu)$ a measure space. Then, the subspace $\KP(L_\infty(\mu),Y)$ has the BPBp if and only if $Y$ is $\C$-uniformly convex.
\end{cor}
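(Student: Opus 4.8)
The plan is to derive the corollary from the theorem immediately preceding it (for the sufficiency) and from the $M$-summand characterisation in Theorem~\ref{prop:M summand} (for the necessity), after first rewriting $L_\infty(\mu)$ as a $C_0(L)$ space.

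For the ``if'' part, I would begin by recalling that $L_\infty(\mu)$ is a commutative $C^*$-algebra with unit (the constant function $1$), so by the Gelfand--Naimark representation it is isometrically isomorphic to $C(K)$, where $K$ is its maximal ideal space, a compact Hausdorff space. Since a compact space is locally compact and $C(K)=C_0(K)$, the theorem preceding this corollary applies with $L=K$: when $Y$ is $\C$-uniformly convex, $\KP(C_0(K),Y)$ has the BPBp-RSE. As an isometric isomorphism of the domains induces an isometric identification $\KP(L_\infty(\mu),Y)\cong\KP(C(K),Y)$ which carries RSE operators to RSE operators, it follows that $\KP(L_\infty(\mu),Y)$ has the BPBp-RSE.

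For the ``only if'' part, I would use that $\KP(L_\infty(\mu),Y)$ contains all finite-rank, hence all rank-two, operators, so the extension of Theorem~\ref{prop:M summand} to subspaces containing all rank-two operators (noted at the beginning of this section) is at our disposal. Under the blanket hypothesis $\dim L_\infty(\mu)\ge 2$ there is $A\in\Sigma$ with $\mu(A)>0$ and $\mu(\Omega\setminus A)>0$; this yields the non-trivial $M$-summand decomposition $L_\infty(\mu)=L_\infty(\mu|_A)\oplus_\infty L_\infty(\mu|_{\Omega\setminus A})$. Feeding this decomposition into Theorem~\ref{prop:M summand} (in its subspace form) forces $Y$ to be $\C$-uniformly convex, which closes the equivalence.

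Both ingredients are standard structural facts — the representation $L_\infty(\mu)\cong C(K)$ and the splitting into two $M$-summands of positive measure — so I do not expect a genuine analytic obstacle. The only point demanding a little care is verifying that the statement of the preceding theorem, phrased for $C_0(L)$, really covers $L_\infty(\mu)$ (it does, since $K$ is compact and hence locally compact) and that the subspace version of Theorem~\ref{prop:M summand} legitimately applies to the compact ideal $\KP$; everything else is a matter of invoking these results in the right order.
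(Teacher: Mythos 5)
Your proposal is correct and follows essentially the same route as the paper, which derives the corollary by combining the immediately preceding theorem on $\KP(C_0(L),Y)$ (via the standard identification $L_\infty(\mu)\cong C(K)$) with the subspace version of Theorem~\ref{prop:M summand} applied to the splitting $L_\infty(\mu)=L_\infty(\mu|_A)\oplus_\infty L_\infty(\mu|_{\Omega\setminus A})$. Note only that the corollary's statement should be read as ``BPBp-RSE'' (as you implicitly do), since the necessity argument via Theorem~\ref{prop:M summand}(i) uses the RSE version of the property.
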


We finish this section with another application of these techniques when the domain is an $\ell_1$-predual, that is, when $X^*$ is isometrically isomorphic to $\ell_1$. In this case we write $X^*\equiv \ell_1$.

\begin{theorem}
    Let $X$ be a Banach space such that $X^*\equiv \ell_1$ and let $Y$ be a Banach space. If $Y$ is uniformly convex or, in the complex case, if $Y$ is $\C$-uniformly convex, then the subspace $\KP(X, Y)$ has the BPBp-RSE. 
\end{theorem}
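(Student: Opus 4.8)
The plan is to deduce the statement from Lemma~\ref{lem:proyections-K}, mirroring the $C_0(L)$ argument, by equipping the $\ell_1$-predual $X$ with finite-dimensional $\ell_\infty$-subspaces carrying contractive projections. First I would dispose of hypothesis (iv) once and for all. Since $Y$ is uniformly convex (real or complex) or $\C$-uniformly convex (complex), it contains no isomorphic copy of $c_0$, so by the Remark preceding the statement $\KP(c_0,Y)=\ELL(c_0,Y)$, and this space has the BPBp-RSE by Corollary~\ref{teo:L_infty char} in the complex case and by Theorem~\ref{teo:c_0 real} in the real case. Feeding this into Lemma~\ref{lem:c0 -K} applied with scalar fibre (so that the $c_0(X)$ there is $c_0$ and the $\ell_\infty^m(X)$ are the $\ell_\infty^m$) produces a single function $\eta\colon\R^+\longrightarrow\R^+$ for which every pair $(\ell_\infty^m,Y)$, $m\in\N$, has the BPBp-RSE for compact operators with this common $\eta$. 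As the BPBp-RSE is an isometric invariant, the same $\eta$ serves for any subspace of $X$ isometric to some $\ell_\infty^m$.

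It then remains to verify hypotheses (i)--(iii) of Lemma~\ref{lem:proyections-K}: for every $\delta>0$, every $x_1^*,\dots,x_n^*\in B_{X^*}$ and every $x_0\in S_X$ I must produce a subspace $E\subseteq X$ isometric to some $\ell_\infty^m$ and a norm-one projection $P\colon X\to E$ with $\|P^*x_j^*-x_j^*\|<\delta$ for all $j$ and $\|Px_0-x_0\|<\delta$ (taking $i$ to be the inclusion $E\hookrightarrow X$ makes $P\circ i=\Id_E$ automatic). Here I would invoke the structure of separable $\ell_1$-preduals: since $X^*\equiv\ell_1$ is separable, so is $X$, and by the Lazar--Lindenstrauss representation one can write $X=\overline{\bigcup_m E_m}$ with $E_1\subseteq E_2\subseteq\cdots$, each $E_m$ isometric to some $\ell_\infty^{d_m}$, together with compatible norm-one projections $P_m\colon X\to E_m$ (so $P_mP_{m'}=P_{\min\{m,m'\}}$). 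Density of $\bigcup_m E_m$ and $\|P_m\|=1$ give $P_m\to\Id$ in the strong operator topology of $X$, hence $\|P_mx_0-x_0\|\to0$, which settles (ii) for large $m$. Granting the dual statement $\|P_m^*f-f\|\to0$ for every $f\in X^*$, hypothesis (i) holds simultaneously for all $j$ once $m$ is large, and one concludes via Lemma~\ref{lem:proyections-K} that $\KP(X,Y)$ has the BPBp-RSE; the compactness hypothesis enters only through that lemma, whose conclusion is already tailored to $\KP(X,Y)$.

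The hard part is exactly this dual norm convergence $\|P_m^*f-f\|\to0$, and I would isolate it as a standalone lemma about $\ell_1$-preduals. A priori $P_m^*\to\Id$ only weak-$*$ (since $P_m\to\Id$ strongly), and weak-$*$ density does not imply norm density in a dual space; this is precisely where $X^*\equiv\ell_1$ must be used. The compatibility of the $P_m$ makes $F_m:=\operatorname{ran}(P_m^*)$ an increasing chain of $1$-complemented subspaces isometric to $\ell_1^{d_m}$ with weak-$*$ dense union, and a short computation gives $\|P_m^*f-f\|\le 2\operatorname{dist}(f,F_m)$ together with $\|P_m^*f\|=\|f|_{E_m}\|\nearrow\|f\|$; thus the issue reduces to showing that $\bigcup_m F_m$ is \emph{norm} dense in $\ell_1$, equivalently that $(F_m)$ is a (monotone) finite-dimensional decomposition of $X^*=\ell_1$. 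I expect this to be the main obstacle, and the place where the fine theory enters: the Lazar--Lindenstrauss blocks are not arbitrary contractive projections but arise from a nested exhaustion whose adjoints exhaust $\ell_1$ in norm. The mechanism is the weak-$*$ Kadec--Klee behaviour of $\ell_1$ (weak-$*$ convergence together with convergence of norms forces norm convergence, the error being controlled by a tail mass that tends to $0$); the instructive model is $X=c$, where $P_m^*$ absorbs the tail of $f\in\ell_1$ onto the limit coordinate and one checks directly that $\|P_m^*f-f\|$ is at most twice that tail. After this lemma is in place, the theorem follows by the bookkeeping above.
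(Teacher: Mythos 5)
Your reduction coincides with the paper's up to the last step: both arguments first observe that $\KP(c_0,Y)=\ELL(c_0,Y)$ has the BPBp-RSE (Theorem \ref{teo:c_0 real} and Corollary \ref{teo:L_infty char}), feed this into Lemma \ref{lem:c0 -K} to obtain a single function $\eta$ for all pairs $(\ell_\infty^m,Y)$, and then aim to verify the hypotheses of Lemma \ref{lem:proyections-K} for the $\ell_1$-predual $X$. The paper closes that last step by invoking the proof of \cite[Theorem~3.6]{DGMM2018}, which supplies exactly the required projections. You instead try to rebuild it from the Lazar--Lindenstrauss representation, and the step you yourself flag as the main obstacle --- $\|P_m^*f-f\|\to 0$ for every $f\in X^*$, which is what hypothesis (i) of Lemma \ref{lem:proyections-K} needs --- is left unproved. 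This is a genuine gap, and the mechanism you propose for closing it does not work: the relevant topology is $\sigma(X^*,X)$, not $\sigma(\ell_1,c_0)$, and $\ell_1$ fails the weak-$*$ Kadec--Klee property for $\sigma(X^*,X)$ in general. Already for $X=c$ the coordinate functionals $e_n$ converge weak-$*$ to the limit functional $\delta_{\lim}$ with $\|e_n\|=1=\|\delta_{\lim}\|$, yet $\|e_n-\delta_{\lim}\|=2$.

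Worse, the statement you propose to ``grant'' is actually false for some admissible choices of the system, so it is not merely an unproved true lemma. Take $X=c$, $E_m=\{x\in c: x_i=x_m\ \forall i\ge m\}\cong\ell_\infty^m$ and the compatible norm-one projections $P_mx=(x_1,\dots,x_m,x_m,\dots)$; then the $E_m$ increase with dense union and $P_m\to\Id$ strongly, but writing $f\in c^*\equiv\ell_1$ as $f(x)=f_0\lim_i x_i+\sum_{i\ge1}f_ix_i$ one computes that $P_m^*f$ has vanishing limit-coefficient, whence $\|P_m^*f-f\|\ge|f_0|$ for every $m$. So norm convergence of the adjoints does not follow from density plus compatibility: one has to construct a \emph{shrinking} system of projections (for $c$, the projections built from $\lim x$ rather than from the $m$-th coordinate), and producing such a system for a general separable isometric $\ell_1$-predual is precisely the nontrivial content that the proof of \cite[Theorem~3.6]{DGMM2018} packages. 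Note also that Lemma \ref{lem:proyections-K} only asks for one projection adapted to each finite set of functionals and each point, not for a single nested sequence, which is how the cited argument proceeds. Until that construction, or the citation replacing it, is supplied, your proof is incomplete at its decisive step.
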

\begin{proof}
    By Theorem \ref{teo:c_0 real} and Corollary \ref{teo:L_infty char} the subspace $\ELL(c_0, Y)=\KP(c_0, Y)$ has the BPBp-RSE, so by Lemma \ref{lem:c0 -K} and the proof of \cite[Theorem~3.6]{DGMM2018} $X$ and $Y$ satisfy the conditions in Lemma \ref{lem:proyections-K}.
\end{proof}

We do not know if the condition of (complex) uniform convexity of $Y$ in the previous theorem is also necessary.

\subsection{Range spaces with ACK$_\rho$ structure}\label{sec:ACK}
The notions of ACK$_\rho$ structure and $\Gamma$-flat operators were introduced in \cite{CGKS2018} in order to abstract all the technicalities that make possible to prove a Bollobás-type theorem for Asplund operators acting to a space with the same structural properties of $C(K)$ and its uniform algebras. Indeed, the name ``ACK structure" comes from the words ``Asplund" and ``$C(K)$".

It is said that a Banach space $Y$ has \textit{ACK$_\rho$ structure} for some $\rho\in [0, 1)$ \cite[Definition 3.3]{CGKS2018} if there exists a 1-norming set $\Gamma\subseteq B_{Y^*}$ such that for every $\eps>0$ and every nonempty relatively $w^*$-open subset $U\subseteq \Gamma$ there exist a nonempty subset $V\subseteq U, y_1^*\in V, e\in S_Y$ and an operator $F\in \ELL(Y, Y)$  with the following properties:
\begin{enumerate}
    \item $\|Fe\|=\|F\|=1$,
    \item $y_1^*(Fe)=1$,
    \item $F^*y_1^*=y_1^*$,
    \item if we denote $V_1=\{y^*\in \Gamma :\|F^*y^*\|+(1-\eps)\|(Id_{Y^*}-F^*)(y^*)\|\leq 1\}$, then $|y^*(Fe)|\leq \rho$ for every $y^*\in \Gamma\backslash V_1$,
    \item $\dist(F^*y^*, \aconv\{0, V\})<\eps$ for every $y^*\in \Gamma$ and
    \item $|v^*(e)-1|\leq \eps$ for every $v^*\in V$.
\end{enumerate}

Banach spaces with ACK$_\rho$ structure include $C(K)$ for any compact Hausdorff topological space $K$ and its uniform algebras \cite[Corollary 4.6]{CGKS2018}, and spaces with property $\beta.$ Recall that a Banach space $Y$ satisfies property $\beta$ if there exist two sets $\{y_\alpha : \alpha\in \Lambda\}\subseteq S_Y , \{y_\alpha^* : \alpha\in \Lambda\} \subseteq S_{Y^*}$ and $0\leq \rho <1$ such that
 \begin{enumerate}
     \item $y_\alpha^*(y_\alpha)=1$,
     \item $|y_\alpha^*(y_\beta)|\leq \rho <1$ if $\alpha\neq \beta$ and
     \item $\|y\|=\sup_\alpha \{|y_\alpha^*(y)|\}$ for every $y\in Y.$
\end{enumerate}
If $Y$ satisfies this property then it has ACK$_\rho$ structure with the same value of $\rho$ and with the 1-norming set $\Gamma=\{y_\alpha^* : \alpha\in \Lambda\}$ \cite[Theorem 4.9]{CGKS2018}.

On the other hand, given two Banach spaces $X$ and $Y$ and a subset $\Gamma \subseteq Y^*$, an operator $T\in \ELL(X, Y)$ is said to be \textit{$\Gamma$-flat} \cite[Definition 2.8]{CGKS2018} if the restriction $T^*|_\Gamma : (\Gamma, w^*)\longrightarrow (X^*, \|\cdot\|_{X^*})$ is openly fragmented (see \cite[Definition 2.6]{CGKS2018}). The notation $\Fl_\Gamma (X, Y)$ will stand for the set of all $\Gamma$-flat operators between $X$ and $Y$. 

A linear subspace $\mathcal I\subseteq \Fl_\Gamma(X, Y)$ is said to be a \textit{$\Gamma$-flat ideal} \cite[Definition 3.3]{CGKS2018} if all elements of $\mathcal I$ are $\Gamma$-flat operators, $\mathcal I$ contains all the operators of finite rank and for every $T\in \mathcal I$ and every $F\in \ELL(Y, Y)$ the composition $F\circ T\in \mathcal I.$ It is known that the set $\mathcal A(X, Y)$ of Asplund operators is a $\Gamma$-flat ideal for every $\Gamma\subseteq Y^*$ \cite[Example A]{CGKS2018}. In particular, finite rank, compact and weakly compact operators are $\Gamma$-flat ideals for every $\Gamma\subseteq Y^*.$ It is also known that $\ELL(X, Y)=\Fl_\Gamma(X, Y)$ when $Y$ has property $\beta$ for the set $\Gamma=\{y_\alpha^* : \alpha\in \Lambda\}$ of the definition of this property (apply that $(\Gamma, w^*)$ is discrete and \cite[Example C]{CGKS2018}). 

In \cite[Theorem 3.4]{CGKS2018} it was proved, in particular, that if $Y$ has ACK$_\rho$ structure with corresponding 1-norming set $\Gamma\subseteq Y^*$ then any $\Gamma$-flat ideal $\mathcal I\subseteq \Fl_\Gamma(X, Y)$ satisfies the BPBp for any Banach space $X$. This generalizes some previously known results in the literature: if $Y$ has property $\beta$, then $(X, Y)$ has the BPBp for every $X$ \cite[Theorem 2.2]{AAGM2008}, any subideal $\mathcal I(X, Y)\subseteq \mathcal A(X, Y)$ has the BPBp whenever $Y=C(K)$ \cite[Corollary 2.5]{ACK2011} and whenever $Y$ is a uniform algebra $\mathfrak{U}\subseteq C(K)$ \cite[Theorem 3.6]{CGK2013}.

However, the RSE version of all these results fails to be true, as shown in Example \ref{ex: ell_1^2}: in the real case the space $\ell_1^2\equiv \ell_\infty^2$ satisfies property $\beta$, it is a $C(K)$-space and every operator in $\ELL(\ell_1^2, \ell_1^2)$ is Asplund, but the pair $(\ell_1^2, \ell_1^2)$ does not satisfy the BPBp-RSE by Proposition \ref{prop:l1sum}. Thus, in order to obtain some analogous results to the previously mentioned ones we need some conditions on the domain. For the case of the BPBp-ASE, in \cite[Theorem 3.13]{JMR2023} the authors showed that property [P] of the domain is enough. By Proposition \ref{prop:property P is necessary} it is also necessary, so in this case we have a complete characterisation.

\begin{theorem}\label{teo:ACK ASE}
    Let $X, Y$ be Banach spaces such that $Y$ has ACK$_\rho$ structure with the corresponding 1-norming set $\Gamma\subseteq B_{Y^*}$, and let $\mathcal I(X, Y)\subseteq \Fl_\Gamma(X, Y)$ be a $\Gamma$-flat ideal. Then $\mathcal I(X, Y)$ satisfies the BPBp-ASE if and only if $X$ has property [P]. 
\end{theorem}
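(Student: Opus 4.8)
The plan is to treat the two implications separately, since the reverse direction is essentially a citation. For the necessity direction, the key observation is that a $\Gamma$-flat ideal $\mathcal I(X,Y)$ contains all finite rank operators by definition, and in particular all rank one operators. Hence Proposition \ref{prop:property P is necessary} applies to $\mathcal M=\mathcal I(X,Y)$ verbatim: if $\mathcal I(X,Y)$ has the BPBp-ASE, then $X$ has property [P]. No new work is required here.

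For the sufficiency direction, which is the substantial half, the sufficiency of property [P] is precisely the content of \cite[Theorem~3.13]{JMR2023}, so it suffices to invoke it. If one wanted to reconstruct the argument, the plan would be to combine three ingredients: the classical BPBp for $\Gamma$-flat ideals with ACK$_\rho$ range \cite[Theorem~3.4]{CGKS2018}, property [P] of the domain, and the upgrading device of Proposition \ref{prop: aproximate NA by RSE or ASE}. Given $\eps>0$, one starts with $T\in S_{\mathcal I(X,Y)}$ and $x\in S_X$ such that $\|Tx\|$ is close to $1$, applies \cite[Theorem~3.4]{CGKS2018} to obtain a norm-attaining $S_1\in\mathcal I(X,Y)$ with $\|S_1\|=1$ attaining its norm at some $x_1\in S_X$, with $S_1$ close to $T$ and $x_1$ close to $x$. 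The aim is then to replace the pair $(S_1,x_1)$ by a pair $(S,x_0)$ with $S\in\ASE(X,Y)\cap\mathcal I(X,Y)$, at only a small cost in the approximation constants.

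The mechanism for this replacement, and the main obstacle, is the following. Choosing $y^*\in S_{Y^*}$ with $\re y^*(S_1x_1)=1$, the functional $S_1^*y^*\in S_{X^*}$ attains its norm at $x_1$; property [P] then supplies $z^*\in\SE(B_X)$ and a strongly exposed point $x_0\in\str(B_X)$ with $z^*$ close to $S_1^*y^*$ and $x_0$ close to $x_1$ (hence close to $x$). The hard part is to manufacture from $S_1$ a norm-one operator that attains its norm exactly at the strongly exposed point $x_0$ while staying close to $S_1$; this is achieved by a rank-one correction aligning $S_1^*y^*$ with $z^*$, and one must verify that this keeps the operator in $\mathcal I(X,Y)$ (automatic, since finite rank operators belong to the ideal), does not increase the norm, and does not move the attaining point away from $x_0$. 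Once one has such an operator attaining its norm at $x_0\in\str(B_X)$, Proposition \ref{prop: aproximate NA by RSE or ASE} (together with the stability of $\str(B_X)$ recorded in Lemma \ref{lem:str-exp}) produces an operator in $\ASE(X,Y)\cap\mathcal I(X,Y)$ attaining its norm at $x_0$ and close to it, which completes the proof.
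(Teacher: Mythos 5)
Your proposal is correct and matches the paper exactly: the paper also proves this theorem by citing \cite[Theorem~3.13]{JMR2023} for the sufficiency of property [P] and Proposition \ref{prop:property P is necessary} for its necessity (which applies because a $\Gamma$-flat ideal contains all finite rank, hence all rank one, operators). Your optional reconstruction sketch of the sufficiency half is rougher than the actual argument in \cite{JMR2023} (which builds the new operator via the operator $F$ from the ACK$_\rho$ structure rather than a plain rank-one perturbation of $S_1$), but since the proof you actually offer is the citation, this does not affect correctness.
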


The following definition is motivated by property [P], and the name stands for ``P-weak". Recall that if $x^*\in \smo(X^*)$ attains its norm at $x_0$ and $$\re x^*(x_n) \longrightarrow \|x^*\|=x^*(x_0)$$ then $x_n$ converges to $x_0$ in the weak topology of $X.$

\begin{defi}
     A Banach space $X$ is said to satisfy property [P$_w$] if for every $\eps >0$ there exists $\eta (\eps)>0$ such that for every $x^*\in S_{X^*}$ and every $x\in S_X$ satisfying 
    \begin{equation*}
        \re x^*(x)>1-\eta(\eps)
    \end{equation*}
    there exist $y^*\in\smo(X^*)$ and $y\in S_X$ such that
    \begin{equation*}
        y^*(y)=1, \quad \|x^*-y^*\|<\eps \quad\text{and}\quad \|x-y\|<\eps.
    \end{equation*}
\end{defi} 

\begin{example}
Every strictly convex Banach space satisfies property [P$_w$]; just observe that such spaces satisfy $\NA(X)\subseteq \smo(X^*)$ and apply the classical Bishop-Phelps-Bollobás theorem. Moreover, any Banach space $X$ such that the norm of $X^*$ is G\^{a}teaux differentiable at every point but nowhere Fr\'echet differentiable satisfies property [P$_w$] but not property [P] (indeed, it does not have any strongly exposed points). This shows that property [P$_w$] is strictly weaker than property [P].
\end{example}  

\begin{rem}\label{rem: uniform modulus P-Pw}
If a Banach space $X$ satisfies property [P] or [P$_w$], it can be proved applying the classical Bishop-Phelps-Bollobás theorem that the function $\eta$ in the definition can be taken to be $\eta(\eps)=\frac{\eps^2}{2}$. In view of the proof of \cite[Theorem 3.13]{JMR2023}, this shows that the function $\eta$ of the Bollobás approximation by ASE operators in this result only depends on the constant $\rho$ of the ACK$_\rho$ structure of the range.
\end{rem}

The key to use property [P$_w$] to approximate bounded linear operators by operators in $\RSE(X, Y)$ is contained in the following lemma. The statement is slightly different from \cite{CDFJM2025}, but it is exactly what is proved there.
\begin{lem}\cite[Lemma~4.2]{CDFJM2025}\label{lem:smo}
    Let $X, Y$ be Banach spaces and $\mathcal{I}\subseteq \ELL(X, Y)$ any linear subspace containing all rank-one operators such that $\mathcal I\subseteq \mathcal{DP}(X,Y)$. Suppose that $T\in \mathcal{I}(X, Y)$ satisfies $\|T\| = \|Tx_0 \|=1$ and $y_0^*\in S_{Y^*}$ is such that $\re y_0^*(Tx_0)=\|T\|$ and $T^* y_0^* \in \smo({X^*})$. Then, given $\varepsilon>0$, there is $S \in \RSE (X,Y)\cap \mathcal I(X, Y)$ such that $\|Sx_0\| = \|S\|=1$ and $\|S-T\|<\varepsilon$.
\end{lem}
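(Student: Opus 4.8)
The plan is to realise $S$ as an explicit rank-one perturbation of $T$ in the direction singled out by $y_0^*$, and then to verify the $\RSE$ condition by combining the smoothness of $T^*y_0^*$ (which yields weak convergence) with the Dunford--Pettis property of $\mathcal I$ (which upgrades it to norm convergence). First I would record that $x^*:=T^*y_0^*$ is a norm-one functional attaining its norm exactly at $x_0$: since $\re y_0^*(Tx_0)=1$ and $|y_0^*(Tx_0)|\le 1$, we get $x^*(x_0)=y_0^*(Tx_0)=1$, while $\|x^*\|\le\|T^*\|\|y_0^*\|=1$, so $\|x^*\|=1=x^*(x_0)$.

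Fix $0<\alpha<\eps/2$ and define $S\in\ELL(X,Y)$ by $Sx=Tx+\alpha\,y_0^*(Tx)\,Tx_0$. Then $S-T$ is rank one, so $S\in\mathcal I(X,Y)$ because $\mathcal I$ is a linear subspace containing all rank-one operators. A direct computation gives $Sx_0=(1+\alpha)Tx_0$, so $\|Sx_0\|=1+\alpha$, while for every $x\in B_X$,
\[
 \|Sx\|\le\|Tx\|+\alpha|y_0^*(Tx)|\le 1+\alpha ;
\]
hence $\|S\|=\|Sx_0\|=1+\alpha$. The core step is to show $S\in\RSE(X,Y)$ with witness $x_0$. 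Let $(x_n)\subseteq B_X$ satisfy $\|Sx_n\|\to 1+\alpha$. From $\|Sx_n\|\le\|Tx_n\|+\alpha|y_0^*(Tx_n)|$ together with $\|Tx_n\|\le 1$ and $|y_0^*(Tx_n)|\le 1$, the limit forces \emph{both} $\|Tx_n\|\to 1$ and $|y_0^*(Tx_n)|\to 1$. For $n$ large set $\theta_n:=y_0^*(Tx_n)/|y_0^*(Tx_n)|\in\T$ and $z_n:=\overline{\theta_n}x_n\in B_X$, so that $x^*(z_n)=|y_0^*(Tx_n)|\to 1=\|x^*\|$. Viewing $z_n,x_0\in B_X\subseteq B_{X^{**}}$ and applying \v{S}mulyan's lemma at the smooth point $x^*$ of $X^*$ (with the sequences $f_n=z_n$ and $g_n=x_0$ in $B_{X^{**}}$), we obtain $z_n-x_0\to 0$ in $\sigma(X^{**},X^*)$, i.e.\ $z_n\to x_0$ weakly in $X$. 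Since $T\in\mathcal I\subseteq\DP(X,Y)$, this gives $Tz_n\to Tx_0$ in norm.

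Finally I would assemble the limit: multiplying by $\overline{\theta_n}$,
\[
 \overline{\theta_n}Sx_n=Tz_n+\alpha\,|y_0^*(Tx_n)|\,Tx_0\longrightarrow Tx_0+\alpha Tx_0=Sx_0 .
\]
Passing to a subsequence along which $\theta_n\to\lambda\in\T$ (by compactness of $\T$) gives $Sx_{\sigma(n)}\to\lambda Sx_0$, which is exactly the $\RSE$ condition at $x_0$. Normalising $\widetilde S:=S/(1+\alpha)$ preserves norm-attainment at $x_0$ and the $\RSE$ property (which is invariant under positive scaling) as well as membership in $\mathcal I$; since $\|S-T\|=\alpha$ and $\|\widetilde S-S\|=\alpha$, we conclude $\|\widetilde S-T\|\le 2\alpha<\eps$.

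The main obstacle is the two-step passage from the hypotheses to the required norm convergence: smoothness of $T^*y_0^*$ alone only delivers weak convergence $z_n\to x_0$, so the Dunford--Pettis hypothesis on $\mathcal I$ is precisely what is needed to turn this into the norm convergence of $Tz_n$ demanded by the definition of $\RSE$. The remaining care is bookkeeping of the unimodular factors $\theta_n$ and the extraction of a convergent subsequence, together with checking that the perturbation genuinely forces $|y_0^*(Tx_n)|\to 1$ (and not merely $\|Tx_n\|\to 1$), which is what makes smoothness applicable.
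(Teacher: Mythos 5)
Your proof is correct: the rank-one perturbation $Sx=Tx+\alpha\,y_0^*(Tx)\,Tx_0$, followed by \v{S}mulyan's lemma at the smooth point $T^*y_0^*$ to get weak convergence and the Dunford--Pettis hypothesis to upgrade it to norm convergence, is exactly the argument behind the cited \cite[Lemma~4.2]{CDFJM2025} (the paper itself gives no proof, only the citation), and it is the same device used elsewhere in the paper (cf.\ Proposition \ref{prop: aproximate NA by RSE or ASE}). All the delicate points --- forcing $|y_0^*(Tx_n)|\to 1$ and not merely $\|Tx_n\|\to 1$, the unimodular normalisation $z_n=\overline{\theta_n}x_n$, and the final rescaling --- are handled correctly.
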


Now we are able to extend \cite[Theorem 3.4]{CGKS2018} to the BPBp-RSE for a wide class of operators including the ideals of compact, weakly compact and finite rank, assuming that the domain satisfies property [P$_w$]. 
\begin{theorem}\label{teo:ACK RSE}
    Let $X, Y$ be Banach spaces such that $Y$ has ACK$_\rho$ structure with the corresponding 1-norming set $\Gamma\subseteq B_{Y^*}$ and let $\mathcal I(X, Y)\subseteq \Fl_\Gamma(X, Y)\cap \DP(X, Y)$ be a $\Gamma$-flat ideal. If $X$ has property [P$_w$], then $\mathcal I(X, Y)$ satisfies the BPBp-RSE. Moreover, the funcion $\eta(\eps)$ only depends on the constant $\rho$ of the ACK$_\rho$ structure of $Y$.
\end{theorem}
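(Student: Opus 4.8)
The plan is to run the proof of Theorem \ref{teo:ACK ASE} — equivalently, the argument of \cite[Theorem~3.13]{JMR2023}, which itself refines the proof of \cite[Theorem~3.4]{CGKS2018} — making two substitutions throughout: property [P$_w$] plays the role that property [P] plays there, and Lemma \ref{lem:smo} (the step where the hypothesis $\mathcal I\subseteq\DP(X,Y)$ is used) plays the role of Proposition \ref{prop: aproximate NA by RSE or ASE}. It is worth noting why a direct two-step argument fails, since this motivates weaving the smoothing into the ACK construction. If one first applied the BPBp of \cite[Theorem~3.4]{CGKS2018} to get a norm-attaining $S_0$ and then tried to smooth the adjoint functional $S_0^*y_0^*$ by a rank-one perturbation, that perturbation would destroy exact norm attainment: one only controls $\|S\tilde x_0\|\geq 1$ and $\|S\|\leq 1+\delta$, so $S$ merely \emph{almost} attains its norm at $\tilde x_0$, whereas Lemma \ref{lem:smo} requires genuine equality $\|Sx_0\|=\|S\|$. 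The ACK$_\rho$ machinery is precisely what produces a genuinely norm-attaining operator while keeping the perturbation small, so the smoothing must be performed inside that construction.

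Concretely, I would start from $T\in S_{\mathcal I(X,Y)}$ and $x\in S_X$ with $\|Tx\|>1-\eta(\eps)$ and follow \cite[Theorem~3.4]{CGKS2018} to build a norm-attaining $S_0\in\mathcal I(X,Y)$ with $\|S_0\|=\|S_0x_0\|=1$ and $\|S_0-T\|$, $\|x_0-x\|$ small; the operator $S_0$ remains in $\mathcal I(X,Y)$ because the construction uses only finite-rank perturbations and left compositions $F\circ(\cdot)$ with $F\in\ELL(Y,Y)$, both of which preserve the $\Gamma$-flat ideal $\mathcal I$. At the step of that proof where a norm-attaining functional on $X$ is produced — the distinguished $y_0^*\in\Gamma$ yields $S_0^*y_0^*\in\NA(X)$ with $\re y_0^*(S_0x_0)=1$ — I would invoke property [P$_w$] (exactly as \cite[Theorem~3.13]{JMR2023} invokes [P]) to arrange instead that $S_0^*y_0^*\in\smo(X^*)$. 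Since property [P$_w$] has uniform modulus $\eta(\eps)=\frac{\eps^2}{2}$ by Remark \ref{rem: uniform modulus P-Pw}, independent of $X$, while the ACK$_\rho$ part of the construction contributes a modulus depending only on $\rho$, the resulting function $\eta$ depends only on $\rho$, giving the final assertion.

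It then remains to upgrade $S_0$ to an $\RSE$ operator. With $\|S_0\|=\|S_0x_0\|=1$, the functional $y_0^*\in S_{Y^*}$ satisfying $\re y_0^*(S_0x_0)=\|S_0\|$ and $S_0^*y_0^*\in\smo(X^*)$, and the ideal $\mathcal I\subseteq\DP(X,Y)$ containing all rank-one operators, Lemma \ref{lem:smo} yields $S\in\RSE(X,Y)\cap\mathcal I(X,Y)$ with $\|Sx_0\|=\|S\|=1$ and $\|S-S_0\|$ as small as desired. Taking this last perturbation small enough and combining the estimates gives $\|S-T\|<\eps$ and $\|x-x_0\|<\eps$, so $\mathcal I(X,Y)$ has the BPBp-RSE.

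The main obstacle I anticipate is the bookkeeping needed to insert property [P$_w$] at the correct moment in the construction of \cite[Theorem~3.4]{CGKS2018} without disturbing the remaining estimates, and to verify that only the weak conclusion of smoothness (weak convergence of maximizing sequences in $X$) is ever invoked there, so that [P] is genuinely not needed. The conceptual point is that the $\DP$ hypothesis on $\mathcal I$ is exactly what converts this weak convergence into the norm convergence of images $Sx_{\sigma(n)}\to\lambda Sx_0$ demanded by the definition of $\RSE$; this is the content of Lemma \ref{lem:smo}, and the reason the Dunford-Pettis assumption cannot be omitted from the hypotheses.
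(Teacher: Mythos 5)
Your proposal follows essentially the same route as the paper's proof: the paper likewise weaves property [P$_w$] into the ACK$_\rho$ construction (via an adaptation of \cite[Lemma~2.9]{CGKS2018} that produces a smooth functional $x_r^*\in\smo(X^*)$ at which the new operator's adjoint lands), obtains a genuinely norm-attaining $S$ with $S^*y_1^*=x_r^*$, and then applies Lemma \ref{lem:smo} to pass to an $\RSE$ operator inside $\mathcal I(X,Y)$, with the uniformity of $\eta$ in $\rho$ coming from Remark \ref{rem: uniform modulus P-Pw}. Your observation that the smoothing cannot be done as an after-the-fact rank-one perturbation, and that the $\DP$ hypothesis is what Lemma \ref{lem:smo} needs, matches the paper's reasoning exactly.
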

The proof is an adaptation of the argument in the cited result in a similar way as it was done in \cite[Theorem 3.13]{JMR2023}. The following can be obtained arguing like in \cite[Lemma~2.9]{CGKS2018} using property [P$_w$] instead of the Bishop-Phelps-Bollobás theorem. Notice that the constants are independent of the spaces $X, Y$ by Remark \ref{rem: uniform modulus P-Pw}.
\begin{lem}\label{lem: basic lemma Gamma flat}
    Let $X$ be a Banach space satisfying property [P$_w$] and let $Y$ be a Banach space. Let $\Gamma\subseteq B_{Y^*}$ be a 1-norming set, $T\in \ELL(X, Y)$ be a $\Gamma$-flat operator with $\|T\|=1, \eps>0$ and $x_0\in S_X$ such that $\|Tx_0\|>1-\frac{\eps^2}{2}$. Then, for every $r>0$ there exist
    \begin{itemize}
        \item[i)] A w*-open set $U_r\subseteq Y^*$ with $U_r\cap \Gamma\neq \emptyset$ and
        \item[ii)] $x_r^*\in \smo(X^*)\cap S_{X^*}$ and $u_r\in B_X$ such that $|x_r^*(u_r)|=1$, $\|T^*z^*-x_r^*\|<r+\eps+\frac{\eps^2}{2}$ and $\|u_r-x_0\|<\eps$ for every $z^*\in U_r\cap \Gamma.$
    \end{itemize}
\end{lem}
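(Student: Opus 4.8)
The plan is to mimic the proof of \cite[Lemma~2.9]{CGKS2018}, inserting property [P$_w$] at the point where that argument invokes the Bishop--Phelps--Bollob\'as theorem. First I would isolate the region of $\Gamma$ on which $T^*$ nearly carries the mass of $x_0$. Since $\Gamma$ is $1$-norming and $\|Tx_0\|>1-\tfrac{\eps^2}{2}$, the set
\begin{equation*}
    W:=\Bigl\{z^*\in\Gamma : |z^*(Tx_0)|>1-\tfrac{\eps^2}{2}\Bigr\}
\end{equation*}
is nonempty, and it is relatively $w^*$-open in $\Gamma$ because $z^*\mapsto z^*(Tx_0)$ is $w^*$-continuous. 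Then I would use the $\Gamma$-flatness of $T$: the map $T^*|_\Gamma\colon(\Gamma,w^*)\to(X^*,\|\cdot\|_{X^*})$ is openly fragmented (\cite[Definition~2.6]{CGKS2018}), so applied to the relatively open witness set $W$ with fragmentation parameter $r$ it yields a $w^*$-open set $U_r\subseteq Y^*$ with $\emptyset\neq U_r\cap\Gamma\subseteq W$ and $\diam\bigl(T^*(U_r\cap\Gamma)\bigr)<r$; in particular $\|T^*z^*-T^*z_1^*\|<r$ for all $z^*,z_1^*\in U_r\cap\Gamma$. This produces the set in item (i).

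Next I would fix any $z_0^*\in U_r\cap\Gamma$ and build the smooth functional from it. Set $w^*:=T^*z_0^*/\|T^*z_0^*\|\in S_{X^*}$, which is legitimate since $\|T^*z_0^*\|\geq|z_0^*(Tx_0)|>1-\tfrac{\eps^2}{2}>0$, and pick $\alpha\in\T$ with $\alpha\,w^*(x_0)=|w^*(x_0)|>1-\tfrac{\eps^2}{2}$, so that $\re w^*(\alpha x_0)>1-\tfrac{\eps^2}{2}$. By Remark~\ref{rem: uniform modulus P-Pw} property [P$_w$] holds with $\eta(\eps)=\tfrac{\eps^2}{2}$, so applying it to the pair $(w^*,\alpha x_0)$ gives $y^*\in\smo(X^*)\cap S_{X^*}$ and $y\in S_X$ with $y^*(y)=1$, $\|w^*-y^*\|<\eps$ and $\|\alpha x_0-y\|<\eps$. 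Putting $x_r^*:=y^*$ and $u_r:=\bar\alpha\,y$, I obtain $x_r^*\in\smo(X^*)\cap S_{X^*}$, $|x_r^*(u_r)|=|y^*(y)|=1$, and $\|u_r-x_0\|=\|y-\alpha x_0\|<\eps$, which are the two requirements in (ii) not involving $U_r$.

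Finally I would combine the estimates. Since $\|T^*z_0^*-w^*\|=1-\|T^*z_0^*\|<\tfrac{\eps^2}{2}$, the triangle inequality gives
\begin{equation*}
    \|T^*z_0^*-x_r^*\|\leq\|T^*z_0^*-w^*\|+\|w^*-y^*\|<\tfrac{\eps^2}{2}+\eps,
\end{equation*}
whence for every $z^*\in U_r\cap\Gamma$,
\begin{equation*}
    \|T^*z^*-x_r^*\|\leq\|T^*z^*-T^*z_0^*\|+\|T^*z_0^*-x_r^*\|<r+\eps+\tfrac{\eps^2}{2},
\end{equation*}
which is the last assertion of (ii). All constants depend only on $\eps$ and $r$, since the value $\eta(\eps)=\tfrac{\eps^2}{2}$ is forced by Remark~\ref{rem: uniform modulus P-Pw} independently of $X$ and $Y$, as claimed. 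The step I expect to demand the most care is the correct reading of open fragmentability: it must be applied to the \emph{relatively open} set $W$ rather than to a single functional, so that the resulting $U_r\cap\Gamma$ simultaneously lies inside $W$ (guaranteeing the existence of a point $z_0^*$ at which property [P$_w$] is applicable) and has norm-image of diameter below $r$. The rotation-and-normalisation bookkeeping through $\alpha$ and $w^*$ is routine, but it must be tracked precisely in order to land both $|x_r^*(u_r)|=1$ and $\|u_r-x_0\|<\eps$ at once.
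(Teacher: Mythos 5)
Your argument is correct and is exactly the route the paper intends: it reproduces the proof of \cite[Lemma~2.9]{CGKS2018}, using open fragmentability of $T^*|_\Gamma$ on the relatively $w^*$-open set $W$ to get $U_r$ with $\diam T^*(U_r\cap\Gamma)<r$, and substituting property [P$_w$] (with $\eta(\eps)=\eps^2/2$ from Remark~\ref{rem: uniform modulus P-Pw}) for the Bishop--Phelps--Bollob\'as theorem. The normalisation and rotation bookkeeping, and the final triangle-inequality estimate $r+\eps+\eps^2/2$, all check out.
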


\begin{proof}[Proof of Theorem \ref{teo:ACK RSE}]
    Given $\eps>0$, fix $0<\eps_0<\eps$ and take $\eps_1>0$ such that
    \begin{equation*}
        \max \left\{\eps_1, 4\left(\eps_1+\frac{\eps_1^2}{2}+\frac{2(\eps_1+\frac{\eps_1^2}{2})}{1-\rho+\eps_1+\frac{\eps_1^2}{2}}\right)\right\}<\eps_0.
    \end{equation*}
Take $r>0$ and $0<\eps_2<\frac{\eps}{3}$ so that $3\eps_2+r<\eps_1+\frac{\eps_1^2}{2}$. Let $\eta(\eps)=\frac{\eps_1^2}{2}.$

Now, let $T\in \mathcal{I}(X, Y)$ be a $\Gamma$-flat operator and $x_0\in S_X$ such that $\|T\|=1$ and $\|Tx_0\|>1-\eta(\eps)$. By Lemma \ref{lem: basic lemma Gamma flat}, there exist
\begin{itemize}
    \item[i)] A $w^*$-open set $U_r\subseteq Y^*$ such that $U_r\cap \Gamma \neq \emptyset$
    \item[ii)] $x_r^*\in \smo(X^*)\cap S_{X^*}$ and $u_r\in B_X$ such that $|x_r^*(u_r)|=1$, $\|T^*z^*-x_r^*\|<r+\eps_1+\frac{\eps_1^2}{2}$ and $\|u_r-x_0\|<\eps_1$ for every $z^*\in U_r\cap \Gamma.$
\end{itemize}

On the other hand, as $Y$ has ACK$_\rho$ structure, we can obtain $V\subseteq U_r\cap \Gamma, y_1^*\in V, e\in S_Y, F\in \ELL(Y, Y)$ satisfying properties (a)-(f) in the definition of ACK$_\rho$ structure. Define $S\in \ELL(X, Y)$ by
\begin{equation*}
    Sx :=x_r^*(x)F(e)+(1-\delta)(Id_Y-F)T(x), x\in X,
\end{equation*}
     where $\delta\in (\eps_2, 1)$ is chosen so that $\|S\|\leq 1$ (see \cite[Lemma~3.5]{CGKS2018}). Clearly, $S\in \mathcal{I}(X, Y)$. Using that $y_1^*(Fe)=1$ and $F^*y_1^*=y_1^*$ we have, for every $x\in X,$
     \begin{equation*}
         S^*y_1^*(x)=x_r^*(x)+(1-\delta)y_1^*(Tx)-(1-\delta)y_1^*(Tx)=x_r^*(x),
     \end{equation*}
so $S^*y_1^*=x_r^*\in \smo(X^*)$ and
\begin{equation*}
    1=|x_r^*(u_r)|=|y_1^*(Su_r)|\leq \|Su_r\|\leq 1.
\end{equation*}
Arguing as in \cite[Lemma~3.5]{CGKS2018}, we have that $\|S-T\|<\frac{\eps}{2}.$ On the other hand, by Lemma \ref{lem:smo} we can find $R\in \RSE(X, Y)\cap \mathcal I(X, Y)$ such that $\|R\|=1=\|R(u_r)\|$ and $\|S-R\|<\frac{\eps}{2}$, so $\|T-R\|<\eps.$ As $\|u_r-x_0\|<\eps_1<\eps$ we have finished. For the last statement, notice that the function $\eta(\eps)$ defined above only depends on the constant $\rho$ of the ACK$_\rho$ structure of $Y$ in virtue of Remark \ref{rem: uniform modulus P-Pw}.
\end{proof}

We shall collect some of the consequences of Theorems \ref{teo:ACK ASE} and \ref{teo:ACK RSE}.

\begin{cor}
    Let $X, Y$ be Banach spaces and suppose that $Y$ has ACK$_\rho$ structure. Then,
    \begin{enumerate}
        \item If $X$ has property [P$_w$], then the subspaces $\FR(X, Y)$ and $\KP(X, Y)$ have the BPBp-RSE.
        \item If $X$ has property [P], then the subspaces $\FR(X, Y), \KP(X, Y)$ and $\mathcal{W}(X, Y)$ have the BPBp-ASE.
    \end{enumerate}
\end{cor}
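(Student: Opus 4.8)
The plan is to obtain both assertions by specialising the two ACK$_\rho$ transfer theorems, Theorem~\ref{teo:ACK ASE} and Theorem~\ref{teo:ACK RSE}, to the concrete ideals $\FR$, $\KP$ and $\mathcal{W}$, using the $1$-norming set $\Gamma\subseteq B_{Y^*}$ supplied by the ACK$_\rho$ structure of $Y$. Since both theorems are already available, the only genuine work is the bookkeeping of which ideal meets which set of hypotheses.

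First I would record that $\FR(X,Y)$, $\KP(X,Y)$ and $\mathcal{W}(X,Y)$ are $\Gamma$-flat ideals for \emph{every} choice of $\Gamma\subseteq Y^*$, in particular for the distinguished $\Gamma$ coming from the ACK$_\rho$ structure. Indeed, by the chain $\FR\subseteq\KP\subseteq\mathcal{W}\subseteq\mathcal{A}$ together with the fact recorded from \cite[Example~A]{CGKS2018} that $\mathcal{A}(X,Y)$ is a $\Gamma$-flat ideal for every $\Gamma$, every element of each of these ideals is a $\Gamma$-flat operator; moreover each ideal is a linear subspace containing all finite-rank operators and is stable under left composition with operators in $\ELL(Y,Y)$, which are precisely the remaining requirements in the definition of a $\Gamma$-flat ideal. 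Hence each is a $\Gamma$-flat ideal contained in $\Fl_\Gamma(X,Y)$.

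With this in hand, part~(b) is immediate: Theorem~\ref{teo:ACK ASE} requires only that $\mathcal{I}(X,Y)\subseteq\Fl_\Gamma(X,Y)$ be a $\Gamma$-flat ideal and that $X$ have property~[P], so applying it in turn to $\mathcal{I}=\FR$, $\KP$ and $\mathcal{W}$ yields the BPBp-ASE for all three. For part~(a) the situation is slightly more delicate, because Theorem~\ref{teo:ACK RSE} carries the additional hypothesis that the ideal be contained in $\DP(X,Y)$. Here I would invoke the inclusions $\FR\subseteq\KP\subseteq\DP$ recorded at the start of Section~\ref{sec:subspaces of operators}: they guarantee $\FR(X,Y),\,\KP(X,Y)\subseteq\DP(X,Y)$, so both ideals satisfy every hypothesis of Theorem~\ref{teo:ACK RSE}, and property~[P$_w$] of $X$ then delivers the BPBp-RSE.

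The single point to flag — and the reason the two parts are not symmetric — is exactly this Dunford--Pettis hypothesis; it is the only obstacle, and it explains why $\mathcal{W}$ appears in~(b) but not in~(a). A weakly compact operator need not be Dunford--Pettis, so the inclusion $\mathcal{W}(X,Y)\subseteq\DP(X,Y)$ may fail and Theorem~\ref{teo:ACK RSE} cannot be applied to $\mathcal{W}$; for $\FR$ and $\KP$ no such issue arises, which is precisely what the statement claims.
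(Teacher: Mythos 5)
Your proposal is correct and follows essentially the same route the paper intends: the corollary is obtained by specialising Theorems \ref{teo:ACK ASE} and \ref{teo:ACK RSE} to $\FR$, $\KP$ and $\mathcal W$, using that these are $\Gamma$-flat ideals for every $\Gamma$ (via the inclusion into Asplund operators and \cite[Example~A]{CGKS2018}) and that $\FR\subseteq\KP\subseteq\DP$ supplies the extra Dunford--Pettis hypothesis needed for the RSE version. Your remark that the failure of $\mathcal W\subseteq\DP$ is exactly what excludes weakly compact operators from part~(a) correctly identifies the asymmetry between the two items.
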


These results yield another interesting consequence for compact and finite rank operators when the range is isometric to $L_1$.

\begin{cor}
    Let $X$ be a Banach space satisfying property [P$_w$] and $Y$ an $L_1$-predual. Then, for every $\eps>0$ there is $\eta(\eps)>0$ such that if $T\in S_{\KP(X, Y)}$ and $x_0\in S_X$ are such that $\|Tx_0\|>1-\eta(\eps)$ then there exist $S\in\RSE(X, Y)\cap\FR(X, Y)$ and $x\in S_X$ such that 
    \begin{equation*}
        \|Sx\|=\|S\|=1, \quad \|x-x_0\|<\eps \text{ and } \|S-T\|<\eps.
    \end{equation*}
    Moreover, the same $\eta(\eps)$ works for every $X$ with property [P$_w$]. In particular, the subspaces$\KP(X, Y)$ and $\FR(X, Y)$ have the BPBp-RSE.
\end{cor}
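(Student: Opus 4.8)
The plan is to reduce the problem to the case of a range isometric to $\ell_\infty^n$, where the conclusion is already available, and then to transport the resulting operator back into $Y$ through an \emph{isometric} embedding, so that norm, norm-attainment and the $\RSE$ property are all preserved exactly. Concretely, since every $\ell_\infty^n$ has property $\beta$ with constant $\rho=0$, hence $\mathrm{ACK}_0$ structure, Theorem \ref{teo:ACK RSE} guarantees that the pair $(X,\ell_\infty^n)$ has the BPBp-RSE for finite-rank operators with a modulus $\eta_0$ that, by Remark \ref{rem: uniform modulus P-Pw}, depends only on $\rho=0$ and not on $n$ nor on the particular space $X$ with property [P$_w$]. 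I would therefore fix $\eps>0$, choose auxiliary parameters $\eps',\eps_1>0$, and set $\eta(\eps):=\eta_0(\eps')$; this already explains why the same $\eta$ works for every domain with property [P$_w$].

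First I would exploit compactness to pass to a separable situation: if $T\in S_{\KP(X,Y)}$, then $T(X)$ is separable, so $\overline{\mathrm{span}}\,T(X)$ is contained in a separable $L_1$-predual subspace $Z\subseteq Y$ (separable subspaces of $L_1$-preduals sit inside separable $\mathcal{L}_{\infty,1+}$ subspaces). Since $Z$ is isometrically embedded in $Y$, any operator into $Z$ that is norm-one, attains its norm and is $\RSE$ retains all these properties when regarded as an operator into $Y$; thus it suffices to work inside $Z$. By the Lazar--Lindenstrauss representation, $Z=\overline{\bigcup_k F_k}$ with each $F_k$ isometric to some $\ell_\infty^{d_k}$, and because $\ell_\infty^{d_k}$ is $1$-injective there is a norm-one projection $P_k\colon Z\to F_k$. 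As $\overline{T(B_X)}$ is compact and $\bigcup_k F_k$ is dense, I would pick $N$ with $\|P_N T-T\|<2\eps_1$, so that $T_1:=P_N T$ is finite-rank with range in $F_N$ and $\|T_1 x_0\|>1-\eta(\eps)-2\eps_1$.

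Next, letting $\psi\colon F_N\to\ell_\infty^{d_N}$ be the isometry, I would normalise $\widehat T:=\psi T_1/\|\psi T_1\|$ and apply the BPBp-RSE of $(X,\ell_\infty^{d_N})$ (with modulus $\eta_0$) to obtain $\widetilde S\in\RSE(X,\ell_\infty^{d_N})$, finite-rank, with $\|\widetilde S\|=\|\widetilde S x\|=1$, $\|x-x_0\|<\eps'$ and $\|\widetilde S-\widehat T\|<\eps'$. Pulling back, $S:=\psi^{-1}\widetilde S\colon X\to F_N\subseteq Y$ is finite-rank and, since $\psi^{-1}$ is an isometry, $\|Sz\|=\|\widetilde S z\|$ for every $z$; hence $\|S\|=\|Sx\|=1$ exactly, and a sequence maximising $\|S\cdot\|$ maximises $\|\widetilde S\cdot\|$, so the $\RSE$ property of $\widetilde S$ transfers verbatim to give $S\in\RSE(X,Y)$. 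A routine triangle-inequality bookkeeping using $\psi^{-1}\psi T_1=T_1=P_N T$ yields $\|S-T\|<\eps$ once $\eps',\eps_1$ are small, while $\|x-x_0\|<\eps'<\eps$. Starting from a compact (resp.\ finite-rank) $T$ we produced a finite-rank $S$, which gives the BPBp-RSE for both $\KP(X,Y)$ and $\FR(X,Y)$.

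The hard part is obtaining an \emph{exact} isometric copy of $\ell_\infty^n$ near the image of $T$, rather than merely a $(1+\eps)$-isomorphic one. The $\mathcal{L}_{\infty,1+}$ structure of a general $L_1$-predual only gives near-isometric copies, and a near-isometry $j$ distorts norms: then $S=j\widetilde S$ need not attain its norm exactly (one only gets $\|Sx\|\ge\frac{\|S\|}{1+\eps}$), so even though strong exposedness of $\widetilde S x$ is preserved under the isomorphism $j$ (isomorphisms preserve strongly exposed points), Proposition \ref{prop: aproximate NA by RSE or ASE} could not be invoked and the $\RSE$ property would fail to transfer cleanly. The reduction to a \emph{separable} $L_1$-predual, where the Lazar--Lindenstrauss theorem supplies genuinely isometric $\ell_\infty^{d_k}$'s with dense union, is precisely what removes this distortion; this is the one structural input beyond the ACK machinery of Theorem \ref{teo:ACK RSE}, and it mirrors the scheme of \cite[Theorem~4.2]{ABCCKLLM014}.
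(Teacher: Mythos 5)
Your proposal is correct and follows the same skeleton as the paper's proof: both reduce to the pair $(X,\ell_\infty^m)$, whose BPBp-RSE with a modulus depending only on $\rho=0$ comes from Theorem \ref{teo:ACK RSE} and Remark \ref{rem: uniform modulus P-Pw}, then approximate $T$ by $P\circ T$ for a norm-one projection $P$ onto an isometric copy $E$ of $\ell_\infty^m$ inside $Y$, and finally transfer the resulting operator back through the isometric inclusion $E\subseteq Y$, under which norm-attainment and the $\RSE$ property pass verbatim. The one place where you diverge is the source of $E$ and $P$: the paper simply invokes the argument of \cite[Lemma~3.4]{JW1979}, which produces, for any compact operator into an $L_1$-predual, an isometric $\ell_\infty^m$ subspace and a norm-one projection with $\|T-PT\|$ small, whereas you re-derive this step by passing to a separable $L_1$-predual subspace containing $\overline{\spann}\,T(X)$, applying the Lazar--Lindenstrauss representation to get an increasing dense chain of isometric $\ell_\infty^{d_k}$'s, and using $1$-injectivity of $\ell_\infty^{d_k}$ for the projection; this is a legitimate self-contained substitute, at the cost of importing the separable-saturation fact for $\mathcal{L}_{\infty,1+}$-spaces and Lazar--Lindenstrauss instead of one lemma of Johnson--Wolfe. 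The only slip is setting $\eta(\eps):=\eta_0(\eps')$ outright: after losing $O(\eps_1)$ in replacing $T$ by $P_NT$ you need $\|\widehat Tx_0\|>1-\eta_0(\eps')$, so $\eta(\eps)$ must be taken with a margin below $\eta_0(\eps')$ (the paper uses $\eta'(\eps/2)/2$); this is absorbed by the bookkeeping you defer and does not affect the uniformity of $\eta$ in $X$.
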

The proof is based on the one of \cite[Lemma~3.4]{JW1979} and follows an analogous argument to \cite[Theorem~4.2]{ABCCKLLM014}.
\begin{proof}
    For every $n\in \N$, the space $\ell_\infty^n$ has ACK$_\rho$ structure with $\rho=0$ and the corresponding 1-norming set $\Gamma=\{e_i^*:i=1, \dots, n\}$ and every operator in $\ELL(X, Y)$  is $\Gamma$-flat, so by Theorem \ref{teo:ACK RSE} there is $\eta'$ such that $(X, \ell_\infty^n)$ satisfies the BPBp-RSE with the same function $\eta'$ for all $n\in \N.$
    
    Fix $\eps>0$ and let $T\in \KP(X, Y)$ with $\|T\|=1$ and $x_0\in S_X$ such that $\|Tx_0\|>1-\frac{\eta'(\eps/2)}{2}.$ Arguing like in \cite[Lemma~3.4]{JW1979}, there exists a subspace $E$ of $Y$ which is isometric to $\ell_\infty^m$ for some $m\in \N$ and a projection $P:Y\longrightarrow E\subseteq Y$ with $\|P\|=1$ such that $\|T-P\circ T\|<\min \{\frac{\eta'(\eps/2)}{2}, \frac{\eps}{4}\}.$ Then, 
    \begin{equation*}
    \begin{split}
        \|PT(x_0)\|&=\|Tx_0-Tx_0+PT(x_0)\|\\
        &\geq \|Tx_0\|-\|Tx_0-PT(x_0)\|\\
        &>1-\frac{\eta'(\eps/2)}{2}-\frac{\eta'(\eps/2)}{2}\\
        &=1-\eta'(\eps/2).
    \end{split} 
    \end{equation*}
    Therefore, the operator $R=\frac{PT}{\|PT\|}$ satisfies
    \begin{equation*}
        \|Rx_0\|\geq \|PTx_0\|>1-\eta'(\eps/2).
    \end{equation*}
    As the pair $(X, E)$ has the BPBp-RSE with the function $\eta'$, there exist $S\in\RSE(X, E)$ and $x\in S_X$ such that 
    \begin{equation*}
        \|Sx\|=\|S\|=1, \quad \|R-S\|<\frac{\eps}{2} \text{ and } \|x-x_0\|<\frac{\eps}{2}.
    \end{equation*}
    $S\in\RSE(X, E)$ can be viewed as a finite rank operator $S:X\longrightarrow Y$ and
    \begin{equation*}
    \begin{split}
        \|T-S\|&\leq \|T-PT\|+\|PT-R\|+\|R-S\|\\
        &<\frac{\eps}{4}+1-\|PT\|+\frac{\eps}{2}\\
        &<\frac{\eps}{4}+\frac{\eps}{4}+\frac{\eps}{2}=\eps. \qedhere
    \end{split}
    \end{equation*}
\end{proof}
\begin{rem}
    The previous result is also valid for the BPBp-ASE assuming that $X$ has property [P] and using Theorem \ref{teo:ACK ASE}.
\end{rem}

\section{Further discussion and open questions}\label{sec:discussion}

We can define the notions of universality of the domain and range for the BPBp-RSE and the BPBp-ASE in an analogous way as it was done in \cite{ACKLM2015} for the classical Bishop-Phelps-Bollobás property.
\begin{defi}
    Let $X$ and $Y$ be Banach spaces. 
    \begin{enumerate}
        \item We say that $X$ is a universal BPBp-RSE  domain space (resp., BPBp-ASE) if for every Banach space $Z$, the pair $(X, Z)$ has the BPBp-RSE (resp., BPBp-ASE).
       \item  We say that $Y$ is a universal BPBp-RSE  range space (resp., BPBp-ASE) if for every Banach space $Z$, the pair $(Z, Y)$ has the BPBp-RSE (resp., BPBp-ASE).
    \end{enumerate}
\end{defi}

We know from Proposition \ref{prop:property P is necessary} that there is no universal BPBp-ASE range space and that property [P] is a necessary condition for being a universal BPBp-ASE domain. However, this is the only necessary condition we are able to get.
\begin{question}
    Are there more necessary conditions for a Banach space $X$ to be a universal BPBp-ASE domain? In particular, does this imply that $X$ is uniformly convex?
\end{question}

Regarding to the BPBp-RSE, we can derive that the denseness of $\str(B_X)$ is a necessary condition for being a universal domain also in this case. For this, we take advantage of the following fact.
\begin{prop}[\mbox{\cite[Remark~2.4]{CDFJM2025}}]
    Let $X, Y$ be Banach spaces and $T\in \ELL(X, Y)$ a monomorphism. Then, $T\in \RSE(X, Y)$ if and only if $T\in \ASE(X, Y).$
\end{prop}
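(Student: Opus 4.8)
The plan is to prove the two inclusions separately, noting that one direction is completely general while the other is exactly where the monomorphism hypothesis enters.

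First I would observe that $\ASE(X,Y) \subseteq \RSE(X,Y)$ holds for \emph{every} $T \in \ELL(X,Y)$, with no assumption on $T$. Indeed, if $T \in \ASE(X,Y)$ with exposing point $x_0 \in S_X$ and $(x_n) \subseteq B_X$ satisfies $\|Tx_n\| \to \|T\|$, then by definition some subsequence $x_{\sigma(n)} \to \lambda x_0$ for a suitable $\lambda \in \T$; applying the (continuous) operator $T$ gives $Tx_{\sigma(n)} \to \lambda Tx_0$, which is precisely the $\RSE$ condition at the same point $x_0$. Thus the content of the proposition lies entirely in the reverse implication.

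For $\RSE \Rightarrow \ASE$, I would use that a monomorphism is \emph{bounded below}: there is a constant $c > 0$ such that $\|Tx\| \geq c\|x\|$ for all $x \in X$ (equivalently, $T$ is an isomorphism onto its closed range, which follows from the open mapping theorem once $T$ is injective with closed range). Suppose $T \in \RSE(X,Y)$ with exposing point $x_0$, and take any $(x_n) \subseteq B_X$ with $\|Tx_n\| \to \|T\|$. The $\RSE$ property yields a subsequence $(x_{\sigma(n)})$ and $\lambda \in \T$ with $Tx_{\sigma(n)} \to \lambda Tx_0 = T(\lambda x_0)$. By linearity and the lower bound,
$$c\,\|x_{\sigma(n)} - \lambda x_0\| \leq \|T(x_{\sigma(n)} - \lambda x_0)\| = \|Tx_{\sigma(n)} - \lambda Tx_0\| \longrightarrow 0,$$
so $x_{\sigma(n)} \to \lambda x_0$ in norm. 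This is exactly the $\ASE$ condition at the same point $x_0$, hence $T \in \ASE(X,Y)$.

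I do not anticipate a genuine obstacle here: the whole argument rests on transporting norm convergence from the range back to the domain, which the bounded-below property does automatically, so the same exposing point $x_0$ works for both notions. The only point requiring care is to confirm that ``monomorphism'' is being used in the sense of an isomorphic embedding (i.e.\ bounded below) rather than mere injectivity; if one only had injectivity without a uniform lower norm bound, convergence of $Tx_{\sigma(n)}$ would not force convergence of $x_{\sigma(n)}$, and the implication could fail.
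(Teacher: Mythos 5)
Your argument is correct. Note that the paper itself gives no proof of this proposition --- it is quoted from \cite[Remark~2.4]{CDFJM2025} --- but the two-step argument you give (continuity of $T$ for the inclusion $\ASE\subseteq\RSE$, and the lower bound $\|Tx\|\geq c\|x\|$ of a monomorphism to pull norm convergence in the range back to the domain for the converse) is the standard and surely intended one, and your closing caveat that ``monomorphism'' must mean an isomorphic embedding rather than a merely injective operator is exactly the right point to flag.
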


\begin{cor}
    If $X$ is a Banach space such that $(X, X)$ has the BPBp-RSE, then $\str(B_X)$ is dense in $S_X.$ In particular, this is a necessary condition for being a universal BPBp-RSE domain (or range space).
\end{cor}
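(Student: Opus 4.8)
The plan is to feed the identity operator into the BPBp-RSE of the pair $(X,X)$, taking advantage of the fact that $\id_X$ attains its norm at \emph{every} point of $S_X$, and then to upgrade the range strongly exposing operator it produces into an absolutely strongly exposing one by observing that this operator is forced to be a monomorphism. Concretely, I would fix an arbitrary $x\in S_X$ and a number $0<\eps<1$, and apply the hypothesis to $T=\id_X$, which has norm one and satisfies $\|\id_X(x)\|=\|x\|=1>1-\eta(\eps)$. This yields an operator $S\in\RSE(X,X)$ and a point $x_0\in S_X$ with $\|S\|=\|Sx_0\|=1$, $\|S-\id_X\|<\eps$ and $\|x-x_0\|<\eps$.

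The key step is to notice that proximity to the identity makes $S$ bounded below: since $\|S-\id_X\|<\eps<1$, for every $z\in X$ one has $\|Sz\|\geq\|z\|-\|(S-\id_X)z\|\geq(1-\eps)\|z\|$, so $S$ is a monomorphism. By \cite[Remark~2.4]{CDFJM2025}, a range strongly exposing monomorphism is automatically absolutely strongly exposing, hence $S\in\ASE(X,X)$. As $S$ attains its norm at $x_0$, Lemma \ref{lem:str-exp}(a) then guarantees that $x_0\in\str(B_X)$. Because $\|x-x_0\|<\eps$ and $\eps\in(0,1)$ was arbitrary, every point of $S_X$ is approximated by strongly exposed points, i.e. $\str(B_X)$ is dense in $S_X$. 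For the final assertion, a universal BPBp-RSE domain (respectively, range) space $X$ satisfies in particular that $(X,X)$ has the BPBp-RSE, so the conclusion follows from what was just shown.

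I do not expect a serious obstacle in this argument; its entire content reduces to the single structural observation that any operator within distance strictly less than $1$ of the identity is bounded below, which is exactly the hypothesis that lets the cited proposition convert the \emph{a priori} weaker RSE property into the ASE property needed to invoke the strong-exposition conclusion of Lemma \ref{lem:str-exp}(a). The only point demanding minor care is to restrict to $\eps<1$ so that this monomorphism estimate is available, which is harmless since density is a statement about arbitrarily small neighbourhoods.
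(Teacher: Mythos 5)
Your proposal is correct and follows essentially the same route as the paper: apply the BPBp-RSE to $T=\id_X$, observe that the resulting $S$ is close enough to the identity to be injective with closed range, invoke \cite[Remark~2.4]{CDFJM2025} to upgrade $S$ from $\RSE$ to $\ASE$, and conclude via Lemma~\ref{lem:str-exp}(a). The only (cosmetic) difference is that the paper argues through openness of the set of isomorphisms for $\eps$ small, whereas you give the explicit lower bound $\|Sz\|\geq(1-\eps)\|z\|$, which directly yields the monomorphism hypothesis that the cited remark actually requires.
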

\begin{proof}
    Let $x_0\in S_X$ and $\eps>0$ and take $T=Id: X\longrightarrow X.$ Then, $1=\|T\|=\|x_0\|=\|Tx_0\|>1-\eta(\eps)$. Therefore, there exist $S\in \RSE(X, Y), x_1\in S_X$ such that $$\|Sx_1\|=1=\|S\|, \quad \|S-T\|<\eps \text{ and } \|x_1-x_0\|<\eps.$$
    As the set of isomorphisms from $X$ to $X$ is open, taking $\eps$ sufficiently small we can assure that $S$ is an isomorphism and therefore $S\in\ASE(X, X),$ so $x_1\in \str(B_X)$ as desipurple.
\end{proof}

\begin{question}
    If $X$ is a universal domain for BPBp-RSE, does this imply that $X$ is a universal domain for BPBp-ASE?
\end{question}

In what comes to the range spaces, the only universal BPBp-RSE range space we know is the field of scalars, $\K$, and no infinite dimensional Banach space $Y$ can be a universal BPBp-RSE range as, in such case, there exists a Banach space $X$ for which $\RSE(X, Y)$ is not dense in $\ELL(X, Y)$ (see \cite[Theorem 3.1]{CDFJM2025}). Moreover, Theorem \ref{teo:L_1 char} yields the following.

\begin{cor}
    Let $Y$ be a Banach space. If $Y$ is a universal BPBp-RSE range space, then $Y$ is uniformly convex and finite dimensional.
\end{cor}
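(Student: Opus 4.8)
The plan is to establish the two conclusions separately, each by specialising the domain space $Z$ to a convenient choice and invoking a result already at our disposal. Since $Y$ is assumed to be a universal BPBp-RSE range space, the pair $(Z,Y)$ has the BPBp-RSE for \emph{every} Banach space $Z$; the whole argument consists in testing this hypothesis against two particular domains, one tailored to force uniform convexity and one tailored to force finite dimensionality.

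For uniform convexity I would take $Z=\ell_1$. By universality the pair $(\ell_1,Y)$ has the BPBp-RSE, and Theorem~\ref{teo:L_1 char} (the equivalence of items ii) and iv)) says precisely that this forces $Y$ to be uniformly convex. This half is therefore immediate, with no computation required beyond the specialisation.

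For finite dimensionality I would argue by contradiction. Suppose $Y$ is infinite dimensional. By \cite[Theorem~3.1]{CDFJM2025} there exists a Banach space $X$ for which $\RSE(X,Y)$ fails to be dense in $\ELL(X,Y)$. On the other hand, as observed in the remark following Definition~\ref{defi:BPBp-RSE}, the BPBp-RSE of a pair always implies the denseness of the corresponding RSE operators. Hence $(X,Y)$ cannot have the BPBp-RSE, contradicting the assumed universality of $Y$ as a range space. Consequently $Y$ must be finite dimensional.

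There is no genuine obstacle here: the statement is a direct corollary of the two cited facts, and the only point requiring (minor) care is the bridge in the second part, namely that denseness of $\RSE(X,Y)$ is a real consequence of the BPBp-RSE, so that the obstruction produced by \cite[Theorem~3.1]{CDFJM2025} truly rules out universality. Combining the two halves yields that $Y$ is simultaneously uniformly convex and finite dimensional, as claimed.
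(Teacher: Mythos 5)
Your proposal is correct and matches the paper's argument: the paper also derives uniform convexity from Theorem \ref{teo:L_1 char} applied to the pair $(\ell_1,Y)$ (or $(L_1(\mu),Y)$), and rules out infinite-dimensionality via \cite[Theorem 3.1]{CDFJM2025} together with the observation that the BPBp-RSE implies denseness of $\RSE(X,Y)$. No gaps.
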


Observe that it is an old open problem wether every finite dimensional Banach space satisfies property B of Lindenstrauss (i.e., $\NA(X, Y)$ is dense in $\ELL(X, Y)$ for every $X$), and that this is not known even for the case $Y=\ell_2^2.$

It is worth pointing out a consequence of Lemma \ref{prop:stability-K} which can be proved using the same argument as in \cite[Corollary~2.2]{ACKLM2015}.

\begin{cor}
    Let $X$ and $Y$ be Banach spaces.
    \begin{itemize}
        \item If $X$ is a universal BPBp-RSE domain, there is a function $\eta_{X}\colon (0,1)\longrightarrow \R^+$ such that for every Banach space $Y$, the pair $(X, Y)$ has the BPBp-RSE with function $\eta_X$.
        \item If $X$ is a universal BPBp-RSE range, there is a function $\eta_{Y}\colon (0,1)\longrightarrow \R^+$ such that for every Banach space $X$, the pair $(X, Y)$ has the BPBp-RSE with function $\eta_Y$.
    \end{itemize}  
\end{cor}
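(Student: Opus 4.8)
The plan is to prove both statements by contradiction, the entire weight of the argument resting on the stability Lemma \ref{prop:stability-K}, exactly as in \cite[Corollary~2.2]{ACKLM2015}. The two bullets are symmetric, so I will describe the domain case in detail and indicate the dual modifications for the range case.

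Suppose $X$ is a universal BPBp-RSE domain. For each $\eps\in(0,1)$ and each Banach space $Y$, I would let $\eta_{X,Y}(\eps)$ denote the supremum of all $\eta\in(0,\eps)$ for which the BPBp-RSE condition of $(X,Y)$ holds at level $\eps$ with modulus $\eta$; since $(X,Y)$ has the BPBp-RSE, we have $\eta_{X,Y}(\eps)>0$ for every $Y$. A routine check shows that the set of admissible moduli is a down-closed interval (if $\|Tx\|>1-\eta^\ast$ then already $\|Tx\|>1-\eta$ for some smaller $\eta$ that is admissible, producing the required approximation), so $\eta_{X,Y}(\eps)$ is itself admissible. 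I would then set
\[
    \eta_X(\eps):=\inf_Y \eta_{X,Y}(\eps),
\]
the infimum taken over all Banach spaces $Y$. By down-closedness, $\eta_X\le\eta_{X,Y}$ is admissible for every $Y$, so once I prove $\eta_X(\eps)>0$ for all $\eps$, this $\eta_X$ is the desired uniform modulus.

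To prove positivity, suppose $\eta_X(\eps_0)=0$ for some $\eps_0$. Then there is a sequence $(Y_n)$ of Banach spaces with $\eta_{X,Y_n}(\eps_0)<\tfrac1n$; in particular, for each $n$ the pair $(X,Y_n)$ fails the BPBp-RSE at level $\eps_0$ with modulus $\tfrac1n$, so one can pick $T_n\in S_{\ELL(X,Y_n)}$ and $x_n\in S_X$ with $\|T_n x_n\|>1-\tfrac1n$ admitting no $S\in\RSE(X,Y_n)$ and $x_0\in S_X$ satisfying $\|S\|=\|Sx_0\|=1$, $\|S-T_n\|<\eps_0$ and $\|x_n-x_0\|<\eps_0$. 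Now form $Y:=\bigl[\bigoplus_n Y_n\bigr]_{c_0}$. Since $X$ is a universal domain, $(X,Y)$ has the BPBp-RSE with some function $\eta$, and viewing $X$ as the (degenerate, one-term) $\ell_1$-sum of a single space and $Y$ as the given $c_0$-sum, Lemma \ref{prop:stability-K} yields that \emph{every} pair $(X,Y_n)$ has the BPBp-RSE with the same function $\eta$. Choosing $n$ so large that $\tfrac1n<\eta(\eps_0)$ gives $\|T_n x_n\|>1-\eta(\eps_0)$, whence a valid approximation $(S,x_0)$ for $(X,Y_n)$ must exist, contradicting the choice of $T_n,x_n$. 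Hence $\eta_X(\eps)>0$ for all $\eps$.

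For the range case one argues identically with the roles of domain and range exchanged: for a universal range $Y$, set $\eta_Y(\eps)=\inf_X\eta_{X,Y}(\eps)$, and if this vanishes at some $\eps_0$ collect domains $(X_n)$ witnessing the failures and take $X:=\bigl[\bigoplus_n X_n\bigr]_{\ell_1}$; Lemma \ref{prop:stability-K} (now with $X$ an $\ell_1$-sum and $Y$ a single-space $c_0$-sum, which is again Case~1 of its proof) transfers the modulus of $(X,Y)$ to each $(X_n,Y)$ and produces the same contradiction. The one point requiring care is the bookkeeping in the contradiction step: one must check that the stability lemma genuinely applies with a single space treated as a one-term absolute sum, and — crucially — that the transferred modulus $\eta$ is one and the same for all $n$, so that a single choice of $n$ defeats the entire prescribed list of failures. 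This is exactly where the fact that Lemma \ref{prop:stability-K} preserves the same function $\eta$ from the sum down to each summand, rather than merely some summand-dependent modulus, is indispensable.
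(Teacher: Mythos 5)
Your proposal is correct and follows essentially the same route as the paper, which proves this corollary exactly as in \cite[Corollary~2.2]{ACKLM2015}: assume no uniform modulus exists, collect a sequence of witnessing spaces $(Y_n)$ (resp.\ $(X_n)$), pass to their $c_0$-sum (resp.\ $\ell_1$-sum), and invoke the stability Lemma \ref{prop:stability-K} to transfer the single modulus of the sum back to every summand, yielding a contradiction. The extra care you take with the supremum/infimum formalities and with the one-term-sum reading of the stability lemma is sound but not a departure from the paper's argument.
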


We conclude the paper with some more open questions about the topic of the paper. The first one is naturally motivated by the results in Sections \ref{sec: M summand complex} and \ref{sec: M summand real}. Indeed, when $K$ is connected, $C(K)$ does not have any $M$-summands and therefore Theorems \ref{prop:M summand} and \ref{prop:l infty sum real} cannot be applied to the pair $(C(K), Y).$
\begin{question}
    If the pair $(C[0,1], Y)$ has the BPBp-RSE in the complex case, does this imply that $Y$ is $\C$-uniformly convex? In the real case, does this imply that $Y$ is uniformly convex?
\end{question}

All along the paper we have focused mainly on the BPBp-RSE. However, we think that it may be of interest to study the BPBp-ASE itself. The first natural question is whether property [P] of the domain can be enough to get the BPBp-ASE under the assumption that the pair $(X, Y)$ has the BPBp. In other words,

\begin{question}
Is the BPBp-ASE of a pair $(X, Y)$ equivalent to property [P] of the domain together with the classical BPBp of $(X, Y)$?
\end{question}

Another natural question is motivated by the fact that in all the cases in which we know that a pair $(X, Y)$ satisfies the BPBp-RSE, this property is also satisfied by the subspace $\KP(X, Y)$, and we are not aware of any example in which the compact operators between $X$ and $Y$ have the BPBp-RSE but the pair $(X, Y)$ does not have it. Let us point out that it is known that the classical BPBp for compact operators does not imply the BPBp in general (for instance, the pair $(L_1[0,1], C[0,1])$ satisfies the BPBp for compact operators (see \cite[R2, p. 380]{CGK2013}) but $\NA(L_1[0,1], C[0,1])$ is not dense in $\mathcal{L}(L_1[0,1], C[0,1])$ as proved in \cite{S1983}) and, up to our knowledge, it is still an open question whether the BPBp implies the BPBp for compact operators.

\begin{question}
    Is the BPBp-RSE equivalent to the BPBp-RSE for compact operators?
\end{question}

We finish with an open problem about the classical BPBp. A positive answer to this question would yield automatically the BPBp-RSE in virtue of Corollary \ref{cor:strBPBp}.
\begin{question}
    Suppose that $X$ has finite dimension and $Y$ is uniformly convex. Does the pair $(X, Y)$ have the BPBp(-RSE)?
\end{question}

\section*{Acknowledgments}
The author wishes to express her gratitude to Sun Kwang Kim and Rubén Medina for suggesting the topic of this paper. She is also grateful to Mingu Jung and Óscar Roldán for providing valuable information and relevant references concerning specific parts of the work. This paper forms part of the author’s PhD thesis, supervised by Sheldon Dantas and Miguel Martín, to whom she extends her thanks for their continuous guidance and support throughout the writing process.

The author was partially supported by the grant PRE2022-103590 funded by MICIU/AEI/10.13039/501100011033 and by “ESF+”, by the grant PID2021-122126NB-C31
funded by MICIU/AEI/10.13039/501100011033 and ERDF/EU, and by ``Maria de Maeztu'' Excellence Unit IMAG, funded by MICIU/AEI/10.13039/501100011033 with reference CEX2020-001105-M.

\bibliographystyle{alpha}

\end{document}